\documentclass[11pt]{amsart}
\usepackage{fullpage}
\usepackage{amsmath,amsthm,amssymb}
\usepackage{colonequals}
\usepackage{enumerate}
\usepackage[dvipsnames]{xcolor}
\definecolor{citation}{rgb}{0,.40,.80}
\usepackage[colorlinks,linkcolor=citation,citecolor=citation,urlcolor=citation]{hyperref}
\usepackage{cleveref}
\usepackage{array,makecell,subcaption}

\usepackage{tikz}
\usetikzlibrary{cd,arrows,decorations.markings}

\tikzset{
  ->-/.style={decoration={markings, mark=at position 0.5 with {\arrow{stealth}}},
              postaction={decorate}},
          }
\tikzset{
  ->--/.style={decoration={markings, mark=at position 0.3 with {\arrow{stealth}}},
              postaction={decorate}},
          }
\setlength{\parskip}{0.5\baselineskip}

\def\qquad{\quad\quad}

\newcommand \C {\mathbb C}
\newcommand \bA {\mathbb A}
\newcommand{\bC}{\mathbb{C}}

\newcommand{\bP}{\mathbb{P}}

\newcommand{\bZ}{\mathbb{Z}}

\newcommand{\cD}{\mathcal{D}}

\newcommand{\cO}{\mathcal{O}}
\newcommand{\fm}{\mathfrak{m}}
\newcommand{\fn}{\mathfrak{n}}

\newcommand{\eps}{\varepsilon}
\newcommand{\gtw}{G_{12}}

\newcommand \Z {\mathbb Z}
\newcommand{\<}{\langle}
\renewcommand{\>}{\rangle}
\newcommand{\ep}{\epsilon}
\newcommand{\dbg}{D^G}
\def\refmain{\hyperlink{main}{Theorem A}}
\def\refcor{\hyperlink{cormain}{Corollary B}}

\let\phi\varphi

\DeclareMathOperator \can {can}

\DeclareMathOperator \End {End}
\DeclareMathOperator \GL {GL}
\DeclareMathOperator \HHilb {\textrm{$H$-Hilb}}
\newcommand{\hhilbc}{\HHilb(\C^2)}
\DeclareMathOperator \GHilb {\textrm{$G$-Hilb}}
\DeclareMathOperator \Hom {Hom}
\DeclareMathOperator \Hilb {Hilb}
\DeclareMathOperator \Ind {Ind}
\DeclareMathOperator \nat {nat}

\DeclareMathOperator \SL {SL}
\DeclareMathOperator \Spec {Spec}

\numberwithin{equation}{section}
\theoremstyle{plain}
\newtheorem{thm}[equation]{Theorem}
\newtheorem*{thma}{Theorem~A}
\newtheorem*{corb}{Corollary~B}
\newtheorem{prop}[equation]{Proposition}
\newtheorem{lem}[equation]{Lemma}
\newtheorem{cor}[equation]{Corollary}
\newtheorem{conj}[equation]{Conjecture}

\theoremstyle{definition}
\newtheorem{dfn}[equation]{Definition}

\theoremstyle{remark}
\newtheorem{rmk}[equation]{Remark}


\title{An explicit derived McKay correspondence for some complex reflection groups of rank two}

\author{Anirban Bhaduri}
\author{Yael Davidov}
\author{Eleonore Faber}
\author{Katrina Honigs}
\author{Peter McDonald}
\author{C.~Eric Overton-Walker}
\author{Dylan Spence}

\address{Department of Mathematics, University of South Carolina, Columbia, SC 29208, USA}
\address{Department of Mathematical Sciences,
University of Delaware, 501 Ewing Hall, 
Newark, DE 19716, USA}
\address{Institut f\"ur Mathematik und Wissenschaftliches Rechnen,
Universit\"at Graz,
Heinrichstr.~36,
A-8010 Graz, Austria and School of Mathematics, University of Leeds, LS2 9JT Leeds, UK }
\address{Department of Mathematics, Simon Fraser University, 8888 University Drive, Burnaby, British Columbia V5A 1S6, Canada}
\address{Department of Mathematics, Statistics, and CS, University of Illinois at Chicago, Chicago, IL 60607, USA}
\address{Department of Mathematical Sciences, 309 SCEN, University of Arkansas, Fayetteville, AR 72701}
\address{Department of Mathematics, University of Wisconsin-Whitewater, 237 N. Prince Street, Whitewater, WI 53190, USA}

\date{\today}
\allowdisplaybreaks
\begin{document}

\begin{abstract}
  In this paper, we explore the derived McKay correspondence for several reflection groups, namely reflection groups of rank two generated by reflections of order two.
  We prove that for each of the reflection groups  $G=G(2m,m,2)$, $G_{12}$, $G_{13}$, or $G_{22}$, there is a semiorthogonal decomposition of the following form, where
$B_1,\ldots,B_r$ are the normalizations of the irreducible components of the branch divisor $\C^2\to \C^2/G$ and 
  $E_1,\ldots,E_n$ are exceptional objects:
  \[
\dbg(\C^2)\cong   \langle
E_1,\ldots,E_n,D(B_1),\ldots, D(B_r), D(\C^2/G)
  \rangle.
\]
We verify that the pieces of this decomposition correspond to
the irreducible representations of $G$, verifying the Orbifold Semiorthogonal Decomposition Conjecture of Polishchuk and Van den Bergh.
Due to work of Potter  on the group $G(m,m,2)$, this conjecture is now proven for all finite groups $G\leq \GL(2,\bC)$ that are generated by order $2$ reflections.
Each of these groups contains, as a subgroup of index $2$, a distinct finite group $H\leq \SL(2,\bC)$.
A key part of our work is an explicit computation of the action of $G/H$ on the $H$-Hilbert scheme $\hhilbc$.
\end{abstract}  

\maketitle

\section{Introduction}\label{sec:intro}

Let $G$ be a finite subgroup of $\SL(2,\bC)$. The classical McKay correspondence connects the representation theory of $G$, the algebra of the ring of invariants $\bC[x,y]^G$, and the geometry of the exceptional divisor of the minimal resolution of singularities $\pi\colon Y\to \bC^2/G$.
Later, Kapranov and Vasserot \cite{Kapranov-Vasserot:2000}
showed that the McKay correspondence may be realized as the following equivalence between the bounded derived category
of $G$-equivariant coherent sheaves on $\bC^2$ and the derived category 
of coherent sheaves on $Y$:
$$\dbg(\bC^2)\simeq D(Y).$$

This result has also been extended to the case of small finite subgroups $G \leq \GL(2,\bC)$, that is, subgroups containing no pseudo-reflections (elements that fix a codimension one subspace), in \cite{Ishii:2002}. This paper is focused on a further extension of the derived McKay correspondence to some finite reflection groups $G\leq \GL(2,\bC)$. 
The Chevalley--Shephard--Todd Theorem \cite{Chevalley,Shephard_Todd}
tells us that in this case $\bC^2/G$ is smooth, so there is no singularity to resolve, making the geometric picture quite different from the classical case. Instead of a derived equivalence as above, there is an embedding of $D(\bC^2/G)$ into $\dbg(\bC^2)$, and moreover a semiorthogonal decomposition described by 
Polishchuk and Van den Bergh's Orbifold Semiorthogonal Decomposition Conjecture:

\begin{conj}[{{\cite{PVdB19}}}]\label{PVdB_conj}
  Suppose $G$ is a finite group acting effectively on a smooth variety $X$, and that for all $\lambda\in G$ the geometric quotient $\bar{X}^\lambda=X^\lambda/C(\lambda)$, where $C(\lambda)$ is the centralizer of $\lambda$ in $G$, is smooth.
  Then there is a semiorthogonal decomposition of $D^G(X)$ 
  whose components $C_{[\lambda]}$ are in bijection with conjugacy classes and $C_{[\lambda]}\cong D(\bar{X}^\lambda)$.
\end{conj}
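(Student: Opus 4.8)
We do not prove Conjecture~\ref{PVdB_conj} in general; rather, for each $G$ in our list we verify it by transporting the problem to $G/H$-equivariant geometry on a minimal resolution, in the spirit of Potter's treatment of the dihedral groups $G(m,m,2)$. Set $H\colonequals G\cap\SL(2,\C)$; this is a small subgroup of $\SL(2,\C)$ of index two in $G$, and we let $\sigma$ generate $G/H\cong\Z/2$. Since $H$ is small, $Y\colonequals\hhilbc$ is the minimal resolution of the ADE surface singularity $\C^2/H$, and the Fourier--Mukai transform along the universal subscheme gives the derived McKay equivalence $D^{H}(\C^2)\simeq D(Y)$ of \cite{Kapranov-Vasserot:2000}. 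Because $G$ normalizes $H$, it acts on $\hhilbc$ by $g\cdot[Z]=[g(Z)]$, this action factors through $G/H$, the universal subscheme is $G$-stable, and one checks that the equivalence is $G$-equivariant; taking $G/H$-invariants, it descends to
\[
  \dbg(\C^2)\ \simeq\ D^{G/H}(Y)\ =\ D\bigl([Y/\sigma]\bigr).
\]

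The central step, and the key computation announced in the abstract, is to describe the $\sigma$-action on $Y$ completely explicitly in each of the four families. For $G(2m,m,2)$ the group $H$ is binary dihedral and $\C^2/H$ is of type $D$, while for $G_{12},G_{13},G_{22}$ the group $H$ is the binary tetrahedral, octahedral, respectively icosahedral group, so $\C^2/H$ is of type $E_6$, $E_7$, $E_8$. Working through the affine charts of $\hhilbc$ (equivalently, through the McKay quiver description of $Y$), we determine how $\sigma$ permutes and acts on the exceptional curves: when the Dynkin diagram has a nontrivial automorphism (types $D$ and $E_6$) this can be realized by a diagram flip, whereas for $E_7$ and $E_8$, whose diagrams are rigid, $\sigma$ preserves each exceptional curve and acts on some of them by a nontrivial involution. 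From this we read off the fixed locus $Y^{\sigma}$: it is the union of finitely many isolated points with a collection of smooth rational curves, the latter consisting of certain exceptional curves of $Y$ together with the strict transforms of the lines in $\C^2$ fixed by the reflections of $G$; these strict transforms, being the branch locus of the double cover $Y\to Y/\sigma$, are exactly the normalized branch components $B_1,\dots,B_r$ of $\C^2\to\C^2/G$.

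With the orbifold $[Y/\sigma]$ understood, we assemble the decomposition. Pushforward along $B_j\hookrightarrow Y$ with the anti-invariant $\sigma$-linearization gives fully faithful functors $D(B_j)\hookrightarrow D^{G/H}(Y)$; the coarse space $Y/\sigma$ has only rational (quotient) singularities, so $D(Y/\sigma)$ embeds fully faithfully into $D([Y/\sigma])$, and composing this embedding with pullback along the birational contraction $Y/\sigma\to\C^2/G$ produces a fully faithful $D(\C^2/G)\hookrightarrow D^{G/H}(Y)$. The left orthogonal of the blocks $D(B_1),\dots,D(B_r),D(\C^2/G)$ is then generated by finitely many exceptional objects $E_1,\dots,E_n$, which one exhibits explicitly from the fixed locus of $\sigma$ --- the anti-invariant $\cO$ and $\cO(1)$ on each pointwise-fixed exceptional $\bP^1$, one object for each exceptional curve carrying a nontrivial involution, and one for each isolated fixed point --- together with the exceptional objects of the partial resolution $Y/\sigma\to\C^2/G$. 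Semiorthogonality amounts to a finite list of $\mathrm{Ext}$-vanishings among these generators, generation follows from the explicit birational geometry (successive blow-downs and the known structure of $Y^{\sigma}$), and a sequence of mutations brings the collection into the stated order $\langle E_1,\dots,E_n,D(B_1),\dots,D(B_r),D(\C^2/G)\rangle$.

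Finally, to see that this is precisely the decomposition predicted by Conjecture~\ref{PVdB_conj}, we match pieces with conjugacy classes of $G$: the identity class gives $\bar X^{e}=\C^2/G$; a class of reflections $\lambda$ has $X^{\lambda}$ a line, so $\bar X^{\lambda}=X^{\lambda}/C(\lambda)$ is a smooth rational curve, which we identify with the corresponding $B_j$; and every remaining class has $X^{\lambda}=\{0\}$ (using that every pseudo-reflection in $G$ has order two), hence $\bar X^{\lambda}$ is a point contributing a single exceptional object. The match then reduces to the numerical identity $n+r+1=\#\{\text{conjugacy classes of }G\}=\#\{\text{irreducible representations of }G\}$, which we verify from the character tables case by case. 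The main obstacle is the second step: making the $\sigma$-action on $\hhilbc$, and hence the fixed locus $Y^{\sigma}$, explicit is genuinely delicate for $E_6$, $E_7$, $E_8$ and requires a careful chart-level analysis of the Hilbert scheme; identifying the $\sigma$-fixed curves with the branch components and pinning down the exact length of the exceptional collection are the remaining points demanding care.
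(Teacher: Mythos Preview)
Your overall plan matches the paper's: reduce $D^G(\C^2)$ to $D^{A}(Y)$ with $A=G/H$ and $Y=\HHilb(\C^2)$, compute the $A$-action on $Y$ explicitly from Ito--Nakamura's description of the exceptional locus, build a semiorthogonal decomposition, and then match pieces with conjugacy classes exactly as you do in your final paragraph (this is precisely the paper's argument for Corollary~B in \S\ref{OSOD}). Two points, however, separate your sketch from what actually works.

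First, your description of $Y^{\sigma}$ is wrong in a way that matters. You assert it is ``the union of finitely many isolated points with a collection of smooth rational curves''. The paper proves the opposite: there are \emph{no} isolated fixed points (Proposition~\ref{no_isolated}). Every fixed point that looks isolated \emph{within the exceptional locus} in fact lies on a fixed curve extending into the non-exceptional part $N\subset Y$, namely the strict transform of a component of the ramification locus of $\C^2/H\to\C^2/G$; the paper checks this case by case in \S\ref{int.G2mm2}--\ref{int.G22} by taking explicit one-parameter limits of ideals. The consequence (Corollary~\ref{YAsmooth}) is that $Y/A$ is \emph{smooth}, not merely a surface with rational singularities. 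Your bookkeeping of exceptional objects (``one for each isolated fixed point'') would therefore miscount, and your appeal to rational singularities of $Y/\sigma$ is unnecessary.

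Second, where you propose to build the decomposition by hand---pushforwards with anti-invariant linearization, a finite list of Ext-vanishings, explicit generators on each fixed curve---the paper instead uses off-the-shelf machinery that makes both semiorthogonality and generation automatic. By Geraschenko--Satriano (Corollary~\ref{corollary_quotient_stack_is_root_stack}) the map $[Y/A]\to (Y/A)^{\can}=Y/A$ is a second-root stack along the smooth disjoint branch divisors $D_1,\dots,D_n$; the root-stack semiorthogonal decomposition of \cite{BLS16,IU15} (Theorem~\ref{root.SOD}) then gives $D^A(Y)\simeq\langle D(D_1),\dots,D(D_n),D(Y/A)\rangle$ with no further verification; and Orlov's blow-up formula \eqref{orlov.blowup} handles $Y/A\to\C^2/G$. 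This also corrects your count: a pointwise-fixed exceptional $\bP^1$ contributes a $D(\bP^1)$ piece from the root stack \emph{and} its image in $Y/A$ is one of the curves blown down, so it yields three exceptional objects in total, not two; curves carrying a nontrivial involution contribute only through the blow-down. Your inventory (``anti-invariant $\cO$ and $\cO(1)$ on each pointwise-fixed $\bP^1$, one object for each curve with a nontrivial involution, one per isolated fixed point, plus the blow-down exceptionals'') double-counts some pieces and invents others.
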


Our main result gives a geometric description of a semiorthogonal decomposition of $D(\bC^2/G)$ for the reflection groups $G=G(2m,m,2)$ for $m\geq 3$, $G_{12}$, $G_{13}$, and $G_{22}$, which we show correspond to the components in the above conjecture.
Note that the intersections of $G_{12}$, $G_{13}$, and $G_{22}$ with $\SL(2,\C)$ are, respectively, the 
binary tetrahedral, octahedral, and icosahedral groups with singularities of type $E_6$, $E_7$ and $E_8$.

\begin{thma}\label{main}\hypertarget{main}{}
Let $G=G(2m,m,2)$ for $m\geq 3$, $G_{12}$, $G_{13}$, or $G_{22}$. 

There is a semiorthogonal decomposition of the following form,
where $B_1,\ldots,B_r$ are the normalizations of the irreducible components of the branch divisor of $\C^2\to \C^2/G$, $E_1,\ldots,E_n$ are exceptional objects
and $r+n+1$ is the number of distinct irreducible representations of $G$:
\[
\dbg(\C^2)\cong   \langle
D(B_1),\ldots, D(B_r), E_1,\ldots,E_n, D(\C^2/G)
  \rangle.
\]
\end{thma}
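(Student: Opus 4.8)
The plan is to transport $\dbg(\C^2)$ to an equivariant derived category on a smooth resolution, where the combinatorics of a single involution is visible, and then to apply a known semiorthogonal decomposition for $\Z/2$-actions on surfaces. Write $H=G\cap\SL(2,\C)$, a finite subgroup of $\SL(2,\C)$ of index $2$ in $G$, and put $\Gamma\colonequals G/H\cong\Z/2$. Let $Y\colonequals\hhilbc$, the Hilbert scheme of $H$-clusters; by the derived McKay correspondence for subgroups of $\SL(2,\C)$ (Kapranov--Vasserot, Bridgeland--King--Reid), $Y$ is the minimal resolution of $\C^2/H$ and the universal subscheme induces a Fourier--Mukai equivalence $\Phi\colon D^H(\C^2)\xrightarrow{\ \sim\ }D(Y)$. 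Since $H\trianglelefteq G$, the group $\Gamma$ acts on $D^H(\C^2)$ with $\dbg(\C^2)=D^G(\C^2)\simeq\bigl(D^H(\C^2)\bigr)^{\Gamma}$; it also acts on $Y$ by sending an $H$-invariant ideal $I$ to $gI$, where $g$ is a lift of the nontrivial element of $\Gamma$. The first substantive step is to make this $\Gamma$-action on $Y=\hhilbc$ completely explicit in affine coordinates and to check that $\Phi$ intertwines the two $\Gamma$-actions (equivalently, that the FM kernel is $\Gamma$-linearizable). Granting this, taking $\Gamma$-invariants yields $\dbg(\C^2)\simeq D^{\Gamma}(Y)$, and it remains to decompose the latter in terms of the involution $\tau$ generating $\Gamma$ on the smooth surface $Y$.

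Second, I would determine the fixed-point geometry of $\tau$ on $Y$ case by case for $G=G(2m,m,2)$ ($m\ge 3$), $\gtw$, $G_{13}$, and $G_{22}$. Because $H\le\SL(2,\C)$ has no pseudo-reflections, $\C^2\to\C^2/H$ is \'etale in codimension one, so the branch divisor of $\C^2\to\C^2/G$ is the image of the reflecting lines of the order-two reflections in $G\setminus H$; its strict transform in $Y$ should be exactly the divisorial part of $Y^{\tau}$, a disjoint union of smooth rational curves mapping isomorphically to the normalizations $B_1,\dots,B_r$ of the branch components. One must also record how $\tau$ permutes the $(-2)$-curves of the exceptional divisor of $Y\to\C^2/H$ (fixed pointwise, fixed setwise, or swapped in pairs) and locate the isolated fixed points of $\tau$; at such a point the action on the tangent space is necessarily $-\mathrm{id}$, so blowing it up produces a $\tau$-fixed $\bP^1$. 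Finally, I would identify the quotient: the composite $Y\to\C^2/H\to\C^2/G$ is $\Gamma$-invariant, and after blowing up the isolated fixed points to obtain $\widetilde{Y}$ with $\widetilde{Y}^{\tau}$ a smooth divisor, $\widetilde{Y}/\Gamma$ is a smooth surface admitting a proper birational morphism to $\C^2/G$, which is smooth by Chevalley--Shephard--Todd; hence this morphism is a composition of blow-downs onto $\C^2/G\cong\A^2$.

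Third, I would assemble the decomposition. The blow-up formula (equivariant version) gives $D^{\Gamma}(\widetilde{Y})=\langle D^{\Gamma}(Y),\ \text{(objects supported on the exceptional $\bP^1$'s)}\rangle$, so $D^{\Gamma}(Y)$ is the corresponding admissible subcategory. For the smooth double cover $\widetilde{Y}\to\widetilde{Y}/\Gamma$ branched over the smooth divisor $\widetilde{Y}^{\tau}$, one has $D^{\Gamma}(\widetilde{Y})=\langle D(\widetilde{Y}^{\tau}),\ D(\widetilde{Y}/\Gamma)\rangle$, where $D(\widetilde{Y}^{\tau})$ contributes one copy of $D(B_i)$ for each reflecting-line component and copies of $D(\bP^1)=\langle\cO,\cO(1)\rangle$ for the exceptional $\bP^1$'s and pointwise-fixed $(-2)$-curves; and $D(\widetilde{Y}/\Gamma)=\langle\text{exceptional objects},\ D(\C^2/G)\rangle$ by the blow-down description above. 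Since the branch curves $B_i$ are affine while all the other contributions are line bundles on compact rational curves or skyscrapers, every non-$B_i$, non-$D(\C^2/G)$ piece is an exceptional object; collecting and reordering by mutation yields
\[
\dbg(\C^2)\cong\langle D(B_1),\dots,D(B_r),\ E_1,\dots,E_n,\ D(\C^2/G)\rangle .
\]
The equality $r+n+1=\#\{\text{irreducible representations of }G\}$ then follows by matching the three types of pieces with the conjugacy classes of $G$---the class of $1$, the classes of reflections, and the remaining classes---which is read off the character table of each group and also establishes the Orbifold Semiorthogonal Decomposition Conjecture for these $G$.

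\textbf{Main obstacle.} The crux is the second step: determining the $\Gamma=G/H$-action on $\hhilbc$ explicitly and carrying out the fixed-locus and exceptional-curve bookkeeping separately for each of the four (families of) groups---in particular verifying that the divisorial fixed part is precisely $\bigsqcup_i B_i$ and that the isolated fixed points and permuted $(-2)$-curves contribute exactly the predicted number $n$ of exceptional objects. A secondary difficulty is stating the surface-involution decomposition robustly enough to handle the isolated fixed points (and the $A_1$-quotient behaviour they create before blowing up) so that the terminal piece is $D(\C^2/G)$ itself rather than an iterated blow-up of it.
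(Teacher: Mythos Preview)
Your outline is correct and follows essentially the same route as the paper: pass from $D^G(\C^2)$ to $D^{A}(Y)$ with $Y=\hhilbc$ and $A=G/H\cong\Z/2$ (this is \cite[Theorem~4.1]{IU15}, quoted as \Cref{zeroth}), compute the $A$-fixed locus on $Y$ case by case via Ito--Nakamura's explicit description of the exceptional set, and then assemble the decomposition from a branched-double-cover (equivalently, second-root-stack) SOD together with Orlov's blow-up formula applied to $Y/A\to\C^2/G$.

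The one point where the paper sharpens your plan is your ``secondary difficulty'': you anticipate isolated fixed points of $\tau$ on $Y$ and propose to blow them up before taking the quotient. The paper shows (\Cref{no_isolated}) that there are \emph{no} isolated fixed points---the points that look isolated inside the exceptional locus are in fact endpoints of fixed curves extending into the non-exceptional part $N$, namely the strict transforms of the ramification curves of $\C^2/H\to\C^2/G$. This is verified by an explicit flat-limit computation in each case (\S\ref{int.G2mm2}--\ref{int.G22}), and it implies directly that $Y/A$ is already smooth (\Cref{YAsmooth}), so your blow-up $\widetilde{Y}$ is unnecessary and $[Y/A]\to Y/A$ is a second root stack along a smooth divisor. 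Apart from this simplification, and the paper's use of the Geraschenko--Satriano root-stack language in place of your double-cover phrasing (equivalent for $\Z/2$), the arguments coincide; the count $r+n+1=\#\operatorname{Irr}(G)$ is obtained in the paper by explicit enumeration against the McKay quivers rather than by your conjugacy-class matching, which the paper instead reserves for the proof of \refcor.
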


\begin{corb}\label{maincor}\hypertarget{maincor}{}
For each group $G$ in \refmain{} acting on $\bC^2$, \Cref{PVdB_conj} holds. \end{corb}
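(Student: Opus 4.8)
The plan is to derive Corollary~B directly from \refmain{}, matching the $r+n+1$ factors of the semiorthogonal decomposition there with the subcategories $C_{[\lambda]}$ predicted by \Cref{PVdB_conj}. Before that one must record that the hypotheses of \Cref{PVdB_conj} are met for $X=\C^2$: the $G$-action is effective since $G\leq\GL(2,\bC)$, and the smoothness of every geometric quotient $\bar X^\lambda=X^\lambda/C(\lambda)$ will fall out of the case analysis below (for $\lambda=1$ this is the Chevalley–Shephard–Todd theorem already invoked in the introduction).

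The key step is a description of the centralizers. Because $\dim X=2$, a non-identity $\lambda\in G$ is either a pseudo-reflection, whose fixed locus $X^\lambda$ is a line $L_\lambda$ (its mirror), or satisfies $X^\lambda=\{0\}$. For each of the four groups in \refmain{} every pseudo-reflection has order two, so every mirror is fixed pointwise by a single non-identity element of $G$. Given a pseudo-reflection $\lambda$, I claim that $C(\lambda)$ is exactly the setwise stabilizer $N_G(L_\lambda)=\{g\in G:gL_\lambda=L_\lambda\}$: conjugating $\lambda$ by $g$ gives the pseudo-reflection with mirror $gL_\lambda$ and the same nontrivial eigenvalue, so $g\lambda g^{-1}=\lambda$ forces $gL_\lambda=L_\lambda$; conversely if $gL_\lambda=L_\lambda$ then $g\lambda g^{-1}$ and $\lambda$ both fix $L_\lambda$ pointwise with the same nontrivial eigenvalue, so $g\lambda g^{-1}\lambda^{-1}$ is unipotent, hence trivial as $G$ is finite. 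The same computation shows that two pseudo-reflections are $G$-conjugate precisely when their mirrors lie in one $G$-orbit, so $[\lambda]\mapsto(\text{the image of }L_\lambda\text{ in }\C^2/G)$ is a bijection from conjugacy classes of pseudo-reflections to irreducible components of the branch divisor. Finally $\bar X^\lambda=L_\lambda/N_G(L_\lambda)$ is the quotient of $\A^1$ by a finite linear action, hence $\cong\A^1$ and normal, and it maps birationally onto the image of $L_\lambda$ in $\C^2/G$; so $\bar X^\lambda$ is the normalization $B_i$ of the corresponding branch component, and $D(\bar X^\lambda)\cong D(B_i)$.

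It remains to reconcile the pieces and the counts. The conjugacy classes of $G$ are: the identity, with $C(1)=G$ and $\bar X^1=\C^2/G$, accounting for the factor $D(\C^2/G)$; the $r$ classes of pseudo-reflections, accounting for $D(B_1),\dots,D(B_r)$ by the previous paragraph; and the remaining classes, for which $X^\lambda=\{0\}$ and so $\bar X^\lambda=\Spec\bC$, whose derived category is the one generated by any single exceptional object. Since the number of conjugacy classes of $G$ equals the number of its irreducible representations, which is $r+n+1$ by \refmain{}, there are exactly $n$ of these last classes, and they are matched with $E_1,\dots,E_n$; this exhibits the semiorthogonal decomposition of \refmain{} as the one predicted by \Cref{PVdB_conj}. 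I expect the only genuine work, beyond the structural input of \refmain{}, to be the per-group bookkeeping: listing the mirrors and reflections of $G(2m,m,2)$, $G_{12}$, $G_{13}$, and $G_{22}$, checking that all reflections have order two, and pairing each mirror orbit with the component $B_i$ occurring in \refmain{}.
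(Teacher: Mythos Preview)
Your proposal is correct and follows essentially the same route as the paper: split the conjugacy classes into the identity, reflections, and the rest, identify the corresponding $\bar X^\lambda$ as $\C^2/G$, $\A^1$, and $\Spec\C$ respectively, and match them to the pieces of \refmain{} via the bijection between reflection conjugacy classes and branch components (your treatment of that bijection and of $C(\lambda)=N_G(L_\lambda)$ is in fact more careful than the paper's). The per-group bookkeeping you anticipate is unnecessary: since $[G:H]=2$ and every reflection has determinant $\neq 1$, the square of any reflection lies in $H$ yet still fixes a hyperplane, hence is the identity, so all reflections in these groups automatically have order two and your argument is already complete.
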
  

Our proof strategy is inspired by work of Potter, who proved the analogous result for the dihedral groups $G(m,m,2)$ in \cite{Pot18} (see \cite{Capellan:2024} for further analysis), building on work of Ishii and Ueda \cite{IU15} who gave a semiorthogonal decomposition for finite small subgroups of $\GL(2,\bC)$. An essential step in our arguments is to compute, for each group $G$ appearing in
\refmain, the action of $G/H$ on the $H$-Hilbert scheme $\HHilb(\C^2)$ for $H:=G\cap\SL(2,\C)$. We do this by working with the explicit description of the Hilbert scheme given in these cases by Ito and Nakamura \cite{Ito-Nakamura:1999}.

We wish to acknowledge several other related works.
\refmain{} has already been proven in the case of $G(4,2,2)$ by Lim and Rota \cite{Lim-Rota:2024}.
They prove \Cref{PVdB_conj} in this case using direct methods 
that do not reference the Hilbert scheme; the authors  prove further orbifold semiorthogonal decompositions for groups acting on abelian varieties. In \cite{Kawamata:2016}, Kawamata proves a version of the derived McKay correspondence for finite subgroups of $\GL(2,\bC)$. Specifically, he shows that for any finite subgroup $G\leq\GL(2,\bC)$, there exists some $m\geq0$ and smooth closed subvarieties $Z_i$ of $\bC^2/G$ for $1\leq i \leq m$ such that $D([\bC^2/G])\simeq\langle D(Z_1),\dots, D(Z_m),D(Y)\rangle$ where $Y\to\bC^2/G$ is the minimal resolution. Our proof strategy is similar, and we are able to enumerate the components of the decomposition. In \cite{Kawamata:2018}, Kawamata gives a similar result for finite subgroups of $\GL(3,\bC)$ and connects the McKay correspondence to his conjecture, the \emph{DK-hypothesis}.

More recently, work of Krug \cite{Krug:2025} proves \Cref{PVdB_conj} for the imprimitive complex reflection groups of rank two, 
$G_4,\dots,G_{37}$, notably avoiding the classical McKay correspondence in the process.
Around the same time, Ishii--Nimura \cite{Ishii-Nimura:2025} deduced \Cref{PVdB_conj} for all complex reflection groups of rank $2$ from the work of \cite{Kawamata:2016}.
They also prove the conjecture for real reflection groups of rank $3$.
We would also like to direct the reader to work of \cite{Hu:2025} describing the fixed locus of anti-Poisson involutions on the minimal resolutions of $\bC^2/H$.
In each case we study, the action of $G/H$ on $\hhilbc$ is one of these anti-Poisson involutions, and there is one additional anti-Poisson involution
for the groups $G(2m,m,2)$ and the group $G_{12}$.

We were also inspired by recent work of Buchweitz, Faber, and Ingalls, who give an algebraic McKay correspondence  
in terms of maximal Cohen--Macaulay modules of the discriminant of the reflection group \cite{BFI20}.

It would be interesting to explore these connections further for the groups we study. To prove \refcor{}, we establish that the components of the semiorthogonal decomposition in \refmain{} are isomorphic to those predicted by \Cref{PVdB_conj}, but we do not produce an explicit isomorphism. In particular, we do not establish a correspondence between the components of \refmain{} and conjugacy classes of $G$, though the proof of \refmain{} shows some patterns suggesting possible relationships between geometry and the representations of $G$ (see~\Cref{rmk.proof}).

\subsection*{Outline of paper}
In Section \ref{sec:historical_overview}
we give further background on the McKay correspondence and in
\Cref{sec:groups}, we introduce the finite reflection groups $G$
in $\GL(2,\C)$ that are generated by order~$2$ reflections.
In \Cref{sec:hhilb}, for $H:=G\cap \SL(2,\C)$, we  discuss the $H$-Hilbert scheme, which is the primary setting for our computations.
Then in \Cref{sec:SOD}, we discuss the results on semiorthogonal decompositions of categories that we will need to prove our results.
Then, we give proofs of \refmain{} for each of the groups in question in \Cref{sec:proof} and prove \refcor{}.
We gather the specifics of our computations of fixed points in the Hilbert scheme $\HHilb(\C^2)$ under the action of $A:=G/H$ in \Cref{appendix}.

\subsection*{Accompanying code}
The code file \texttt{FixedLocus.m2} associated with this paper can be found at the github repository \cite{code} and as an ancillary file with the arxiv preprint.

\subsection{Conventions and notation}
We work over the base field $\C$. For any variety or stack $X$, we denote the bounded derived category of coherent sheaves on $X$ by $D(X)$. Given a group action $G$ on a variety $X$, $D^G(X)$ is the bounded derived category of $G$-equivariant coherent sheaves on $X$.

Sections \ref{sec:groups}-\ref{sec:proof} and 
\Cref{appendix} are in a common setting; we establish notation for this setting here. Given a reflection group $G\leq \GL(2,\C)$ as in \refmain{}, we write $H:=G\cap\SL(2,\bC)$, which in this setting is index $2$, making $A:= G/H\simeq \Z/2$. We write $Y:=\HHilb(\bC^2)$ for the $H$-orbit Hilbert scheme of $\bC^2$, which is a crepant resolution for $\C^2/H$. The action of $A$ on $\C^2/H$ extends to $Y$.
In the following diagram, the vertical maps are quotient morphisms and the horizontal maps are resolutions.
We note that $\C^2/G$ is smooth, and so the map $Y/A\to \C^2/G$ is not resolving any singularities, but is a resolution in the sense of being proper and birational with a smooth source.
\begin{equation}\label{setting}
  \begin{tikzcd}
    \bC^2\arrow[d]&  \\
     \bC^2/H\arrow[d] & Y\arrow[l]\arrow[d]  \\
     \bC^2/G & Y/A \arrow[l]  
   \end{tikzcd} 
\end{equation}

\subsection*{Acknowledgements} This collaboration started at the American Mathematical Society Mathematics Research Community called ``Derived Categories, Arithmetic and Geometry''. We thank the AMS and National Science Foundation for funding this program.
We also thank Bronson Lim for helpful comments in an early stage of this project.
E.F., K.H., and P.M. thank the SFU math department and PIMS for their hospitality and support.
Work of E.F.~was supported by EPSRC grant EP/W007509/1. This material is based upon work supported by the NSF under Grant No. DMS-1928930 and by the Alfred P. Sloan Foundation under grant G-2021-16778, while E.F.~ was in residence at the Simons Laufer Mathematical Sciences Institute (formerly MSRI) in Berkeley, California, during the Spring 2024 semester.
A.B. was partially supported by the NSF under Award No.~2302263.
K.H.\ is funded by an NSERC Discovery grant. 
D.S. was partially supported by the UW-Whitewater Mathematics Department Strategic Priorities Fund.

\section{Historical Overview of the McKay correspondence}\label{sec:historical_overview}

McKay showed in \cite{McK80} that for any finite subgroup of $\SL(2,\bC)$, there is a one-to-one correspondence between nontrivial irreducible representations of $G$ and irreducible components of the exceptional divisor of the minimal resolution of singularities $\pi\colon Y\to\bC^2/G$. More specifically, he showed that there was an isomorphism of quivers between the Dynkin diagram for the representation theory of $G$ and the resolution graph of $Y$. Gonzalez-Sprinberg and Verdier made this correspondence more geometrically explicit in \cite{GSV83} by constructing a vector bundle for each representation whose first Chern class transversely intersects exactly one irreducible component of the exceptional divisor. As a consequence they determined that the $G$-equivariant Grothendieck group of $\bC^2$ is isomorphic to the ordinary Grothendieck group of $Y$ \cite[Theorem~2.2]{GSV83}.

Soon after, Auslander provided an algebraic version of this correspondence in \cite{Aus86}. Let $S=\bC[x,y]$ and $R=S^G$, the coordinate ring of $\bC^2/G$.
By work of Herzog \cite{Herzog}, there is a 1-1 correspondence between indecomposable reflexive $R$-modules, and indecomposable $R$-summands of $S$. Auslander showed there is an isomorphism $S*G\cong\End_R(S)$, and extended the 1-1 correspondence to indecomposable projective modules over this skew group ring and thus to irreducible representations of $G$. 

In \cite{Ito-Nakamura:1996,Ito-Nakamura:1999}, Ito and Nakamura further developed this correspondence by using Hilbert schemes to construct minimal resolutions. For a finite group $G\leq\GL(r,\bC)$ of order $n$, the $G$-Hilbert scheme $\GHilb(\bC^r)$ is a subscheme of $\Hilb^n(\bC^r)$ that parametrizes certain $G$-invariant $n$-points in $\bC^r$. Ito and Nakamura showed that for $G\leq \SL(2,\bC)$, $\GHilb(\bC^2)$ is not only the minimal resolution of $\bC^2/G$, but a crepant resolution. Furthermore, they give an explicit correspondence between nontrivial irreducible representations of $G$ and the components of the exceptional divisor of $\GHilb(\bC^2)$. Using this moduli-theoretic description of the McKay correspondence, Kapranov and Vasserot in \cite{Kapranov-Vasserot:2000} gave a derived version of the McKay correspondence, proving there is a  derived equivalence $\dbg(\bC^2)\simeq D(Y)$, which gave the previous statement on $K_0$ by \cite{GSV83} and, by passing to a suitable enhancement, higher $K$-groups simultaneously.

Extending the McKay correspondence generally proceeds in two directions. One option is to increase the dimension and consider actions of finite subgroups of $\SL(r,\bC)$ on $\bC^r$. Another is to remain in dimension $2$ but expand our focus to consider other finite subgroups of $\GL(2,\bC)$. Some work has been done in both directions; we briefly survey them here.

In the case where $G$ is a finite small subgroup of $\GL(2,\bC)$, a special version of the McKay correspondence has long been known. Auslander's results \cite{Aus86} still hold, giving a one-to-one correspondence between irreducible representations of $G$ and indecomposable reflexive $\bC[x,y]^G$-modules. However, the geometric picture becomes somewhat more nuanced, as the exceptional divisor of the minimal resolution $\pi:Y\to\bC^2/G$ does not have as many components as there are irreducible representations of $G$. Wunram introduced special representations in \cite{Wunram:1988} and showed that there is a bijection between irreducible components of the exceptional divisor of the minimal resolution of $\bC^2/G$ and nontrivial irreducible special representations.

This work was extended by Ishii in \cite{Ishii:2002}, who showed that $Y=\GHilb(\bC^2)$ is the minimal resolution of $\bC^2/G$ and gave an explicit correspondence between the nontrivial irreducible special representations of $G$ and the irreducible components of the exceptional divisor of $Y$, \`a la \cite{Ito-Nakamura:1999}. Ishii also produced a fully faithful embedding of $D(Y)\hookrightarrow D^G(\bC^2)$. Several years later, Ishii and Ueda gave the appropriate derived version of the correspondence in \cite{IU15}. In this setting, the embedding of $D(Y)$ inside of $D^G(\bC^2)$ from \cite{Ishii:2002} is part of a semiorthogonal decomposition. The other subcategories in this semiorthogonal decomposition are generated by exceptional objects corresponding to the non-special representations of $G$.

In dimension $3$, Bridgeland, King, and Reid show that, similar to the $2$-dimensional case, for finite subgroups $G\leq\SL(3,\bC)$, $Y=\GHilb(\bC^3)$ is a crepant resolution of singularities of $\bC^3/G$ and $\dbg(\bC^3)\simeq D(Y)$ \cite{Bridgeland-King-Reid:2001}.
Moreover, the Craw--Ishii conjecture holds: every projective crepant resolution of $\bC^3/G$ is isomorphic to a moduli space of stable $G$-constellations \cite{MR4871720}.
Beyond dimension $3$, it is unknown in general if a crepant resolution of $\bC^r/G$ exists. However, Nakamura has conjectured that when a crepant resolution exists, $Y:=\GHilb(\bC^r)$ is such a resolution. As noted before, Kawamata has also proved a version of the derived McKay correspondence for finite subgroups of $\GL(3,\bC)$ \cite{Kawamata:2018}.

\section{Finite reflection subgroups of \texorpdfstring{$\GL(2,\C)$}{GL2C} containing \texorpdfstring{$-1$}{-1}}\label{sec:groups}

In this section, we will examine the reflection groups that appear in \refmain. These are precisely the complex reflection groups
in $\GL(2,\C)$ that are generated by order $2$ reflections, where we take a reflection to be any (order $2$) element of $\GL(2,\C)$ whose fixed locus is codimension $1$.
Moreover, these groups are in one-to-one correspondence with the finite subgroups of $\SL(2,\C)$, up to conjugation. 
This correspondence is given by intersecting with $\SL(2,\C)$. 
Each of these reflection groups $G$ has the intersection
$H:=G\cap \SL(2,\C)$ as an index~$2$ subgroup; $G$ is an extension of $H$ by $-1$.
The primary source for this section is \cite{GSV83}. 
The correspondence we discuss is also shown in \cite{Shephard_Todd,Knorrer}; see \cite{BFI} for a more recent examination.

The exceptional reflection groups are denoted by $G_{12}$, $G_{13}$, and $G_{22}$. Their intersections with $\SL(2,\C)$ are the binary tetrahedral, octahedral, and icosahedral groups with singularities of type $E_6$, $E_7$ and $E_8$.
The intersection of the reflection group $G(2m,m,2)$ with $\SL(2,\C)$ is isomorphic to the binary dihedral group of order $4m$.
The quotient $\C^2/H$ has a surface singularity of $D_{m+2}$ type; this notation is not to be mistaken for the underlying dihedral group.
To complete the classification, there are the reflection groups $G(m,m,2)$, which are isomorphic to the dihedral group of order $2m$. Their intersections with $\SL(2,\C)$ are cyclic of order $m$, and their quotients have surface singularities of type $A_{m-1}$.

\begin{rmk}
  The representations of the reflection groups $G$ have a close relationship with those of their subgroups $H$. Each reflection group has a nontrivial $1$-dimensional representation $\ep$ whose restriction to $H$ is trivial. The quotient group $A:=G/H$ acts on the representations of $H$ by sending them to their contragredients. This involution on representations of $H$ relates to the involution on representations of $G$ given by $\ep\otimes(-)$ via induction and restriction. By \cite[Prop.~3.4]{GSV83}, if $\rho$ is a representation of $H$ isomorphic to its contragredient, then $\Ind_G\rho$ is the sum of two distinct irreducible representations $\rho'$ and $\rho'\otimes\ep$. Each of these representations restrict to $\rho$. If $\rho$ is a representation of $H$ that is not isomorphic to its contragredient, then $\Ind_G\rho$ is irreducible and isomorphic to its tensor product with $\ep$. 
\end{rmk}

  \begin{figure}[h!]
    \begin{subfigure}[t]{0.49\textwidth}
      \centering
      \begin{tikzpicture}
        \node at (-3,0) {$E_6\colon$};
        \node at (-2,1) {$\bullet$};
        \node at (-2,-1) {$\bullet$};
        \node at (-1,1) {$\bullet$};
        \node at (-1,-1) {$\bullet$};
        \node at (0,0) {$\bullet$};
        \node at (1,0) {$\bullet$};
        \node at (2,0) {$\times$};
        \draw (-2,1)--(-1,1);
        \draw (-2,-1)--(-1,-1);
        \draw (-1,1)--(0,0);
        \draw (-1,-1)--(0,0);
        \draw (0,0)--(1,0);
        \draw (1,0)--(2,0);
        \node[above] at (-2,1) {$\rho_1'$};
        \node[below] at (-2,-1) {$\rho_1''$};
        \node[above] at (-1,1) {$\rho_2'$};
        \node[below] at (-1,-1) {$\rho_2''$};
        \node[above] at (0,0) {$\rho_3$};
        \node[above] at (1,0) {$\rho_2$};
        \node[above] at (2,0) {$\rho_0$};
      \end{tikzpicture}
    \end{subfigure}
    \begin{subfigure}[t]{0.49\textwidth}
      \centering
      \begin{tikzpicture}
        \node at (4,0) {$E_6'\colon$};
        \node at (5,0) {$\bullet$};
        \node at (6,0) {$\bullet$};
        \node at (7,1) {$\bullet$};
        \node at (7,-1) {$\bullet$};
        \node at (8,1) {$\bullet$};
        \node at (8,-1) {$\bullet$};
        \node at (9,1) {$\times$};
        \node at (9,-1) {$\bullet$};
        \draw(5,0)--(6,0);
        \draw (6,0)--(7,1);
        \draw (6,0)--(7,-1);
        \draw[->-] (8,1)--(7,1);
        \draw[->--] (7,-1)--(8,1);
        \draw[->--] (7,1)--(8,-1);
        \draw[->--] (8,-1)--(9,1);
        \draw[->--] (8,1)--(9,-1);
        \draw[->-] (8,-1)--(7,-1);
        \draw[->-] (9,1)--(8,1);
        \draw[->-] (9,-1)--(8,-1);
        \node[above] at (5,0) {$\rho_1'$};
        \node[above] at (6,0) {$\rho_2'$};
        \node[above] at (7,1) {$\rho_3$};
        \node[below] at (7,-1) {$\ep\rho_3$};
        \node[above] at (8,1) {$\rho_2$};
        \node[below] at (8,-1) {$\ep\rho_2$};
        \node[above] at (9,1) {$\rho_0$};
        \node[below] at (9,-1) {$\ep\rho_0$};
      \end{tikzpicture}
    \end{subfigure}
  \caption{$E_6$ and $E_6'$ Dynkin diagrams}\label{E6.dynkincopy}
  \end{figure}

The McKay quivers for $E_6$ and $E_6'$ are shown in \Cref{E6.dynkincopy}; the quivers for 
each of the groups in question can be found in \Cref{appendixb}. For the subgroups of $\SL(2,\C)$, these are the extended $A$, $D$, and $E$ Dynkin diagrams. We label the quivers of the corresponding reflection groups as $A'$, $D'$, and $E'$. In the quivers, each vertex corresponds to an irreducible representation. Edges are determined using the natural representation $\rho_{\nat}$ given by the inclusion of the group in $\GL(2,\C)$:
an arrow from $\rho_i$ to $\rho_j$ indicates that $\rho_j$ is a summand of $\rho_{\nat}\otimes\rho_i$. In cases where this relationship is symmetric, we consolidate arrows in both directions to a single undirected edge.
Since we will use results in \cite{Ito-Nakamura:1999} for our computations, we follow their labelling system for representations of the subgroups of $\SL(2,\C)$. The subscripts on the representations in the $E_6$, $E_7$, and $E_8$ diagrams indicate their dimensions, but this is not the case in the diagrams of type $A$ and $D$.
We then somewhat abuse this notation by reusing it for the representations of the corresponding reflection group, given their close relationship.
However, we follow \cite{GSV83} for the particular positioning  of the diagrams; these  place contragredients and tensor products with $\ep$ across a middle, horizontal axis of reflection from one another, to the extent that it is possible without sacrificing compactness of presentation.

Finally, in \Cref{disc.table}, we give the ring of invariants $\bC[x,y]^G$, the branch locus $B$ of the map $\bC^2/H\to\bC^2/G$ (up to a suitable change of coordinates), and the number of connected components of $B$, for each reflection group $G$. This information is from \cite{Bannai:1976}.

\begin{figure}
\renewcommand{\arraystretch}{1.6}
\begin{tabular}{c|c|c|c}
  Group & $\bC[x,y]^G$ & $B$ & \makecell{\# comp.\ \\ of $B$}
\\\hline  
  $G(m,m,2)$ & $\bC[xy,x^m+y^m]$ & $V(z_1^m-z_2^2)$ &
\makecell{$2$ ($m$ even)\\$1$ ($m$ odd)}
  \\\hline
  $G(2m,m,2)$ & $\bC[x^2y^2,x^{2m}+y^{2m}]$ & $V(z_1(z_1^m-z_2^2))$ &
\makecell{$3$ ($m$ even)\\$2$ ($m$ odd)}
  \\\hline
  $G_{12}$  & $\bC[x^8+14x^4y^4+y^8,xy(x^4-y^4)]$ & $V(z_1^3-z_2^4)$ & $1$
  \\\hline 
  $G_{13}$  & $\bC[x^8+14x^4y^4+y^8,(xy)^2(x^4-y^4)^2]$ & $V(z_1(z_1^2-z_2^3))$ & $2$
  \\\hline
  $G_{22}$  &
\makecell{$\bC[xy(x^{10}+11x^5y^5-y^{10})$,\\
$-x^{20}-y^{20}-494x^{10}y^{10}$\\
$+228(x^{15}y^{5}-x^5y^{15})]$}
  & $V(z_1^3-z_2^5)$ & $1$
\end{tabular}
\caption{Table of invariant rings, branch divisors, and number of components in branch divisors}\label{disc.table}
\end{figure}

\section{\texorpdfstring{$\hhilbc$}{H-HilbC}}\label{sec:hhilb}

In this section we examine the structure of $\hhilbc$ and $\hhilbc/A$, concluding with a proof that $\hhilbc/A$ is smooth.

\subsection{Structure of \texorpdfstring{$\hhilbc$}{H-HilbC}}
Let $H$ be a finite subgroup of $\SL(2,\C)$. In \cite{Ito-Nakamura:1999}, Ito and Nakamura give a new perspective on the classical McKay correspondence via moduli theory, using the \emph{$H$-orbit Hilbert scheme}.
An essential part of their work is an explicit description of $\hhilbc$ for each such $H$, which we will use in our computations.
We include an overview here for the convenience of the reader.

\begin{dfn}[{{\cite[Theorem~9.3]{Ito-Nakamura:1999}}}] Let $n=|H|$. The $H$-orbit Hilbert scheme of $\bC^2$, denoted $\HHilb(\bC^2)$, is the unique component of the fixed locus $\Hilb^n(\bC^2)^H\subset \Hilb^n(\bC^2)$ dominating $\bC^2/H$ via the Hilbert--Chow morphism
  $\Hilb^n(\bC^2)^H\to S^n(\bC^2)^H\simeq \bC^2/H$.
  Furthermore, $\HHilb(\bC^2)$ is a crepant (equivalently minimal) resolution of $\bC^2/H$.
\end{dfn}

As a moduli space, $\Hilb^n(\bC^2)^H$
parametrizes $H$-invariant length $|H|$ subschemes of $\bC^2$, equivalently, $H$-invariant ideals of the coordinate ring $\bC[x,y]$.
Then $Y:=\HHilb(\bC^2)$ paramatrizes such subschemes that furthermore correspond to ideals $I$ so that there is an isomorphism of $H$-modules between
$\cO_{\bC^2}/I$ and $\bC[H]$, the regular representation of $H$. 
The \emph{exceptional locus} $E$ of $Y\to \bC^2/H$ consists of the $H$-invariant subschemes supported at the origin.
We call the open subset of $Y$ outside the exceptional locus
$N\subseteq Y$ for \emph{non-exceptional}.
$H$ acts freely on $\bC^2$ outside of the origin,
and thus the set of $H$-orbits of $\bC^2\setminus \{(0,0)\}$ is isomorphic to $\bC^2\setminus \{(0,0)\}$, so $N\simeq \bC^2\setminus \{(0,0)\}$.


We will generally refer to points in $\hhilbc$ using their corresponding ideals, which we now describe.

\subsubsection{Exceptional locus of $Y$}

Fixing coordinates so that $\bC^2=\Spec\bC[x,y]$, let $\fm=(x,y)$ be the maximal ideal of the origin in $\bC^2$, $\fm_S$ be the maximal ideal at the origin in $\C^2/H$, and 
$\fn:=\fm_S\cdot \C[x,y]$, i.e., 
the ideal generated by $H$-invariant polynomials in $\C[x,y]$.

Let $I$ be an ideal corresponding to a point in $E\subset Y$.
Because $I$ is supported at the origin, $I\subseteq\fm$ and $\fn\subseteq I$ \cite[Corollary~9.6]{Ito-Nakamura:1999}.
Following the notation of \cite{Ito-Nakamura:1999},
we define for convenience the following finite $H$-module: 
\[V(I):=I/(\fm I+\fn).\]

Let $V(\rho)$ be the nontrivial irreducible $H$-module corresponding to the nontrivial irreducible representation $\rho:H\to\GL(V(\rho))$. We define the following loci in $E\subset Y$:
\begin{align*}
E(\rho)&:=\{I\in \hhilbc\colon V(\rho)\subseteq V(I)\},\\
P(\rho,\rho')&:=\{I\in \hhilbc \colon V(\rho)\oplus V(\rho')\subseteq V(I)\}.\end{align*}
Ito and Nakamura prove the McKay correspondence by showing that the assignment $\rho\mapsto E(\rho)$ gives a bijection between nontrivial irreducible representations of $H$ and the irreducible components $E(\rho)\simeq\bP^1$ of $E$. The locus $P(\rho,\rho')$ is the intersection of $E(\rho)$ and
$E(\rho')$; it is nonempty if and only if $\rho$ and $\rho'$ are adjacent in the Dynkin diagram of $H$.
If $P(\rho,\rho')\neq\emptyset$, then it consists of a single reduced point at which $E(\rho)$ and $E(\rho')$ intersect transversally \cite[Theorem~10.4]{Ito-Nakamura:1999}.

For each finite group $H\leq \SL(2,\bC)$, 
Ito and Nakamura identify 
the ideals corresponding to points in the exceptional locus as certain submodules of $\fm/\fn$. We recount the details in \Cref{IN.main} in \Cref{appendix} before using it for our own computations.

\subsubsection{Outside of the exceptional locus of $Y$}

For each finite subgroup $H$, the ideal $\fn\subseteq \C[x,y]$ can be generated by three polynomials $f_1,f_2,f_3\in\bC[x,y]$. The points of $N$ correspond to points of $\bC^2\setminus\{(0,0)\}$. In particular, $(a,b)\in \bC^2\setminus\{(0,0)\}$ corresponds to the following ideal:
\[
I_{(a,b)}:=(f_1(x,y)-f_1(a,b),f_2(x,y)-f_2(a,b),f_3(x,y)-f_3(a,b)).
\]
\begin{rmk}
  The ideals $I_{(a,b)}$ are products of the maximal ideals of all points in the $H$-orbit of $(a,b)$. Since these points are distinct, the product of these maximal ideals is equal to their intersection, hence 
the ideals  $I_{(a,b)}$ are radical.
 Thus, if some $f\in \C[x,y]$ does not vanish at any of the points in the $H$-orbit, $f$ is a unit in $\bC[x,y]/I_{(a,b)}$. 
\end{rmk}

\subsection{Structure of \texorpdfstring{$\hhilbc/A$}{H-HilbC/A}}

It is significant to our proof of \refmain{} that the quotient of $Y$ by
$A:=G/H\simeq \Z/2$ is smooth. The geometry of the branch locus of the quotient $Y\to Y/A$ also plays a crucial role. 

In order to elucidate both of these, we will calculate the fixed locus of the action of $A$ on $Y$. In \Cref{appendix} we compute the fixed points of the action of $A$ on the exceptional locus of $Y$.
For each group $G$, these computations  show that there are points of $Y$ fixed by $A$ that are isolated \emph{within the exceptional locus}. In this section, we will show that these fixed points are not isolated in $Y$ by examining the parts of the fixed locus of the action of $A$ on $Y$ that extend outside of the exceptional locus.

Therefore, we wish to compute which ideals $I_{(a,b)}$ are fixed by the action of $A$ in each case. We will arrange our choice of $f_1$, $f_2$, and $f_3$ in each case so that $f_1$ and $f_2$ are fixed by the action of $A$ and $f_3$ is sent to $-f_3$. These choices of $f_1$, $f_2$ and $f_3$ are all given in the subsections below; the actions of $A$ in each case are given in \Cref{appendix}.

The kernel of the following polynomial map is principally generated by a polynomial of the form $w^2+p(u,v)$:
\begin{equation}\label{uvw}
\C[u,v,w]\to\C[x,y], \quad u\mapsto f_1,\ v\mapsto f_2,\ w\mapsto f_3
\end{equation}  
The above map surjects onto $\C[x,y]^H$ and gives the relation
$f_3^2+p(f_1,f_2)=0$.
In this setting we have the following result:

\begin{lem}\label{puv}
Each irreducible factor of $p(u,v)$ corresponds to a component of the ramification locus of $\bC^2/H\to \bC^2/G$, which in turn corresponds to a component of the fixed locus of $A$ acting on $Y$ that has nontrivial intersection with $N$. 
\end{lem}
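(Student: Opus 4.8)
The plan is to analyze the geometry of the covering $\bC^2/H \to \bC^2/G$ and the cover $Y \to Y/A$ separately on the non-exceptional locus, and to match them up via the isomorphism $N \cong \bC^2 \setminus \{(0,0)\}$ of $H$-orbits. First I would recall that $\bC^2/H = \Spec\, \C[x,y]^H$ is realized, via the map \eqref{uvw}, as the hypersurface $\{w^2 + p(u,v) = 0\} \subseteq \C^3_{u,v,w}$, and that $A = G/H \cong \Z/2$ acts on this hypersurface by fixing $u = f_1$ and $v = f_2$ and sending $w = f_3 \mapsto -w$. Hence the ramification locus of $\bC^2/H \to \bC^2/G$ is exactly the fixed locus of this involution, namely $\{w = 0\} = \{p(u,v) = 0, \ w = 0\}$ inside the hypersurface; its irreducible components are in bijection with the irreducible factors of $p(u,v)$ (each factor $q(u,v)$ of $p$ gives the component $\{q(u,v) = 0, w = 0\}$, which is irreducible as $q$ is). This gives the first correspondence in the statement.

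Next I would transport this to $Y$. Outside the exceptional locus, a point of $Y$ is the ideal $I_{(a,b)}$ for $(a,b) \in \bC^2 \setminus \{(0,0)\}$, and this identifies $N$ with $\bC^2 \setminus \{(0,0)\}$. The action of $A$ on $Y$ restricts on $N$ to the action coming from a lift of $A$ to $\GL(2,\bC)$: concretely, $I_{(a,b)}$ is fixed by $A$ precisely when the $H$-orbit of $(a,b)$ is carried to itself by this lift, i.e.\ when $(a,b)$ lies in the preimage in $\bC^2$ of the ramification locus of $\bC^2/H \to \bC^2/G$. (Here one uses that $Y \to \bC^2/H$ is an isomorphism over the complement of the singular point, so the $A$-action on $N$ is identified with the $A$-action on the smooth locus of $\bC^2/H$.) Therefore each component of the ramification locus of $\bC^2/H \to \bC^2/G$ meeting the smooth locus pulls back to a component of $\mathrm{Fix}_A(Y)$ that meets $N$; and conversely any component of $\mathrm{Fix}_A(Y)$ with nontrivial intersection with $N$ arises this way, since on $N$ the fixed locus is exactly this preimage. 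I would phrase this as: under the identification of $N$ with the smooth locus of $\bC^2/H$, we have $\mathrm{Fix}_A(Y) \cap N = \{I_{(a,b)} : f_3(a,b) = 0\}$, and $f_3(a,b) = 0$ together with the relation $f_3^2 + p(f_1,f_2) = 0$ forces $p(f_1(a,b), f_2(a,b)) = 0$, so the components are read off from the factors of $p$.

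The main obstacle I anticipate is bookkeeping about \emph{which} components of the ramification locus actually meet the smooth locus of $\bC^2/H$ — a priori a component of $\{p(u,v) = 0, w = 0\}$ could be entirely contained in the singular locus (which sits over the origin). I would address this by noting that the singular point of $\bC^2/H$ is the single point $u = v = w = 0$, so a positive-dimensional component of the ramification locus cannot be contained in it; since $p$ is not a unit, each of its irreducible factors cuts out a curve in $\{w = 0\}$ passing through but not equal to the origin, hence meeting the smooth locus, hence (after pulling back and passing to $Y$) giving a component of $\mathrm{Fix}_A(Y)$ that genuinely meets $N$. The remaining checks — that distinct factors of $p$ give distinct components, and that irreducibility is preserved under the finite covers in \eqref{setting} — are routine, using that $Y \to \bC^2/H$ and $\bC^2 \to \bC^2/H$ are finite and that the relevant loci are generically reduced along each component.
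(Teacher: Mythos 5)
Your proposal is correct and takes essentially the same route as the paper: identify the ramification locus of $\C^2/H\to\C^2/G$ with the $A$-fixed locus $V(f_3)$, whose coordinate ring is $\C[u,v]/(p(u,v))$, so its components correspond to the irreducible factors of $p$, and then observe that on $N$ the $A$-fixed ideals $I_{(a,b)}$ are exactly those with $f_3(a,b)=0$. The only cosmetic difference is that the paper verifies this last equivalence by an explicit ideal computation (the sum of $I_{(a,b)}$ and its $A$-image contains the constant $f_3(a,b)$), while you invoke the $A$-equivariant isomorphism between $N$ and the punctured $\C^2/H$; both are fine.
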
   

\begin{proof}
For any $(a,b)\in \C^2\setminus \{(0,0)\}$ that satisfies
$f_3(a,b)=0$, we see directly from the definition that the ideal $I_{(a,b)}$ must be fixed under the action of $A$.
Conversely, the sum of $I_{(a,b)}$ and its image under $A$ is:
$$(f_1(x,y)-f_1(a,b),f_2(x,y)-f_2(a,b),f_3(x,y)-f_3(a,b),f_3(x,y)+f_3(a,b)),$$
which contains $f_3(a,b)$. Thus $I_{(a,b)}$ is fixed by $A$ if and only if $f_3(a,b)=0$.

The fixed locus of $A$ inside of $\C^2/H$, or equivalently, the ramification locus of $\C^2/H\to \C^2/G$, has the following coordinate ring:
$$\C[x,y]^H/(f_3)\cong\C[f_1,f_2,f_3]/(f_3^2+p(f_1,f_2),f_3)\cong \C[f_1,f_2]/(p(f_1,f_2)),$$
so the irreducible components of the fixed locus of $A$ inside of $\bC^2/H$ correspond to the irreducible factors of $p(u,v)$.
\end{proof}  

\begin{rmk}\label{rmk.puv}
Since $\bC^2 \to \bC^2/H$ is ramified only at the origin, the branch divisors of $\bC^2 \to \bC^2/G$ are the same as $\bC^2/H \to \bC^2/G$.
  Note that $\bC[f_1,f_2]\cong\bC[x,y]^G$, and the polynomials
$p(u,v)$ are the branch divisors of $\bC^2\to \bC^2/G$; in each case we treat, we see the polynomials $p(u,v)$ are consistent with the branch divisors shown in \Cref{disc.table}.
\end{rmk}

Now, for each reflection group $G$ that appears in \refmain, we will examine the fixed locus of $A$ acting on $Y$ that has nontrivial intersection with $N$.
For each case below, we provide a family of ideals corresponding to points in $Y$ which is flat over $\bC^*$. 
By \cite[III.9.8]{Hartshorne}, there is a unique extension to a flat family over $\mathbb{A}^1$. We identify the point added in the extension as the intersection of the fixed locus of $A$ and the exceptional set $E$.

\subsubsection{$G(2m,m,2)$}\label{int.G2mm2}
We choose the $H$-invariants of $\C[x,y]$ as in \cite[\S13.1]{Ito-Nakamura:1999}
\[
f_1=x^{2m}+y^{2m},\quad
f_2=x^2y^2,\quad
f_3=xy(x^{2m}-y^{2m}).
\]
In this case, the kernel of the map \eqref{uvw} is generated by
$w^2-v(u^2-4v^m)$
and thus by \Cref{puv}, the ramification locus of $\C^2/H\to \C^2/G$ has two components when $m$ is odd and three components when $m$ is even. 

When $m$ is odd, these components are $V(f_2)$ and $V(4f_2^m-f_1^2)$. When $m$ is even, there are three components: $V(f_2)$, $V(2f_2^{m/2}-f_1)$, and $V(2f_2^{m/2}+f_1)$. 

First, we consider the intersection of the component $V(f_2)$ with the exceptional locus; the same argument works for any value of $m$.
Each ideal $I_{(a,b)}$ corresponding to a point in 
$V(f_2)\cap N$ is radical and thus contains $(xy)$, and, furthermore, in the limit as points $(a,b)$ in this component approach $0$, 
any ideal in the intersection of this component with $E$ must contain $xy$.
Any ideal in the intersection of this component with $E$
must also contain $f_1$, $f_2$, and $f_3$ (see \cite[Cor.~9.6,\S10.1]{Ito-Nakamura:1999}).
There is a unique ideal in the exceptional locus
containing $f_1$, $f_2$, $f_3$, and $xy$, which corresponds to the module
$V_2(\rho_1')=(xy)$ (see \Cref{AIsec} for explanation of notation). We identify this point in 
\Cref{prop.Dn}(e) as a fixed point of $A$ in $E$ that is contained in  $E(\rho_1')$.

We treat the remaining components separately for $m$ even or odd.

Consider the case where $m$ is odd. 
The two components of the fixed locus of the $A$-action on $Y$ that intersect with $N\simeq \C^2\setminus \{(0,0)\}$
are $V(f_2)$ and $V(4f_2^m-f_1^2)$.

The points in $N$ are ideals $I_{(a,b)}$ that
are products of the maximal ideals corresponding to points in the orbit of $(a,b)\in\bC^2$. For any $t\in \bC^*$, the ideal $I_{(t,t)}$ is a point in the component $V(4f_2^m-f_1^2)$ and we write it as an ideal in $\bC[x,y]^H$:
\[I_{(t,t)}:=(f_1(x,y)-2t^{2m},f_2(x,y)-t^4,f_3(x,y)).\]
If we furthermore consider the ideals $I_{(t,t)}$ these generate in $\bC[x,y]$ (abusing notation),
because these ideals are radical and contain $f_1^2-4f_2^m=(x^{2m}-y^{2m})^2$, $I_{(t,t)}$ contains $x^{2m}-y^{2m}$. Thus we can write: 
\[I_{(t,t)}=(x^{2m}+y^{2m}-2t^{2m},x^2y^2-t^4,x^{2m}-y^{2m}).\]
We claim that this ideal contains $x^{m+1}-t^2y^{m-1}$ and $y^{m+1}-t^2x^{m-1}$.
To see this, we first note that since $x^2y^2-t^4\in I_{(t,t)}$, all solutions to $x^{m-1}y^{m-1}-t^{2m-2}$ are contained in the vanishing of $I_{(t,t)}$.
Since $I_{(t,t)}$ is radical, it must contain $x^{m-1}y^{m-1}-t^{2m-2}$, and thus the following equality implies that $x^{m-1}(x^{m+1}-t^2y^{m-1})\in I_{(t,t)}$:
\[x^{m-1}(x^{m+1}-t^2y^{m-1})+t^2(x^{m-1}y^{m-1}-t^{2m-2})=x^{2m}-t^{2m}\in I_{(t,t)}.
\]
Because $x^{m-1}$ is not contained in any of the maximal ideals which intersect to give $I_{(t,t)}$ (note $I_{(a,0)}$, $I_{(0,b)}$ are in $V(f_2)$) this implies $x^{m+1}-t^2y^{m-1}\in I_{(t,t)}$. By a similar argument, we see $y^{m+1}-t^2x^{m-1}\in I_{(t,t)}$. Then we see that  $\lim_{t\to0}I_{(t,t)}$ must contain
$x^{m+1}$ and $y^{m+1}$ as well as
$f_1$, $f_2$, and $f_3$.  There is a unique ideal in the exceptional locus with this property,
which corresponds to the module $V_{m+1}'(\rho_m)=(x^{m+1},y^{m+1})$
in the notation of \Cref{AIsec}.
In \Cref{prop.Dn}(e), we identify this point as a fixed point of $A$ in $E$ that is contained in  $E(\rho_m)$.

Now, let $m$ be even.
The three components of the fixed locus of $A$ on $Y$ that intersect with $N\simeq \C^2\setminus \{(0,0)\}$
are $V(f_2)$, $V(2f_2^{m/2}-f_1)$, and $V(2f_2^{m/2}+f_1)$.

For any $t\in \bC^*$, the ideal $I_{(t,t)}$ is a point in the component
$V(2f_2^{m/2}-f_1)$ and we write it as an ideal in $\bC[x,y]^H$:
\[I_{(t,t)}=(f_1(x,y)-2t^{2m},f_2(x,y)-t^4,f_3(x,y)).\]
Consider the ideals $I_{(t,t)}$ these generate in $\bC[x,y]$,
because this ideal is radical and contains $f_1-2f_2^{m/2}=(x^{m}-y^{m})^2$, it contains $x^{m}-y^{m}$. Thus we can write our ideals
\[I_{(t,t)}=(x^{2m}+y^{2m}-2t^{2m},x^2y^2-t^4,x^{m}-y^{m}).\]
Then, $\lim_{t\to0}I_{(t,t)}$ must contain $x^m-y^m$ as well as $f_1$, $f_2$, $f_3$.

By a similar argument,
noting that $2f_2^{m/2}+f_1=(x^m+y^m)^2$,
the intersection of component $V(2f_2^{m/2}+f_1)$ with the exceptional locus is an ideal containing $x^m+y^m$ as well as $f_1$, $f_2$, and $f_3$. 

There are unique ideals in the exceptional locus
that contain $f_1,f_2,f_3,x^m-y^m$ and $f_1,f_2,f_3,x^m+y^m$, respectively,
though which ideal depends on whether $m$ is divisible by~$4$. 
If $m$ is not divisible by $4$, these ideals correspond to the modules
$V_m(\rho_{m+1}')=(x^m-i^{m+2}y^m)$ and $V_m(\rho_{m+2}')=(x^m+i^{m+2}y^m)$ (see \Cref{AIsec} for notation), respectively, and vice versa if $4|m$.

We identify these points in 
\Cref{prop.Dn}(e) as fixed points of $A$ in $E$ that are contained in  $E(\rho_{m+1}')$ and $E(\rho_{m+2}')$.

\subsubsection{$G_{12}$}\label{int.G12}
We choose the invariants of $\C[x,y]^H$ as in \cite{Slodowy:1980}, which 
coincides with \cite[\S14]{Ito-Nakamura:1999} except for the choice of $f_3$ (for unexplained notation see Appendix \ref{gtw.comp}):
\begin{align*}
f_1&=x^5y-xy^5=p_1p_2p_3\\
f_2&=x^8+14x^4y^4+y^8=\phi\psi=(p_2^2+4\omega p_3^2)(p_2^2+4\omega^2 p_3^2)\\
f_3&=x^{12}-33x^8y^4-33x^4y^8+y^{12}
\end{align*}
The kernel of the map \eqref{uvw} is generated by
$w^2-(v^3-108u^4)$ 
and thus by \Cref{puv}, the ramification locus of $\C^2/H\to \C^2/G$ has one component, corresponding to one component of the fixed locus of the action of $A$ on $Y$. Its intersection with $N\simeq \C^2\setminus \{(0,0)\}$
is $V(f_2^3-108f_1^4)$.

Let $\varepsilon:=e^{2\pi i/8}$. For any $t\in \bC^*$, the ideal $I_{(t,\varepsilon t)}$ is a point in the component $V(f_2^3-108f_1^4)$:
\[
I_{(t,\varepsilon t)}:=(f_1-2\varepsilon t^6,f_2+12t^8,f_3).
\]
By \Cref{G12_A_summary}(c), the exceptional locus of $\HHilb(\bC^2)$ has a single isolated fixed point under the $A$-action on $E(\rho_3)$ corresponding to the following $H$-module:
\[V(I)=((x^2-y^2)(-\omega^2\varphi+\psi),xy(-\varphi+\psi),(x^2+y^2)(-w\varphi+\psi),f_1,f_2,f_3).\]
We are able to verify computationally using \texttt{macaulay2} in the code file \texttt{FixedLocus.m2} \cite{code},
for each $t$
and each generator of $V(I)$
$I_{(t,\varepsilon t)}$ contains an element that is the sum of 
the generator of $V(I)$ and an element that is a multiple of $t$. 
Thus $\lim_{t\to 0}I_{(t,\varepsilon t)}$ must contain all of the generators listed in $V(I)$, hence it is the point in $E$ corresponding to $V(I)$.

\subsubsection{$G_{13}$}\label{int.G13}
We choose the invariants of $\C[x,y]^H$ as in \cite{Slodowy:1980}, which 
differs from but is consistent with that in \cite[\S15]{Ito-Nakamura:1999} in this case:
\[
f_1=(x^5y-xy^5)^2,\quad
f_2=x^8+14x^4y^4+y^8,\quad
f_3=(x^{12}-33x^8y^4-33x^4y^8+y^{12})(x^5y-xy^5).
\]
In this case, the kernel of the map \eqref{uvw} is generated by
$w^2-u(v^3-108u^2)$,
and thus by \Cref{puv}, the ramification locus of $\C^2/H\to \C^2/G$ has two components, corresponding to two components of the fixed locus of the action of $A$ on $Y$. Their intersections with $N\simeq \C^2\setminus \{(0,0)\}$
are $V(f_1)$ and $V(f_2^3-108f_1^2)$.

We consider the intersection of the first component of the fixed locus with $E$. Each ideal $I_{(a,b)}$ corresponding to a point in 
$V(f_1)\cap N$ is radical and thus contains $(x^5y-xy^5)$.
In the limit as points $(a,b)$ in this component approach $0$, we see that 
any ideal in the intersection of this component with $E$ must contain $(x^5y-xy^5)$ as well as 
$f_1$, $f_2$, and $f_3$. There is a unique ideal in the exceptional locus with this property, namely, in the notation of
\Cref{AIsec},
$V_6(\rho_1')=(x^5y-xy^5)$. We identify this point in 
\Cref{G13_A_summary}(c) as a fixed point of $A$ in $Y\cap E$ that is contained in  $E(\rho_1')$.

Similarly, each ideal
$I_{(a,b)}$ corresponding to a point in 
$V(f_2^3-108f_1^2)\cap N$ is radical and thus contains $x^{12}-33x^8y^4-33x^4y^8+y^{12}$. The intersection of this component of the fixed locus of $A$ in $Y$ with $E$ must be the unique ideal $V_8(\rho_2'')=(x^{12}-33x^8y^4-33x^4y^8+y^{12})$
that we identify in \Cref{G13_A_summary}(c) as a point on $E(\rho_2'')$.

\subsubsection{$G_{22}$}\label{int.G22}
We choose the following invariants of $\C[x,y]^H$ (see \cite{Bannai:1976}, \cite[p.~55]{Klein}, \cite{Slodowy:1980}):
\begin{align*}
f_1&=xy(x^{10}+11x^5y^5-y^{10})\\
f_2&= x^{20}-228x^{15}y^5+494x^{10}y^{10}+228x^5y^{15}+y^{20}\\
f_3&=x^{30}+522x^{25}y^5-10005x^{20}y^{10}-10005x^{10}y^{20}-522x^5y^{25}+y^{30}
\end{align*}
The kernel of the map \eqref{uvw} is generated by
$w^2-(1728u^5+v^3)$, 
and thus by \Cref{puv}, the ramification locus of $\C^2/H\to \C^2/G$ has one component, corresponding to one component of the fixed locus of the action of $A$ on $Y$. Its intersections with $N\simeq \C^2\setminus \{(0,0)\}$
is $V(1728 f_1^5+f_2^3)$.

Let $\gamma:=e^{2\pi i/20}$. For any $t\in \bC^*$, the ideal $I_{(t,\gamma t)}$ is a point in the component $V(1728 f_1^5+f_2^3)$:
\[
I_{(t,\gamma t)}:=(f_1- (2+11i)\gamma t^{12},f_2+(492+456i)t^{20},f_3)
\]
By \Cref{G22_A_summary}(c), the exceptional locus of $\HHilb(\bC^2)$ has a single isolated fixed point on $E(\rho_3)$ corresponding to the following $H$-module:
\[V_{14}(\rho_3'')=(x^{14}-14x^9y^5+49x^4y^{10},7x^{12}y^2-48x^7y^7-7x^2y^{12}, 49x^{10}y^4+14x^5y^9+y^{14})\]
We are able to verify in our code file that 
for each $t$
and each generator of $V_{14}(\rho_3'')$, 
$I_{(t,\gamma t)}$ contains an element that is the sum of 
the generator of $V(I)$ and an element that is a multiple of $t$. 
Thus $\lim_{t\to 0}I_{(t,\varepsilon t)}$ must contain all of the generators listed in $V_{14}(\rho_3'')$, hence it is the point in $E$ corresponding to $V_{14}(\rho_3'')$.

\subsubsection{Smoothness of $Y/A$}\label{smoothYA}
We now use the above results to show $Y/A$ is smooth and draw conclusions about the branch locus of $Y\to Y/A$.

\begin{prop}\label{no_isolated}
Let $G\subseteq\GL(2,\bC)$ be a complex reflection group as in \refmain.
The fixed locus of the action of $A$ on $Y$ does not have any isolated fixed points.
\end{prop}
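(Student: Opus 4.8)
The plan is to combine the two separate analyses carried out above: on one hand the computations in the appendix (referenced via \Cref{prop.Dn}(e), \Cref{G12_A_summary}(c), \Cref{G13_A_summary}(c), \Cref{G22_A_summary}(c)) that locate all fixed points of $A$ \emph{inside the exceptional locus} $E$, and on the other hand the limit computations in Sections~\ref{int.G2mm2}--\ref{int.G22} that exhibit, for each component of the fixed locus of $A$ on $\bC^2/H$, a one-parameter family of ideals $I_{(a,b)}\in N$ fixed by $A$ whose flat limit as the parameter tends to $0$ is a specified point of $E$. The point is that every fixed point found in $E$ is shown, by one of these limit computations, to lie in the closure of a positive-dimensional fixed family meeting $N$; hence none of them is isolated, and since $A$ acts freely on $N$ outside these families, there are no isolated fixed points at all.

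First I would set up the dichotomy: a fixed point $y\in Y^A$ either lies in $N$ or in $E$. If $y\in N$, then by \Cref{puv} (and its proof) $y=I_{(a,b)}$ with $f_3(a,b)=0$, so $y$ lies on the fixed-locus component corresponding to some irreducible factor of $p(f_1,f_2)$; this component is a curve in $\bC^2/H$, pulled back to a curve in $N\subseteq Y$, so $y$ is certainly not isolated. If $y\in E$, then by the appendix computations $y$ is one of the finitely many $A$-fixed points of $E$, each of which is either already non-isolated within $E$ (lying on an $A$-fixed component of $E$ of positive dimension) or is isolated within $E$; for the latter "bad" points it suffices to produce an irreducible curve in $Y^A$ through $y$ meeting $N$.

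The heart of the argument is therefore the case-by-case verification, which is exactly what Sections~\ref{int.G2mm2}--\ref{int.G22} supply: for each group and each branch component $V(q(f_1,f_2))$ with $q\mid p$, I would take the explicit flat family $\{I_{(a(t),b(t))}\}_{t\in\bC^*}$ lying in that component, invoke \cite[III.9.8]{Hartshorne} to get a unique flat extension over $\mathbb{A}^1$, and identify the new fiber at $t=0$ with the named point of $E$ (for $G(2m,m,2)$ via radicality arguments forcing membership of polynomials like $x^{m+1}-t^2y^{m-1}$; for $G_{12}$, $G_{13}$, $G_{22}$ partly by the \texttt{macaulay2} verification in \cite{code}). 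Each such $\overline{\{I_{(a(t),b(t))}\}}$ is an irreducible curve contained in $Y^A$ (being a limit of $A$-fixed ideals, its special fiber is $A$-fixed) passing through the given exceptional fixed point and meeting $N$, so that point is not isolated. Running through all the cases exhausts the list of $A$-fixed points of $E$, and together with the $N$ case this proves the proposition.

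The main obstacle is bookkeeping rather than any single hard estimate: one must be sure that the exceptional fixed points enumerated in the appendix are \emph{all} of $Y^A\cap E$, and that every one of them — in particular the isolated-in-$E$ ones flagged in \Cref{prop.Dn}(e), \Cref{G12_A_summary}(c), \Cref{G13_A_summary}(c), \Cref{G22_A_summary}(c) — actually appears as the $t\to 0$ limit of one of the families constructed above. The matching of limits to named $H$-submodules $V(I)$ is the delicate step, especially for $G_{12}$ and $G_{22}$ where it rests on the computer verification that $I_{(a(t),b(t))}$ contains, for each listed generator of $V(I)$, that generator plus a multiple of $t$; granting those computations, the conclusion is immediate.
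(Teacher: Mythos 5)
Your proposal is correct and follows essentially the same route as the paper: the paper's proof of \Cref{no_isolated} simply cites the limit computations of Sections~\ref{int.G2mm2}--\ref{int.G22}, which (together with \Cref{puv} for fixed points in $N$ and the appendix propositions enumerating the fixed points in $E$) show that every $A$-fixed point lies on a codimension-one component of the fixed locus. The bookkeeping you flag---matching each isolated-in-$E$ fixed point to the $t\to 0$ limit of an explicit flat family, partly via the \texttt{macaulay2} verifications---is exactly how the paper closes the argument.
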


\begin{proof}
By the arguments shown in sections \ref{int.G2mm2}-\ref{int.G22}, 
each fixed point of the action of $A$ on $Y$ is contained in a component of codimension $1$ and in particular there are no isolated fixed points.
\end{proof}  

By the Chevalley--Shephard--Todd Theorem \cite{Chevalley,Shephard_Todd}, \Cref{no_isolated} implies the following corollary.

\begin{cor}\label{YAsmooth}
The quotient $Y/A$ is smooth.
\end{cor}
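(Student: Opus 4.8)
The plan is to deduce smoothness of $Y/A$ from Cartan's classical lemma on linearization of finite group actions at fixed points, combined with the fact (just established in \Cref{no_isolated}) that the fixed locus of $A$ on $Y$ has no isolated points. Concretely, $Y$ is a smooth surface and $A \cong \Z/2$ acts on it; away from the fixed locus the quotient map $Y \to Y/A$ is \'etale, so $Y/A$ is automatically smooth there. Thus the only issue is at a point $y \in Y$ fixed by the nontrivial element $\sigma \in A$.

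First I would invoke Cartan's lemma: since $A$ is finite acting on the smooth variety $Y$ and fixes $y$, the action can be linearized near $y$, i.e. there are local analytic (or formal) coordinates in which $\sigma$ acts linearly on the tangent space $T_yY \cong \C^2$. Because $\sigma^2 = 1$, this linear action is diagonalizable with eigenvalues in $\{+1,-1\}$, so up to change of coordinates $\sigma$ acts on $(s,t)$ either as (i) $(s,t)\mapsto(-s,-t)$, (ii) $(s,t)\mapsto(s,-t)$, or (iii) trivially (the case (iii) cannot occur since $A$ acts effectively, but even if it did the quotient would be smooth). In case (ii) the quotient $\C^2/\langle\sigma\rangle = \Spec\C[s,t^2]$ is again $\A^2$, hence smooth. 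In case (i) the quotient is the $A_1$-singularity $\Spec\C[s^2,st,t^2]$, which is \emph{not} smooth; this is the case we must rule out.

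The key point is that case (i) is exactly the case in which $y$ is an \emph{isolated} fixed point of $\sigma$ locally (the fixed locus is cut out by $s=t=0$), whereas in case (ii) the fixed locus is the curve $\{t=0\}$, which passes through $y$ and is one-dimensional. So I would argue: if $Y/A$ failed to be smooth at the image of some fixed point $y$, then by the linearization we would be in case (i), forcing $y$ to be isolated in the fixed locus of $A$; this contradicts \Cref{no_isolated}. Hence every fixed point falls in case (ii) (or the vacuous case (iii)), and at each such point $Y/A$ is smooth; combined with \'etaleness off the fixed locus, $Y/A$ is smooth everywhere.

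I do not expect any serious obstacle here — the argument is short once \Cref{no_isolated} is in hand, which is why the bulk of the work in the preceding sections went into the explicit fixed-point computations. The only point requiring a little care is making sure the dichotomy ``isolated fixed point $\iff$ case (i)'' is stated correctly for a $\Z/2$-action on a smooth \emph{surface}: in dimension two a $\Z/2$-fixed point in its local linearized model is either isolated (scalar $-1$ action, quotient $A_1$) or lies on a smooth fixed curve (reflection, quotient smooth), with no other possibilities, so the conclusion is clean. One should also note that Cartan's linearization is an analytic statement, but smoothness of $Y/A$ can be checked analytically-locally, so this causes no difficulty.
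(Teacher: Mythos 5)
Your argument is correct and is exactly the paper's route: the paper deduces the corollary from \Cref{no_isolated} by citing Cartan's linearization theorem, and your write-up simply spells out the case analysis (reflection versus scalar $-1$ action) that this citation encapsulates. No differences worth noting.
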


\begin{rmk}
In \cite{Pot18}, smoothness of $Y/A$ in the $G=G(m,m,2)$ case is shown by analyzing affine charts of the toric minimal resolution of $\C^2/G$, which is not available in the cases we treat.
\end{rmk}  

We may further identify the geometry of the components of the branch locus of $Y\to Y/A$:

\begin{cor}\label{smooth_lemma}
The branch locus of $Y\to Y/A$ that extends outside the exceptional locus
of $Y/A\to \C^2/G$ 
is isomorphic to a disjoint union of affine lines. 
\end{cor}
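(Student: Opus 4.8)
The plan is to leverage the explicit description of the branch-locus components already obtained in \S\ref{int.G2mm2}--\S\ref{int.G22}. By \Cref{puv} together with \Cref{rmk.puv}, the part of the fixed locus of $A$ on $Y$ meeting $N$ consists of a component for each irreducible factor of $p(u,v)$, and the fixed locus inside $\bC^2/H\cong\C^2/G$ has coordinate ring $\C[f_1,f_2]/(p(f_1,f_2))$. The first step is to observe that each such component of the fixed locus of $A$ on $Y$ is the \emph{strict transform}, under the crepant resolution $Y\to\C^2/H$, of the corresponding component $V(q)\subseteq\C^2/G$ of the ramification locus, where $q$ is an irreducible factor of $p$; this is because the fixed component maps isomorphically onto $V(q)$ over $N$ (the resolution $Y\to\C^2/H$ is an isomorphism over the complement of the origin), and by \cite[III.9.8]{Hartshorne} is its closure in $Y$.

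The second, and main, step is to identify each irreducible factor $q(u,v)$ of $p(u,v)$ appearing in our cases and to check that the curve $V(q)\subseteq\Spec\C[u,v]$ is smooth away from the origin, hence that its strict transform in $Y$ restricted to the non-exceptional part is smooth; then to check that the single point where the strict transform meets $E$ (identified explicitly in \S\ref{int.G2mm2}--\S\ref{int.G22} as a smooth point of the appropriate component $E(\rho)\cong\bP^1$ of the exceptional divisor) is also a smooth point of the strict transform, and that the strict transform is an affine line there. Concretely: for $G(2m,m,2)$ the factors of $p=v(u^2-4v^m)$ are $v$ (giving a smooth line) and $u^2-4v^m$, which away from the origin has nonvanishing differential; for $G_{12}$, $p=v^3-108u^4$, a cuspidal curve smooth off the origin; for $G_{13}$, $p=u(v^3-108u^2)$, a line together with a cusp; for $G_{22}$, $p=1728u^5+v^3$, again a cusp. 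In each case the curve $V(q)\setminus\{0\}$ is smooth and, being an affine plane curve, its normalization is an affine line; since $Y\to\C^2/H$ is an isomorphism over $N$, the open part of the strict transform lying in $N$ is isomorphic to $V(q)\setminus\{0\}$, and adding back the single point over the origin yields a curve whose normalization is $\mathbb A^1$.

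The third step is to upgrade ``normalization is $\mathbb A^1$'' to ``is isomorphic to $\mathbb A^1$'', i.e.\ to verify the strict transform is itself smooth, including at its intersection with $E$. For this I would use that $Y$ is a smooth surface and that the strict transform $C$ of $V(q)$ is a curve meeting the exceptional divisor at a point lying on exactly one component $E(\rho)\cong\bP^1$ and not on any intersection point $P(\rho,\rho')$ (this follows from the explicit computations: in each case the limiting ideal is identified as a point of a single $E(\rho)$, distinct from the finitely many nodes of $E$). A curve on a smooth surface whose only possible singularity is a single point, at which it meets a $(-2)$-curve, is smooth there provided the intersection is transverse; transversality can be read off from the local computation of the limiting ideal (the strict transform carries one extra generator beyond $f_1,f_2,f_3$, e.g.\ $xy$, $x^{m+1}-t^2y^{m-1}$, etc., which cuts out a reduced point transverse to $E(\rho)$). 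Hence $C$ is a smooth rational curve meeting $E$ in one point, so $C\setminus E\cong\mathbb A^1$ forces $C\cong\mathbb A^1$ as the open part is already an affine line with one smooth point added. Taking the disjoint union over the (finitely many) irreducible factors $q$ of $p$ completes the proof.

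The hard part will be the transversality/smoothness check at the one point where each strict transform meets the exceptional divisor: away from $E$ everything is immediate from the isomorphism $Y\setminus E\cong(\C^2\setminus\{0\})/H$ and the explicit plane-curve equations, but near $E$ one must either appeal carefully to the Ito--Nakamura local coordinates on $Y$ recalled in \Cref{IN.main} to see that the extra generator of the limiting ideal gives a local coordinate complementary to $E(\rho)$, or argue abstractly that a curve on a smooth surface which is analytically irreducible at a point and set-theoretically meets a smooth curve there must be smooth if it is not tangent --- and rule out tangency using the flat family. I expect the cleanest route is the explicit one, reusing the computations already recorded in the appendix.
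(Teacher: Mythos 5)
Your overall skeleton (identify each component as the strict transform of a branch curve of $\bC^2/H\to\bC^2/G$, use the explicit equations from \S\ref{int.G2mm2}--\S\ref{int.G22}, and observe that each such plane curve normalizes to $\bA^1$) matches the second half of the paper's argument. But the step you yourself flag as ``the hard part'' --- smoothness of the strict transform at its single point of contact with the exceptional divisor --- is exactly the content of the corollary, and your proposal leaves it open. The flat-family computations in \S\ref{int.G2mm2}--\S\ref{int.G22} only identify the limiting ideal set-theoretically; they do not give local intersection multiplicities, so transversality cannot simply be ``read off'' from them. Moreover, your abstract fallback is incorrect as stated: a curve that is analytically irreducible at a point and meets a smooth curve away from its tangent direction need not be smooth there (a cusp $y^2=x^3$ meets the line $x=0$ in a direction different from its tangent cone, yet is singular), so one would genuinely need local intersection multiplicity one, which your sketch does not establish.

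The paper sidesteps this local analysis entirely. Since $Y/A$ is smooth (\Cref{no_isolated} plus Cartan, i.e.\ \Cref{YAsmooth}), one has $(Y/A)^{\can}\cong Y/A$, and \Cref{corollary_quotient_stack_is_root_stack} (Geraschenko--Satriano) then says that $[Y/A]\to Y/A$ is a root stack along \emph{smooth} connected divisors; in particular every component of the branch locus of $Y\to Y/A$ is smooth, with no case-by-case transversality check. A smooth curve mapping birationally onto the corresponding branch component of $\bC^2/H\to\bC^2/G$ is its normalization, and the explicit equations (a line, $u^2-4v^m$, $v^3-108u^4$, $1728u^5+v^3$, etc.) all normalize to affine lines --- the part of your plan that is correct. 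So to complete your argument you should replace the unproven transversality step by an appeal to \Cref{YAsmooth} and \Cref{corollary_quotient_stack_is_root_stack}, or else actually carry out the local computation in Ito--Nakamura coordinates, which the paper deliberately avoids.
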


\begin{proof}
By \Cref{corollary_quotient_stack_is_root_stack} (cf.~\cite[Cor.~5.3.4]{Pot18}),
since $Y/A$ is smooth and hence coincides with $(Y/A)^{\can}$, the branch locus of $Y\to Y/A$ consists of smooth divisors.
  
Let $B$ be an irreducible component of the branch locus of $Y\to Y/A$ that extends outside the exceptional locus of $Y/A\to \C^2/G$.
Consider the diagram \eqref{setting} in the Conventions section. Since it commutes, $B$ is the strict transform (and therefore the normalization) of a branch divisor of $\bC^2/H \to \bC^2/G$.

Each component of the branch locus of $\bC^2/H\to \bC^2/G$ is given in terms of an explicit equation in sections \ref{int.G2mm2}-\ref{int.G22}. Our equations all normalize to lines.
\end{proof}

\section{Semiorthogonal Decompositions of Equivariant Derived Categories}\label{sec:SOD}

An important tool in our proof of \refmain{} is a collection of results concerning semiorthogonal decompositions of equivariant derived categories. These results were first combined and studied by Potter in his thesis \cite[Chapter 6]{Pot18}. There are essentially three moving pieces to this result. The first result is a semiorthogonal decomposition of the derived category of a root stack, proven in \cite{IU15} and \cite{BLS16} independently. The second result is another semiorthogonal decomposition of the derived category of the canonical stack associated to a surface. Finally, the third is a description of a quotient stack as an iterated root stack along the branching divisors of a group action, proven in \cite{Geraschenko-Satriano17}. Combined, these results allow us to realize the derived category of equivariant sheaves as the derived category of sheaves on an iterated root stack over the canonical stack of a surface, allowing us to obtain the claimed semiorthogonal decomposition using the first two results mentioned above.

We first remind the reader about the notion of a root and canonical stack. Then, we examine the topic of semiorthogonal decompositions. Finally, we assemble these ideas to outline the method we will use to prove \refmain.

\subsection{Root and Canonical Stacks} The original definition of a root stack was given independently in \cite{Cad07} and \cite{AGV08}, while the notion of a canonical stack is implicit in \cite[2.9, proof of 2.8]{Vistoli-Intersection_theory_on_algebraic_stacks_and_moduli_spaces} (see also \cite{FantechiEtAl-Smooth_toric_DM_stacks} for discussion). We will only need the basics of both, and refer the readers to the above sources for more details. 

We begin the discussion with the basics of root stacks. Let $X$ be a smooth Deligne--Mumford stack and let $\mathbf{D} = (D_1,\ldots,D_n)$ be $n$ effective Cartier divisors on $X$. Further, let $\mathbf{r} = (r_1,\ldots,r_n) \in \Z_{>0}^n$ be a tuple of positive integers. Recall that $\mathbf{D}$ determines a morphism $X \to [\mathbb{A}^n/\mathbb{G}_m^n]$, which we also denote by $\mathbf{D}$, and let $\theta_{\mathbf{r}}$ be the morphism $[\mathbb{A}^n/\mathbb{G}_m^n] \to [\mathbb{A}^n/\mathbb{G}_m^n]$ determined by $x_i\mapsto x_i^{r_i}$, and $\lambda_i \mapsto \lambda_i^{r_i}$.

A definition of the root stack of $X$ along $\mathbf{D}$ in terms of its generalized points is straightforward but unnecessary for our purposes (cf. \cite[Remark 2.2.2]{Cad07}). We alternatively define the root stack of $X$ along $\mathbf{D}$ as the fiber product 
\begin{center}
  \begin{tikzcd}
X[\sqrt[\mathbf{r}]{\mathbf{D}}] \arrow[r] \arrow[d,]  & \left[\mathbb{A}^n/\mathbb{G}_m^n \right] \arrow[d, "\theta_\mathbf{r}"] \\
X \arrow[r, "\mathbf{D}"] &  \left[\mathbb{A}^n/\mathbb{G}_m^n\right] 
\end{tikzcd}
\end{center}

\begin{rmk} \label{remark_embedding of root stack strict transform}
  The root stack behaves much like the notion of a blow-up in classical birational geometry. Indeed if $D$ is the union of the $D_i$, then the restriction $X[\sqrt[\mathbf{r}]{\mathbf{D}}]|_{X\setminus D}$ is isomorphic to $X \setminus D$, however its restriction to $D$ is much more interesting (see \cite{Cad07}). Intuitively,
in the case where $\mathbf{r}=(r)$ and $\mathbf{D}=(D)$, 
the $\mathbf{r}$th root stack of $X$ along $\mathbf{D}$ modifies $X$ only
on 
$D$, resulting in a stack with stabiliser groups of $\mu_r$ along $D$. The objects of the root stack serve as $r$th roots of the line bundle $\cO_X(D)$.
\end{rmk}

\begin{rmk}\label{remark_iterated root stacks}
  It will be useful later on to recognize that by definition of the root stack $X [ \sqrt[\mathbf{r}]{\mathbf{D}}]$, there is a canonical isomorphism $$X [ \sqrt[\mathbf{r}]{\mathbf{D}}] \cong  X [\sqrt[r_1]{D_1}]  [\sqrt[r_2]{D_2}] \cdots [\sqrt[r_n]{D_n}].$$ In other words, we may take root stacks iteratively.
\end{rmk}

Canonical stacks were first studied by Vistoli in \cite{Vistoli-Intersection_theory_on_algebraic_stacks_and_moduli_spaces} as a means of attaching a smooth Deligne--Mumford stack to a scheme with tame quotient singularities. Our primary reference is \cite[\S 4]{FantechiEtAl-Smooth_toric_DM_stacks}

\begin{dfn}
    Let $X$ be a smooth Deligne--Mumford stack with coarse moduli space $Y$. We say that $X$ is canonical if the locus where the map $X \to Y$ is not an isomorphism has codimension at least two. 
\end{dfn}
\begin{rmk}
    In \cite[Proposition 2.8, 2.9]{Vistoli-Intersection_theory_on_algebraic_stacks_and_moduli_spaces}, it was shown that any scheme of finite type over a field with tame quotient singularities is the coarse moduli space of a canonical stack. The question of uniqueness is resolved by \cite[Theorem 4.6]{FantechiEtAl-Smooth_toric_DM_stacks}, where canonical stacks are shown to be terminal with respect to dominant, codimension-preserving morphisms to their coarse moduli spaces. This property is stable under base change by \'etale morphisms, thus via descent every algebraic stack with tame quotient singularities has a canonical stack. We denote the canonical stack associated to such a stack $X$ by $X^{\can}$.
\end{rmk}

\begin{rmk}\label{rmk.smooth}
    As a byproduct of the definition of canonical stacks, one can check directly that if $Y^{\can}$ is a smooth canonical Deligne--Mumford stack with coarse moduli space $Y$, the locus where $\pi:Y^{\can} \to Y$ is an isomorphism is precisely $\pi^{-1}(Y_{sm})$, where $Y_{sm}$ is the smooth locus of $Y$. Clearly, if $Y$ is itself smooth, then $Y^{\can} \cong Y$ \cite[\S 4]{FantechiEtAl-Smooth_toric_DM_stacks}.
\end{rmk}

The main result concerning root and canonical stacks is a result of Geraschenko and Satriano, which shows that under suitable assumptions, a stack can be built from its coarse moduli space by repeating the canonical and root stack constructions.

\begin{thm}[{\cite[Theorem 1]{Geraschenko-Satriano17}}]\label{theorem_coursemoduli_is_rootstack_morphism}
    Let $X$ be a smooth separated tame Deligne--Mumford stack with trivial generic stabilizer. Let $Y$ be its coarse moduli space and $Y^{\can}$ the canonical stack associated to $Y$. Let $D \subset Y$ be the branch divisor of the map $\pi:X \to Y$, and $\mathcal{D}$ the pullback of $D$ to $Y^{\can}$. Let $r_i$ be the ramification index of $\pi$ over the irreducible component $\mathcal{D}_i$ of $\mathcal{D}$, then for $\mathbf{r}=(r_1,\ldots,r_n)$ and $\mathbf{D} = (\mathcal{D}_1,\ldots,\mathcal{D}_n)$, the stack $Y^{\can}[\sqrt[\mathbf{r}]{\mathbf{D}}]$ has tame quotient singularities and the map $\pi$ factors as $$X \cong \left( Y^{\can}\left[\sqrt[\mathbf{r}]{\mathbf{D}}\right] \right)^{\can} \to Y^{\can} \left[ \sqrt[\mathbf{r}]{\mathbf{D}} \right] \to Y^{\can} \to Y.$$
\end{thm}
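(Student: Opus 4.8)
The plan is to reduce the statement to an étale-local question and then to an explicit reflection-group computation.

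\textbf{Reduction to a linear local model.} First I would observe that the formation of coarse moduli space, of canonical stack, and of root stacks along effective Cartier divisors all commute with étale base change on $Y$, and that both the claim that $Y^{\can}[\sqrt[\mathbf{r}]{\mathbf{D}}]$ has tame quotient singularities and the asserted isomorphism over $Y$ may be checked étale-locally on $Y$. So it suffices to work étale-locally near a point $y\in Y$. By the local structure theorem for tame Deligne--Mumford stacks, étale-locally over $y$ one has $X\cong[U/\Gamma]$ for a finite group $\Gamma$ of order prime to the characteristic and $U$ a smooth affine scheme; triviality of the generic stabilizer makes the $\Gamma$-action faithful and generically free. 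Since $\Gamma$ is linearly reductive, Cartan's lemma (equivalently a Luna slice argument) linearizes the action at a fixed point, so after further étale localization $X\cong[V/\Gamma]$ for a faithful $\Gamma$-representation $V$ with $\dim V=\dim Y$, and $Y=\Spec\C[V]^\Gamma=V/\Gamma$.

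\textbf{Identifying $Y^{\can}$ and the root stack.} Let $R\trianglelefteq\Gamma$ be the normal subgroup generated by the pseudoreflections of $\Gamma$. By Chevalley--Shephard--Todd (valid since $|\Gamma|$ is prime to the characteristic), $W:=V/R$ is isomorphic to affine space, and $\bar\Gamma:=\Gamma/R$ acts on $W$ generically freely and \emph{without} pseudoreflections (any element of $\Gamma$ fixing a divisor generically is itself a pseudoreflection, hence lies in $R$). Thus $[W/\bar\Gamma]\to W/\bar\Gamma=Y$ is an isomorphism outside a locus of codimension $\ge 2$, so $[W/\bar\Gamma]\cong Y^{\can}$. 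The branch divisor $D$ of $\pi$ is the image of the discriminant of the reflection group $R$: its pullback to $Y^{\can}=[W/\bar\Gamma]$ is $[\Delta/\bar\Gamma]$, where $\Delta\subset W$ is the $R$-discriminant with components $\Delta_j$ over which $V\to W$ ramifies with index $r_j$, the (cyclic, by tameness) order of the pointwise stabilizer of the associated reflection hyperplane. Since root stacks commute with the quotient by a group preserving the divisors,
\[
Y^{\can}[\sqrt[\mathbf{r}]{\mathbf{D}}]\;\cong\;\bigl[\,W[\sqrt[\mathbf{r}]{\Delta}]\ \big/\ \bar\Gamma\,\bigr].
\]
Root stacks of a smooth stack along Cartier divisors have tame quotient singularities, and these persist under the finite quotient by $\bar\Gamma$; this yields the quotient-singularity assertion.

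\textbf{The key local computation and assembly.} The heart of the proof is the identity
\[
\bigl(W[\sqrt[\mathbf{r}]{\Delta}]\bigr)^{\can}\;\cong\;[V/R].
\]
Both sides carry morphisms to $W=V/R$; over the complement of $\operatorname{Sing}(\Delta)$ (codimension $\ge 2$ in $W$) each is étale-locally the quotient (resp.\ root) stack of a single cyclic pseudoreflection group, and the two agree there, including the $\mu_{r_j}\cong\Z/r_j$ stabilizers along $\Delta_j$. As $[V/R]$ is smooth and $[V/R]\to W$ is codimension-preserving, the terminality of the canonical stack among smooth Deligne--Mumford stacks admitting such a morphism identifies $[V/R]$ with $(W[\sqrt[\mathbf{r}]{\Delta}])^{\can}$; concretely, passing to the canonical stack replaces the abelian stabilizer $\prod_j\mu_{r_j}$ produced by the iterated root stack along a deeper stratum of $\Delta$ by the (possibly nonabelian) reflection subgroup of $R$ fixing that stratum in $V$. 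This identification is $\bar\Gamma$-equivariant; taking $[-/\bar\Gamma]$ and using that forming the canonical stack commutes with the quotient by $\bar\Gamma$ (whose action on $W$ is small) gives
\[
\bigl(Y^{\can}[\sqrt[\mathbf{r}]{\mathbf{D}}]\bigr)^{\can}\cong\bigl[\,[V/R]\,/\,\bar\Gamma\,\bigr]=[V/\Gamma]\cong X
\]
étale-locally; uniqueness of canonical stacks glues these to a global isomorphism over $Y$, and the stated factorization is the composite of the canonical-stack structure map, the root-stack structure map, and the coarse-space map.

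The step I expect to be the main obstacle is this last local computation $(W[\sqrt[\mathbf{r}]{\Delta}])^{\can}\cong[V/R]$: it requires Chevalley--Shephard--Todd together with a stratum-by-stratum comparison of the abelian stabilizers created by the iterated root stack with the reflection subgroups of $R$, and an application of the universal property of the canonical stack to pin this comparison down. The reductions in the first two paragraphs are comparatively routine but must be arranged so that the final isomorphism is canonical enough to descend.
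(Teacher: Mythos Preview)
The paper does not prove this statement: it is quoted verbatim as \cite[Theorem 1]{Geraschenko-Satriano17} and used as a black box (specifically via its consequence \Cref{corollary_quotient_stack_is_root_stack}). There is therefore no ``paper's own proof'' to compare your proposal against.

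That said, your sketch is a reasonable outline of how the Geraschenko--Satriano argument actually runs: \'etale-local reduction to a linear model $[V/\Gamma]$, separation of the pseudoreflection subgroup $R\trianglelefteq\Gamma$ via Chevalley--Shephard--Todd so that $Y^{\can}\cong[W/\bar\Gamma]$ with $W=V/R$, and then a comparison of $[V/R]$ with the canonical stack of the root stack along the discriminant. The one place where your write-up is thinner than the real argument is the ``key local computation'' $\bigl(W[\sqrt[\mathbf{r}]{\Delta}]\bigr)^{\can}\cong[V/R]$: you invoke the universal property of the canonical stack together with agreement outside $\operatorname{Sing}(\Delta)$, but to apply terminality you need a morphism in the right direction between the two smooth stacks, not just an isomorphism over a dense open. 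In the original paper this is handled by constructing the map explicitly (or, in their formulation, by inducting on the structure of the stack and checking stabilizers on strata). Your instinct that this is the main obstacle is correct; you would need to actually build the comparison map, not merely match generic behavior.
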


For our purposes, the following corollary is more directly useful.

\begin{cor}[{{\cite[Corollary~5.6]{GS}}}]
  \label{corollary_quotient_stack_is_root_stack}
Suppose that $X$ is a smooth quasi-projective variety and $G$ a finite abelian group whose order is coprime to the characteristic of $k$. Then the induced map $$[X/G] \to (X/G)^{\can}$$ is a root stack morphism along a collection of smooth connected divisors with simple normal crossings.
\end{cor}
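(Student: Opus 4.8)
The plan is to obtain this from Theorem~\ref{theorem_coursemoduli_is_rootstack_morphism} of Geraschenko--Satriano, with the hypothesis that $G$ is abelian entering only through a local computation of the branch divisor. First note that we may assume $G$ acts faithfully on $X$: otherwise the generic stabilizer of $[X/G]$ is nontrivial and there is no root stack morphism $[X/G]\to(X/G)^{\can}$ at all (root stack morphisms are isomorphisms over a dense open), and faithfulness holds in all of our applications. Then $[X/G]$ has trivial generic stabilizer, and since $X$ is smooth quasi-projective with $|G|$ prime to $\operatorname{char}k$, the stack $[X/G]$ is smooth, separated, and tame Deligne--Mumford. So Theorem~\ref{theorem_coursemoduli_is_rootstack_morphism} applies with $Y=X/G$: writing $D\subset Y$ for the branch divisor of $\pi\colon[X/G]\to Y$, $\mathbf{D}=(\mathcal D_1,\dots,\mathcal D_n)$ for its pullback to $Y^{\can}=(X/G)^{\can}$, and $r_i$ for the ramification index along $\mathcal D_i$, one obtains $[X/G]\cong\bigl(Y^{\can}[\sqrt[\mathbf{r}]{\mathbf{D}}]\bigr)^{\can}$.

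The statement then reduces to the claim that $\mathbf{D}$ is a collection of smooth connected divisors with simple normal crossings. Indeed, granting this, $Y^{\can}[\sqrt[\mathbf{r}]{\mathbf{D}}]$ is a root stack of a smooth Deligne--Mumford stack along smooth SNC divisors, hence is itself smooth, hence is its own canonical stack (a smooth stack is terminal among codimension-preserving modifications of itself; cf.~\Cref{rmk.smooth}); so $[X/G]\cong Y^{\can}[\sqrt[\mathbf{r}]{\mathbf{D}}]$ and the structure morphism is the desired root stack morphism. Connectedness of the $\mathcal D_i$ is automatic, as they are the irreducible components of the global branch divisor $D$, and constancy of $r_i$ along $\mathcal D_i$ is purity of the branch locus; the substantive point is smoothness and normal crossings.

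To verify these I would work étale-locally on $Y^{\can}$, reducing---via Luna's étale slice theorem, or the linearization lemma of Cartan already used in the proof of \Cref{YAsmooth}---to the model where a finite abelian group $\Gamma\le\GL(V)$ acts linearly and faithfully on $V=\bA^d$. Since $\Gamma$ is abelian of order prime to $\operatorname{char}k$ it is diagonalizable, so pick coordinates $x_1,\dots,x_d$ in which $\Gamma$ is diagonal. Let $H_i\le\Gamma$ be the subgroup of elements fixing all coordinates but the $i$-th, a cyclic group of some order $r_i$, so that $\Gamma_{\mathrm{qr}}:=\langle H_1,\dots,H_d\rangle=H_1\times\cdots\times H_d$ is precisely the subgroup of $\Gamma$ generated by pseudo-reflections. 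By Chevalley--Shephard--Todd, $V/\Gamma_{\mathrm{qr}}\cong\bA^d$ with coordinates $u_i=x_i^{r_i}$, and $[V/\Gamma_{\mathrm{qr}}]\cong\bA^d[\sqrt[\mathbf{r}]{(\{u_1=0\},\dots,\{u_d=0\})}]$ by the standard identification $[\bA^1/\mu_r]\cong\bA^1[\sqrt[r]{0}]$ together with iteration of root stacks along distinct divisors. The quotient $\Gamma':=\Gamma/\Gamma_{\mathrm{qr}}$ acts diagonally on $\bA^d=V/\Gamma_{\mathrm{qr}}$, hence preserves each hyperplane $\{u_i=0\}$ setwise and contains no pseudo-reflections, so it acts freely in codimension one; therefore $(V/\Gamma)^{\can}\cong[\bA^d/\Gamma']$. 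Finally, because $\Gamma'$ preserves the $\{u_i=0\}$, root stacking commutes with passage to the quotient stack, giving
\[
[V/\Gamma]\;\cong\;\bigl[[V/\Gamma_{\mathrm{qr}}]/\Gamma'\bigr]\;\cong\;[\bA^d/\Gamma']\bigl[\sqrt[\mathbf{r}]{\overline{\mathbf{D}}}\bigr]\;\cong\;(V/\Gamma)^{\can}\bigl[\sqrt[\mathbf{r}]{\overline{\mathbf{D}}}\bigr],
\]
where $\overline{\mathbf{D}}$ is the image of the coordinate hyperplanes; it pulls back to $\bigcup_i\{u_i=0\}$ on the étale cover $\bA^d$, which is smooth with simple normal crossings, so $\mathbf{D}$ has the required form after gluing (the local presentations being compatible as the root stack construction is étale-local and functorial).

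\textbf{Main obstacle.} The crux is this local analysis: it is precisely diagonalizability of the abelian local stabilizers that forces their pseudo-reflections to sit in separate coordinates, so that the quotient by $\Gamma_{\mathrm{qr}}$ is a coordinate-power (toric) map and the branch divisor comes out with simple normal crossings---for a non-abelian $G$ this can genuinely fail. The other delicate point, gluing the étale-local root-stack presentations of $[X/G]$ over $Y^{\can}$ into a global one (i.e.\ checking that the data $(\mathcal D_i,r_i)$ is globally well-defined), is exactly what Theorem~\ref{theorem_coursemoduli_is_rootstack_morphism} already packages, so in practice one leans on that theorem and supplies only the local SNC computation above, noting that it propagates along the connected components of $D$.
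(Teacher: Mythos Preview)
The paper does not prove this statement; it is quoted verbatim as \cite[Corollary~5.6]{GS} and used as a black box. There is therefore no ``paper's own proof'' to compare against.

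That said, your argument is essentially the standard one and is correct in outline. You correctly identify that the content beyond Theorem~\ref{theorem_coursemoduli_is_rootstack_morphism} is the local claim that the branch divisor on $(X/G)^{\can}$ is smooth with simple normal crossings, and that this is exactly where abelianness enters: an abelian stabilizer is simultaneously diagonalizable, so its pseudo-reflection subgroup is a product of cyclic groups acting in separate coordinates, the Chevalley--Shephard--Todd quotient is $x_i\mapsto x_i^{r_i}$, and the residual small group preserves the coordinate hyperplanes. One small point worth tightening: you assert that root stacking commutes with the $\Gamma'$-quotient because $\Gamma'$ preserves each $\{u_i=0\}$ setwise, but you should also note that $\Gamma'$ acts compatibly on the line bundles $\cO(\{u_i=0\})$ and their chosen sections (it does, since everything is linear), which is what the root stack construction needs to be equivariant. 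Otherwise the sketch is sound and matches the argument in Geraschenko--Satriano.
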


\subsection{Semiorthogonal Decompositions}
Before we state the main results needed regarding semiorthogonal decompositions, we first briefly remind the reader of the definition.

\begin{dfn}\label{def_semiorthogonal_decomp}
  Let $\mathcal{T}$ be a triangulated category. A semiorthogonal decomposition, written as $$\mathcal{T} = \< \mathcal{A}_1,\ldots, \mathcal{A}_n\>,$$ is a collection of full triangulated subcategories $\mathcal{A}_1,\ldots,\mathcal{A}_n$, referred to as the components of the decomposition, such that 
\begin{enumerate}
\item For any $T \in \mathcal{A}_i$ and $S \in \mathcal{A}_j$, if $i>j$ then $\Hom(T,S) = 0$.
\item for any $T \in \mathcal{T}$, there is a sequence of morphisms $$0 = F_n \to F_{n-1} \to \cdots \to F_1 \to F_0 =T$$ such that $\operatorname{Cone}(F_i \to F_{i-1}) \in \mathcal{A}_i$.
  \end{enumerate}
\end{dfn}

\begin{rmk}\label{remark_semiorthogonal_is_filtration}
One should think of a semiorthogonal decomposition as giving each object of a category a distinguished filtration, whose intermediate factors belong to each of the specified subcategories. Further, condition (1) implies that the intermediate factors are unique and functorial. 
\end{rmk}

\begin{rmk}\label{remark_exceptional_objects}
  When the subcategories appearing in \Cref{def_semiorthogonal_decomp} are as simple as possible, that is, $\mathcal{A}_i \cong D(k-\text{v.s.})$ the derived category of $k$-vector spaces, $\mathcal{A}_i$ is generated by a single object called an exceptional object and we call $\mathcal{A}_i$ exceptional.
  In general an exceptional object is an object $E$ in a triangulated category such that $$\operatorname{Hom}(E,E[i]) = \begin{cases} k & i=0 \\ 0 & i\neq 0. \end{cases}$$ When a triangulated category admits a semiorthogonal decomposition as above, and some of the $\mathcal{A}_i$ are exceptional 
  we say that the collection of those $\{\mathcal{A}_i\}$  form an exceptional collection. 
If all $\mathcal{A}_i$ are exceptional, then we say they are a full exceptional collection.
\end{rmk}

Semiorthogonal decompositions are usually very interesting from the perspective of noncommutative geometry, but are typically difficult to establish. However, in a handful of cases, much is known, for example the well-known semiorthogonal decomposition of projective space \cite{Beilinson} (cf.~\cite[Corollary~8.29]{Huybrechts}):
\begin{equation}\label{Pn}D(\bP^n) = \< \cO,\cO(1),...,\cO(n)\>.\end{equation}
If $X \to Y$ is a blow-up with smooth center $Z \subset Y$ of codimension $c$, then it is a result of Orlov~\cite{Orlov} (cf.~\cite[Prop.~11.18]{Huybrechts}), that we have the following semiorthogonal decomposition of $X$:
\begin{equation}\label{orlov.blowup}
  D(X) = \langle D(Z)(1-c),\ldots,D(Z)(-1), D(Y)\rangle.
\end{equation}  

From the viewpoint of the derived category, the root stack along a divisor behaves much like a blow-up of a variety along a smooth center. Before we make this analogy precise, it will be useful for us to know explicitly the embedding functors appearing in the above semiorthogonal decomposition, so we devote some time to discuss them here. Additionally, although it is possible to weaken the assumptions, we only work with smooth Deligne--Mumford stacks for the rest of the paper. 

Let $X$ be a smooth Deligne--Mumford stack and $\mathbf{D} = (D_1,...,D_n)$ a collection of effective Cartier divisors and $\mathbf{r} \in \Z_{>0}^n$. Let $D$ be the union of the $D_i$, then one can think of the stack $\cD := D[\sqrt[\mathbf{r}]{\mathbf{D}}]$ as the ``strict transform'' of $D$ in the root stack. Indeed there is a commutative diagram
\begin{center}
    \begin{tikzcd}
      \cD \arrow[r,"j"] \arrow[d,"\pi_D"] & X[\sqrt[\mathbf{r}]{\mathbf{D}}] \arrow[d,"\pi_X"] \\
        D \arrow[r, "\overline{j}"] & X 
    \end{tikzcd}
\end{center}
where $\overline{j}$ is the closed embedding of the divisor $D$, and $j$ is the composition of the closed embedding $\cD \to X[\sqrt[\mathbf{r}]{\mathbf{D}}]|_D$ (see \cite[Appendix B]{AGV08}) and the inclusion $X[\sqrt[\mathbf{r}]{\mathbf{D}}]|_D \subset X[\sqrt[\mathbf{r}]{\mathbf{D}}]$. Now define the functors, for $\ell \in \Z$:
$$ \pi_X^*: D(X) \to D(X[\sqrt[\mathbf{r}]{\mathbf{D}}])$$ $$\Phi_\ell := \cO_{X}(\ell \mathcal{D}) \otimes  j_* \pi_D^*(-):  D(D) \to D(X[\sqrt[\mathbf{r}]{\mathbf{D}}]).$$ 

The following result was originally proven in \cite{BLS16} and \cite{IU15} independently. A version for iterated root stacks also appears in \cite{BLS16}, but 
we will not need its full statement here.

\begin{thm}[{\cite[Theorem 4.7]{BLS16}}, {\cite[Proposition 6.1]{IU15}}]\label{root.SOD}
    Let $X$ be a smooth Deligne--Mumford stack and $D \subset X$ an effective Cartier divisor. Fix a positive integer $r$, and let $\pi : X[\sqrt[r]{D}] \to X$ be the $r$th root stack along $D$. Then the functors $\pi_X^*$ and $\Phi_\ell$ are fully faithful and there is a semiorthogonal decomposition into admissible subcategories: $$D\left(X[\sqrt[r]{D}]\right) = \< \Phi_{r-1}(D(D)),\ldots, \Phi_{1}(D(D)), \pi_X^*D(X)\>.$$
\end{thm}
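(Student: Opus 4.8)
The plan is to reduce the statement to two cohomological facts about the root stack and then argue formally. Write $\mathcal X := X[\sqrt[r]{D}]$ and $\pi := \pi_X$; let $\cD \subset \mathcal X$ be the strict transform of $D$ (a closed substack carrying generic $\mu_r$-stabilizers), $j \colon \cD \hookrightarrow \mathcal X$ its inclusion, $\pi_D \colon \cD \to D$ the structure map, and $\mathcal N := j^*\cO(\cD)$, a line bundle on $\cD$ satisfying $\mathcal N^{\otimes r} \cong \pi_D^*(\cO_X(D)|_D)$ and on which the $\mu_r$-inertia of $\cD$ acts with weight $1$. \'Etale-locally $\mathcal X = [\Spec(A[s]/(s^r - t))/\mu_r]$, with $D = \{t = 0\}$, $\cD = \{s = 0\}$, $\zeta \cdot s = \zeta s$, and $\pi$ the coarse-space map; since $\mu_r$ is linearly reductive over $\C$, two facts drop out: (i) $\pi$ is flat and proper with $R\pi_*\cO(\ell\cD) \cong \cO_X$ for $0 \le \ell \le r-1$ (the weight-$0$ summand of the free $A$-module $s^{-\ell}A[s]/(s^r-t)$), in particular $R\pi_*\cO_{\mathcal X} \cong \cO_X$; and (ii) $R\pi_{D*}\cO_{\cD} \cong \cO_D$, while $R\pi_{D*}\mathcal N^{\otimes\ell} = 0$ for all $\ell \not\equiv 0 \pmod r$ (a nonzero $\mu_r$-weight has no invariants). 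I will also use the distinguished triangle attached to the residue sequence $0 \to \cO_{\mathcal X}(-\cD) \to \cO_{\mathcal X} \to j_*\cO_{\cD} \to 0$, and the self-intersection formula $Lj^*j_*M \cong M \oplus (M \otimes \mathcal N^{-1})[1]$ for the regular closed embedding $j$.

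\textbf{Full faithfulness and semiorthogonality.} These follow from one uniform adjunction computation. For $\pi^*$: the projection formula and (i) give $R\Hom_{\mathcal X}(\pi^*A, \pi^*B) \cong R\Hom_X(A, B \otimes R\pi_*\cO_{\mathcal X}) \cong R\Hom_X(A, B)$. Writing $\Phi_\ell B = j_*(\pi_D^*B \otimes \mathcal N^{\ell})$ via the projection formula for $j$, and using $Lj^* \dashv j_*$, the identity $\pi j = \bar j\,\pi_D$, the self-intersection formula, $\pi_D^* \dashv R\pi_{D*}$, and the projection formula for $\pi_D$, I obtain for $A \in D(X)$, $B, B' \in D(D)$ and $1 \le \ell \le \ell' \le r-1$:
\[ R\Hom_{\mathcal X}(\pi^*A, \Phi_\ell B) \cong R\Hom_D(\bar j^*A,\, B \otimes R\pi_{D*}\mathcal N^{\ell}), \]
\[ R\Hom_{\mathcal X}(\Phi_\ell B, \Phi_{\ell'}B') \cong R\Hom_D(B,\, B' \otimes R\pi_{D*}\mathcal N^{\ell'-\ell}) \;\oplus\; R\Hom_D(B,\, B' \otimes R\pi_{D*}\mathcal N^{\ell'-\ell+1})[-1]. \]
By (ii) the first line vanishes for every $1 \le \ell \le r-1$; in the second line, when $\ell < \ell'$ both $\ell'-\ell$ and $\ell'-\ell+1$ lie in $\{1,\dots,r-1\}$ so both summands vanish, while when $\ell = \ell'$ the first summand is $R\Hom_D(B,B')$ and the second vanishes (as $1 \not\equiv 0 \bmod r$ for $r \ge 2$; the case $r=1$ is trivial). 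This is exactly the full faithfulness of $\Phi_\ell$ together with the semiorthogonality of $\langle \Phi_{r-1}(D(D)),\dots,\Phi_1(D(D)),\pi^*D(X)\rangle$ in the stated order.

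\textbf{Generation.} Let $\mathcal T \subseteq D(\mathcal X)$ be the triangulated subcategory generated by the listed components. I would show $\pi^*F \otimes \cO_{\mathcal X}(\ell\cD) \in \mathcal T$ for all $F \in D(X)$ and all $\ell \in \Z$, by induction on $|\ell|$: the case $\ell = 0$ is immediate; the cases $\ell = \pm r$ reduce to $\ell = 0$ because $\cO_{\mathcal X}(r\cD) \cong \pi^*\cO_X(D)$; and for $1 \le \ell \le r-1$, tensoring the residue triangle by $\pi^*F \otimes \cO_{\mathcal X}(\ell\cD)$ gives a triangle whose outer terms $\pi^*F \otimes \cO_{\mathcal X}((\ell-1)\cD)$ (by induction) and $j_*(\pi_D^*\bar j^*F \otimes \mathcal N^{\ell}) = \Phi_\ell(\bar j^*F)$ both lie in $\mathcal T$, so the middle term does too; the range $-(r-1) \le \ell \le -1$ is symmetric. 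Finally, \'etale-locally every $\mu_r$-equivariant $A[s]/(s^r-t)$-module is, after restriction to $A$, an iterated extension of its $r$ weight components, so $D(\mathcal X)$ is generated by the objects $\pi^*G \otimes \cO_{\mathcal X}(\ell\cD)$ with $G \in D(X)$, $0 \le \ell \le r-1$; hence $\mathcal T = D(\mathcal X)$. Together with the orthogonality above this yields the semiorthogonal decomposition, and each component is admissible since $\pi^*$ and $\Phi_\ell$ admit both adjoints (built from $R\pi_*$, $R\pi_{D*}$, $Lj^*$, $j^!$ and Grothendieck duality for the proper maps $\pi$, $\pi_D$, $j$).

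\textbf{Main obstacle.} The only non-formal input is the pair of local computations (i) and (ii) --- equivalently, recognizing $\cD \to D$ as a $\mu_r$-gerbe and $\mathcal N$ as a weight-$1$ line bundle on it, which is what forces $R\pi_{D*}\mathcal N^{\otimes\ell} = 0$ for $\ell \not\equiv 0 \pmod r$; after that, full faithfulness and semiorthogonality are pure bookkeeping with adjunctions. The generation argument is the lengthiest though conceptually routine, and its one delicate point --- handling twists $\cO_{\mathcal X}(\ell\cD)$ with $\ell$ outside $[0, r-1]$ --- is precisely what the periodicity $\mathcal N^{\otimes r} \cong \pi_D^*(\cO_X(D)|_D)$ takes care of.
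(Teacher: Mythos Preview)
The paper does not give a proof of this theorem; it is quoted from Bergh--Lunts--Schn\"urer and Ishii--Ueda and used as a black box in \S\ref{strategy}. Your outline is essentially the argument found in those references: pass to the \'etale-local model $[\Spec(A[s]/(s^r-t))/\mu_r]$, recognise $\cD\to D$ as a $\mu_r$-gerbe on which $\mathcal N$ has weight $1$ so that $R\pi_{D*}\mathcal N^{\otimes\ell}=0$ for $\ell\not\equiv 0\pmod r$, and then run the adjunction and projection-formula bookkeeping together with the Koszul triangle for the divisorial embedding $j$. One stylistic caveat: the identity $Lj^*j_*M \cong M \oplus (M\otimes\mathcal N^{-1})[1]$ is not a general fact about regular closed immersions---in general one only has a distinguished triangle, whose connecting map is the cup product with the normal-bundle class. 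In your situation the obstruction to splitting lies in $\Ext^2_{\cD}(M,M\otimes\mathcal N^{-1})$, which for $M=\pi_D^*B\otimes\mathcal N^{\ell}$ vanishes by the very weight computation you invoke, so the splitting does hold here; alternatively, the triangle alone already suffices for both the semiorthogonality (two of three terms vanish) and the full-faithfulness (one term vanishes, the other gives the identity) steps, so the argument goes through either way.
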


Since equivariant derived categories appear in the statement of our results we take a moment to note that they are equivalent to derived categories of quotient stacks.
Given a variety $X$ and a group $G$ acting on $X$, we have the following equivalence (see \cite[Exercise~9.H]{Olsson}):
$$D^G(X):=D([X/G]).$$
The reader interested in reading further about $G$-equivariant sheaves may wish to see, for example \cite[Section 4]{Bridgeland-King-Reid:2001}. 

In light of the result of the discussion above, notably \Cref{corollary_quotient_stack_is_root_stack} and \Cref{root.SOD}, given a finite abelian group acting on the variety $X$, we now have a prescription to obtain a semiorthogonal decomposition of the equivariant derived category on $X$, whose components consist of the derived categories of the irreducible components of the branching divisors $[X/G] \to (X/G)^{\can}$ as well as the derived category of the canonical stack $(X/G)^{\can}$. If $X$ is a quasi-projective surface, then in fact \cite[Theorem 1.4]{IU15} provides a way to decompose $D((X/G)^{\can})$ in terms of the minimal resolution of $X/G$ and a collection of exceptional objects coming from intersections of the branching divisors.
In our case we treat the action of $A:=\bZ/2$ on $Y:=\GHilb(\bC^2)$ and
by \Cref{YAsmooth}
the quotient $Y/A$ is smooth, thus we have $(Y/A)^{\can} \cong Y/A$ (see \Cref{rmk.smooth}).
These observations were first assembled by Potter in his thesis \cite{Pot18}, for convenience we include the full statement below.

\begin{thm}[{\cite[Corollary 6.1.2]{Pot18}}]
  Let $X$ be a quasi-projective surface over $k$ and $G$ a finite abelian group acting faithfully on $X$. Let $D = \sum D_i$ be the branch divisor of the coarse moduli space morphism $\pi:[X/G] \to X/G$, and $Y$ the minimal resolution of $X/G$. Then there is a semiorthogonal decomposition $$D^G(X) = \langle E_1,\ldots,E_k, \{D(D_1)\}_{i=1}^{r_1},\ldots,\{D(D_n)\}_{i=1}^{r_n}, D(Y)\rangle $$ consisting of $D(Y)$, multiple copies of $D(D_i)$, the number of which is the order of the stabilizer group of $D_i$, and exceptional objects arising from the intersections of $D_i$ and $D_j$ where the stabilizer group jumps and non-special representations of $G$ acting at an isolated point.
\end{thm}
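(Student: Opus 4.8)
The plan is to assemble this decomposition from the three ingredients recalled in this section; the statement is \cite[Corollary~6.1.2]{Pot18}, so the task is really to indicate how the pieces fit together. First I would invoke \Cref{corollary_quotient_stack_is_root_stack}: since $G$ is finite abelian, acts faithfully on the smooth quasi-projective surface $X$, and $|G|$ is coprime to the characteristic of $k$, that corollary identifies $[X/G]$ with an iterated root stack $(X/G)^{\can}[\sqrt[\mathbf{r}]{\mathbf{D}}]$ along a collection $\mathbf{D} = (\mathcal{D}_1,\dots,\mathcal{D}_n)$ of smooth connected simple normal crossings divisors on $(X/G)^{\can}$, with $r_i$ the ramification index along $\mathcal{D}_i$ --- equivalently, the order of the stabilizer group of the branch component $D_i$.

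Next I would iterate \Cref{root.SOD} along the divisors $\mathcal{D}_i$ (legitimate by \Cref{remark_iterated root stacks}; at each stage the ambient stack stays smooth Deligne--Mumford and the next divisor stays Cartier because the arrangement is simple normal crossings). Each pass peels off, for the $i$th divisor and each $\ell = 1,\dots,r_i-1$, a copy of $D(\widetilde{\mathcal{D}}_i)$, where $\widetilde{\mathcal{D}}_i$ is the ``strict transform'' of $\mathcal{D}_i$: the root stack of the curve $\mathcal{D}_i$ along the finitely many points where it meets the other branch components. Eventually only the innermost piece $\pi^* D((X/G)^{\can})$ survives alongside these. I would then apply the curve case of \Cref{root.SOD} once more to each $D(\widetilde{\mathcal{D}}_i)$: the relevant divisor is now a reduced point, so the derived category of the residual gerbe is just the derived category of vector spaces, and $D(\widetilde{\mathcal{D}}_i)$ decomposes as $\langle (\text{exceptional objects}),\, D(D_i)\rangle$. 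These new exceptional objects are precisely the ones attached to intersections $D_i \cap D_j$ at which the stabilizer jumps; what remains after this is $D((X/G)^{\can})$ together with the copies of $D(D_i)$ in the stated multiplicities.

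Finally, since $X/G$ is a quasi-projective surface with tame quotient singularities, I would apply \cite[Theorem~1.4]{IU15} to decompose $D((X/G)^{\can})$ into $D(Y)$, where $Y \to X/G$ is the minimal resolution, together with exceptional objects indexed by the non-special irreducible representations of the (cyclic) stabilizers at the isolated quotient singularities. Collecting all exceptional objects produced in the last two steps into $E_1,\dots,E_k$ and using the semiorthogonality relations to reorder the components --- pushing the exceptional pieces to the left and $D(Y)$ to the right, the requisite $\Hom$-vanishings being built into each nested bracket and preserved by the fully faithful embedding functors of \Cref{root.SOD} --- yields the asserted form of the decomposition, and reading off the multiplicities finishes the count.

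The step I expect to be the main obstacle is the middle one: verifying that the iteration of \Cref{root.SOD} is valid at every stage and, above all, that the strict transforms of the branch components are genuinely root stacks of curves along their mutual intersection points, so that the second application of the root-stack decomposition really does produce exceptional objects rather than something more complicated. Closely related is the bookkeeping needed to match the Ishii--Ueda surface decomposition's exceptional objects with ``non-special representations at an isolated point'' and to confirm that these, together with the intersection-type exceptional objects, can be moved into a single semiorthogonal block in the correct order.
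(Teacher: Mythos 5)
Your proposal assembles exactly the ingredients the paper uses for this statement: \Cref{corollary_quotient_stack_is_root_stack} to present $[X/G]$ as an iterated root stack over $(X/G)^{\can}$, \Cref{root.SOD} (iterated, per \Cref{remark_iterated root stacks}) to peel off the branch-curve categories and the intersection-point exceptional objects, and \cite[Theorem 1.4]{IU15} to break $D((X/G)^{\can})$ into $D(Y)$ plus exceptional objects from non-special representations, followed by mutations to reorder. This is the same route the paper sketches (and attributes to Potter), so your argument matches the intended proof.
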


\subsection{Strategy for the proof of Theorem A}\label{strategy}

We now outline how results on derived categories and semiorthogonal decompositions will be used to prove \refmain. 

Let $G$ be a reflection group as in \refmain. 

The first step we make toward a semiorthgonal decomposition of $D^G(\bC^2)$ is to use the following equivalence of categories, which is valid for any choice of nontrivial finite group $G\leq \GL(2,\bC)$ and the other notation is defined analogously to above:

\begin{thm}[{{\cite[Theorem~4.1]{IU15}}}]\label{zeroth}
$D^G(\C^2)\simeq D^A(Y)$
\end{thm}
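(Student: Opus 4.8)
The plan is to realise both sides as the $A$-equivariantization of a single equivalence, namely the McKay equivalence $D^H(\C^2)\simeq D(Y)$ for the normal subgroup $H=G\cap\SL(2,\C)$. First I would record the two ``formal'' identifications. Since $H\trianglelefteq G$ with $G/H=A$, there is a canonical isomorphism of quotient stacks $[\C^2/G]\cong\bigl[[\C^2/H]/A\bigr]$; combined with $D^G(\C^2)=D([\C^2/G])$ and the fact that, for a finite group $A$ whose order is invertible acting on a sufficiently nice stack $\mathcal X$, one has $D([\mathcal X/A])\cong D(\mathcal X)^A$ (equivariantization, taken after passing to a dg enhancement), this yields $D^G(\C^2)\cong D^H(\C^2)^A$. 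The same statement applied to the $A$-action on $Y$ from \eqref{setting} gives $D^A(Y)\cong D(Y)^A$. So it suffices to produce an $A$-equivariant equivalence $D^H(\C^2)\simeq D(Y)$ and then invoke the general principle that an $A$-equivariant equivalence of categories-with-$A$-action induces an equivalence of equivariantizations.

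Next I would bring in the McKay equivalence itself. For $H\le\SL(2,\C)$ (more generally for any small finite $H\le\GL(2,\C)$) there is the Bridgeland--King--Reid/Kapranov--Vasserot--type equivalence $\Phi\colon D^H(\C^2)\xrightarrow{\sim}D(Y)$, $Y=\HHilb(\C^2)$, realised as a Fourier--Mukai transform along the universal family of $H$-clusters $\mathcal Z\subset\C^2\times Y$. The crucial point is that $\mathcal Z$ is equivariant for the $G$-action on $\C^2\times Y$ in which $G$ acts on $\C^2$ through the given representation and on $Y$ through $A$: an element $g\in G$ normalises $H$, hence carries an $H$-cluster with regular-representation structure to another such (twisting the regular representation by conjugation gives the regular representation again), inducing an automorphism of $\HHilb(\C^2)$ that is trivial for $g\in H$ because the parametrised ideals are $H$-invariant, and therefore factors through $A$; by construction this automorphism transports $\mathcal Z$ to itself. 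Consequently $\mathcal O_{\mathcal Z}$ carries a natural $G$-linearization, from which one extracts the coherent system of natural isomorphisms $\Phi\circ a_*\cong a_*\circ\Phi$ (with the cocycle condition) exhibiting $\Phi$ as an $A$-equivariant functor.

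Feeding this $A$-equivariant equivalence into the descent principle of the first paragraph then gives $D^H(\C^2)^A\simeq D(Y)^A$, hence $D^G(\C^2)\simeq D^A(Y)$. Alternatively, and more concretely, one can bypass equivariantization altogether and check directly that the $G$-linearized kernel $\mathcal O_{\mathcal Z}$ defines a Fourier--Mukai functor $D^G(\C^2)\to D^A(Y)$ which is an equivalence, by verifying the Bridgeland--King--Reid spanning and orthogonality criteria after forgetting the equivariant structures.

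The step I expect to be the main obstacle is the honest verification that $\Phi$ is $A$-equivariant in the $2$-categorical sense---producing the coherent family of natural isomorphisms and checking their compatibility---together with the standard but enhancement-sensitive fact that an $A$-equivariant equivalence descends to equivariantizations. By contrast, the geometric input---that the cluster family $\mathcal Z$ is $G$-stable and that $H$ acts trivially on $Y$---is immediate from the moduli description of $\HHilb(\C^2)$ recalled in \Cref{sec:hhilb}.
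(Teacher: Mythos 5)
The paper does not actually prove this statement---it is imported verbatim from \cite[Theorem~4.1]{IU15} (the text only adds the remark about the abelian level)---so there is no internal argument to compare against, only the citation. Your outline (equivariantize the Kapranov--Vasserot/Bridgeland--King--Reid equivalence using the $G$-linearized universal cluster family $\mathcal{O}_{\mathcal{Z}}$, observe that $H$ acts trivially on $Y$ so the residual action factors through $A$, then pass to $A$-equivariantizations via Elagin-type descent) is correct and is essentially the argument of the cited source; the only gloss is in your ``direct'' Fourier--Mukai alternative, where the kernel naturally produces a functor to $D^G(Y)$ (with $G$ acting through $A$) and one must take $H$-invariants of the transform to land in $D^A(Y)$.
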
  

More strongly, the underlying categories of coherent sheaves are equivalent. 

The remainder of the strategy proceeds now as follows. By \Cref{corollary_quotient_stack_is_root_stack}, the map to the canonical stack $[Y/A]\to (Y/A)^{\can}$ is a root stack. Since $Y/A$ is a smooth quotient
(\Cref{YAsmooth}), as discussed in \Cref{rmk.smooth}, we may simply replace the canonical stack $(Y/A)^{\can}$ with the quotient $Y/A$.
The quotient $Y/A$
is a 2nd root stack along the branch divisor of $[Y/A]\to (Y/A)$.
In each case we treat, the branch divisor of $[Y/A]\to (Y/A)$
 consists of disjoint  curves $D_1,\ldots,D_n$.
Then \Cref{root.SOD} and \Cref{remark_iterated root stacks} implies that:
\begin{equation}\label{first}
D^A(Y)\simeq\langle D(D_1),\ldots, D(D_n), D(Y/A) \rangle.
\end{equation}

Finally, divisors in $Y/A$ can be blown down to obtain $\bC^2/G$
and thus we apply the blow-up formula \eqref{orlov.blowup} using $c=2$, obtaining a semiorthogonal decomposition of the following form:
\begin{equation}\label{second}
D(Y/A)\simeq\langle E_1,\ldots, E_m, D(\C^2/G) \rangle,
\end{equation}
where $E_1,\ldots, E_m$ are exceptional objects, one for each divisor we blow down. 

Thus, combining \eqref{first} and \eqref{second}, we obtain a semiorthogonal decomposition of the form given in \refmain. We note that the (subcategories generated by) exceptional objects and subcategories generated by branch divisors, since they are embedded via Fourier--Mukai transforms, are admissible and thus can be permuted in any order using mutations; see \cite{BK}, \cite[Section~2]{Bondal}.

In order to also count the number of exceptional objects present in this semiorthogonal decomposition, we must identify, for each $G$, the fixed locus of the action of $A$ on $Y$ and the divisors in $Y/A$ that will be blown down to obtain $\C^2/G$. We address these details in the next section.

\section{Proof of Theorem A}\label{sec:proof}

We now combine the strategy outlined in \Cref{strategy} with the computational results shown in the appendix to prove \refmain.
In Sections \ref{sec.2mm2}-\ref{sec.g22}, we identify the components of the semiorthogonal decomposition of $\dbg(\C^2)$, treating each reflection group that appears in \refmain{} separately.
We illustrate these cases of the proof with Figures~\ref{fig_three}-\ref{fig_seven}, which show the exceptional locus of $\hhilbc$; the red curves indicate where the discriminant curve intersects the exceptional locus and dashed arrows indicate a non-identity action of $A$.
We also discuss the $G(m,m,2)$ case in \Cref{sec.gmm2} for the reader's convenience. Then in \Cref{OSOD}, we conclude by explaining why the components in the semiorthogonal decomposition of $\dbg(\C^2)$,
are in bijection with the conjugacy classes of $G$, verifying the Orbifold Semiorthogonal Decomposition Conjecture.

\subsection{$G(2m,m,2)$}\label{sec.2mm2}

\begin{figure}[h!]
  \begin{subfigure}[h]{0.99\textwidth}
    \centering 
    \begin{tikzpicture}[scale=3]
      \draw(-2,0) to [in=160, out=20] (-1,0); 
      \node at (-2.2,0) [above] {$E(\rho_{m-1})$}; 
      \draw(-1.25,0) to [in=160, out=20] node [midway, above] {$E(\rho_{m-2})$} (-0.25,0); 
      \node at (0,0) {$\cdots$};
      \draw(0.25,0) to [in=160, out=20] node [midway, above] {$E(\rho_2)$} (1.25,0); 
      \draw(1,0) to [in=160, out=20]  node [midway, above] {$E(\rho_1')$} (2,0); 
      \draw(-1.5,-.35) to [in=250, out=110] node [at end, above right] {$E(\rho_m)$} (-1.5,.65); 
      \draw(-2,.65) to [in=180, out=320] (-1,.35); 
      \node at (-2,.65) [above] {$E(\rho_{m+1}')$}; 
      \draw(-2,-.35) to [in=160, out=30]  (-1,-.25); 
      \node at (-2.2,-.35) [below] {$E(\rho_{m+2}')$}; 

      \draw [ultra thick, red] (1.8,-0.5) to [in=250, out=110] (1.8,0.5); 
      \draw [ultra thick, red] (-2,0.3) to [out=30, in=-90] (-1.7,0.8); 
      \draw [ultra thick, red] (-2,-0.25) to [out=-30, in=90] (-1.8, -0.75); 
  
      \draw[->,dashed] (-1.8,-.01) to [in=320,out=220,looseness=5] (-1.7,-.01);
      \draw[->,dashed] (1.5,-.01) to [in=320,out=220,looseness=5] (1.6,-.01);
      \draw[->,dashed] (-1.8,0.44) to [in=320,out=220,looseness=5] (-1.7,0.4);
      \draw[->,dashed] (-1.8,-.35) to [in=320,out=220,looseness=5] (-1.7,-.36);
    \end{tikzpicture}
    \subcaption{Exceptional locus in the $G(2m,m,2)$ ($m$ even) case.}
  \end{subfigure}

  \begin{subfigure}[h]{0.99\textwidth}
    \centering 
    \begin{tikzpicture}[scale=3]
      \draw(-2,0) to [in=160, out=20] (-1,0); 
      \node at (-2.2,0) [above] {$E(\rho_{m-1})$}; 
      \draw(-1.25,0) to [in=160, out=20] node [midway, above] {$E(\rho_{m-2})$} (-0.25,0); 
      \node at (0,0) {$\cdots$};
      \draw(0.25,0) to [in=160, out=20] node [midway, above] {$E(\rho_2)$} (1.25,0); 
      \draw(1,0) to [in=160, out=20]  node [midway, above] {$E(\rho_1')$} (2,0); 
      \draw(-1.5,-.35) to [in=250, out=110] node [at end, above right] {$E(\rho_m)$} (-1.5,.65); 
      \draw(-2,.65) to [in=180, out=320] (-1,.35); 
      \node at (-2,.65) [above] {$E(\rho_{m+1}')$}; 
      \draw(-2,-.35) to [in=160, out=30]  (-1,-.25); 
      \node at (-2.2,-.35) [below] {$E(\rho_{m+2}')$}; 

      \draw [ultra thick, red] (1.8,-0.5) to [in=250, out=110] (1.8,0.5); 
      \draw [ultra thick, red] (-2,0.3) to [out=30, in=140] (-1.3,0.2); 
  
      \draw[->,dashed] (-0.8,-.01) to [in=320,out=220,looseness=5] (-0.7,-.01);
      \draw[->,dashed] (1.5,-.01) to [in=320,out=220,looseness=5] (1.6,-.01);
      \draw[->,dashed] (-1.55,0.05) to [in=50,out=320,looseness=5] (-1.55, 0.15);
      \draw[<->,dashed] (-2.2, 0.65) to [in=140,out=220,looseness=1.5] (-2.2,-0.35);
    \end{tikzpicture}
    \subcaption{Exceptional locus in the $G(2m,m,2)$ ($m$ odd) case. }
  \end{subfigure}
  \caption{$G(2m,m,2)$ cases}\label{fig_three}
\end{figure}

Let $m$ be even. By \Cref{prop.Dn}(c),(e) and \Cref{disc.table} (cf.~\Cref{rmk.puv}), the fixed locus of the action of $A$ on $Y$ consists of $\frac{m}{2}$ exceptional curves $E(\rho_2),E(\rho_4),\dots, E(\rho_m)$, as well as three curves that extend outside the exceptional locus.
We claim that the images of $E(\rho_2),E(\rho_4),\dots, E(\rho_m)$ in $Y/A$ are also copies of $\bP^1$: by~\Cref{corollary_quotient_stack_is_root_stack} their images are smooth, and so we may conclude the claim by Riemann--Hurwitz. Here and in the other cases below, we abuse notation and give the fixed exceptional curves in $Y$ and their images in $Y/A$ the same name.
Let $B_1$, $B_2$, and $B_3$ be 
the images in $Y/A$ of the three curves  that extend outside the exceptional locus.
By \Cref{smooth_lemma}, $B_1$, $B_2$, and $B_3$ are affine lines.

Thus by \eqref{first},
\[D^A(Y)\simeq\langle
D(B_1),D(B_2), D(B_3),
D(E(\rho_2)),D(E(\rho_4)),\dots, D(E(\rho_m)),  
D(Y/A)
\rangle.\]
Since each $E(\rho_k)\simeq \bP^1$, by \eqref{Pn}, there exists a semiorthogonal decomposition as follows, where the $E_i$ are exceptional objects:
\begin{equation}\label{above.1}
  D^A(Y)\simeq\langle
D(B_1),D(B_2), D(B_3), 
  E_1,\ldots, E_{m},
  D(Y/A)
  \rangle.
\end{equation}  
By \Cref{prop.Dn}(a), $A$ acts as an automorphism on each of the $m+2$ exceptional curves in $Y$ and therefore $Y/A$ contains $m+2$ curves that must be blown down to obtain $\bC^2/G$.
Combining \eqref{second} and \Cref{zeroth} with \eqref{above.1}, we have the following semiorthogonal decomposition, where the $E_i$ are exceptional objects:
\[
  D^G(\bC^2)\simeq\langle
D(B_1),D(B_2), D(B_3), 
  E_1,\ldots, E_{2m+2},
  D(\bC^2/G)
  \rangle.
\]
We see in \Cref{Dn.dynkin} that $G$ has $2m+6$ irreducible representations, and thus we have proved \refmain{} in this case.

Let $m$ be odd. By \Cref{prop.Dn}(c),(e) and \Cref{disc.table}, the fixed locus of the action of $A$ on $Y$ consists of $\frac{m-1}{2}$ exceptional curves
$E(\rho_2),E(\rho_4),\dots, E(\rho_{m-1})$ and two curves that extend outside the exceptional locus of $Y$ whose images $B_1$, $B_2$ in $Y/A$ are affine lines. 

By \Cref{prop.Dn}(b), $A$ acts as an automorphism on each of the $m$ exceptional curves
$E(\rho_1)$, \ldots, $E(\rho_m)$ and exchanges $E(\rho_{m+1})$ with $E(\rho_{m+2})$.
Therefore $Y/A$ contains $m+1$ curves that must be blown down to obtain $\bC^2/G$.

Applying \Cref{zeroth}, \eqref{first}, and \eqref{second}, we conclude that
there exists a semiorthogonal decomposition of the following form, where the $E_i$ are exceptional objects:
\[D^G(\bC^2)\simeq\langle
  D(B_1), D(B_2),
  E_1,\ldots, E_{2m},  
  D(\bC^2/G)  \rangle.\]
We see in \Cref{Dn.dynkin} that $G$ has $2m+3$ irreducible representations, proving \refmain{} in this case.

\subsection{$\gtw$}

\begin{figure}[h!]
  \centering 
  \begin{tikzpicture}[scale=3]
    \draw(-2,0) to [in=160, out=20] node [midway, above] {$E(\rho_1')$} (-1,0); 
    \draw(-1.25,0) to [in=160, out=20] node [midway, above] {$E(\rho_2')$} (-0.25,0); 
    \draw(-0.5,0) to [in=160, out=20] node [midway, above, xshift=0.3cm] {$E(\rho_3)$} (0.5,0); 
    \draw(0.25,0) to [in=160, out=20] node [midway, above] {$E(\rho_2'')$} (1.25,0); 
    \draw(1,0) to [in=160, out=20] node [midway, above] {$E(\rho_1'')$} (2,0); 
    \draw(0,-.05) to [in=250, out=110] node [near end, above left] {$E(\rho_2)$} (0,.75); 
  
    \draw [ultra thick, red] (-.5,.5) to [in=110, out=270] (-0.1,-0.025);
  
    \draw[<->,dashed] (-1.5,-.05) to [in=250,out=290] (1.5,-.05); 
    \draw[<->,dashed] (-.75,-.05) to [in=250,out=290] (.75,-.05); 
    \draw[->,dashed] (-.05,-.05) to [in=320,out=220,looseness=10] (.05,-.05); 
  \end{tikzpicture}
  \caption{Exceptional locus in the $G_{12}$ case. }\label{fig_four}
\end{figure}
By \Cref{G12_A_summary}(c) and \Cref{disc.table}, the fixed locus of the action of $A$ on $Y$ consists of one exceptional curve $E(\rho_2)$ and
one curve extending outside the exceptional locus of $Y$, whose image, $B$, in $Y/A$ is an affine line.

By \Cref{G12_A_summary}(a),(b), $A$ acts as an automorphism on two exceptional curves
$E(\rho_2)$, $E(\rho_3)$ and exchanges two pairs of exceptional curves.
Therefore $Y/A$ contains $4$ curves that must be blown down to obtain $\bC^2/G$.

Applying \Cref{zeroth}, \eqref{first}, and \eqref{second}, we conclude that
there exists a semiorthogonal decomposition of the following form, where the $E_i$ are exceptional objects:
\[D^G(\bC^2)\simeq\langle
D(B),E_1,\ldots, E_6, 
  D(\bC^2/G) \rangle.\] 
We see in \Cref{E6.dynkin} that $G$ has $8$ irreducible representations, proving \refmain.

\subsection{\texorpdfstring{$G_{13}$}{G13}}

\begin{figure}[h!]
  \centering
  \begin{tikzpicture}[scale=3]
    \draw(-2,0) to [in=160, out=20] node [midway, above] {$E(\rho_2)$} (-1,0); 
    \draw(-1.25,0) to [in=160, out=20] node [midway, above] {$E(\rho_3)$} (-0.25,0); 
    \draw(-0.5,0) to [in=160, out=20] node [midway, above, xshift=0.3cm] {$E(\rho_4)$} (0.5,0); 
    \draw(0.25,0) to [in=160, out=20] node [midway, above] {$E(\rho_3')$} (1.25,0); 
    \draw(1,0) to [in=160, out=20] node [midway, above] {$E(\rho_2')$} (2,0); 
    \draw(1.75,0) to [in=160, out=20] node [midway, above] {$E(\rho_1')$} (2.75,0); 
    \draw(0,-.05) to [in=250, out=110] node [near end, above left] {$E(\rho_2'')$} (0,.75); 

    \draw [ultra thick, red] (-.5,.5) to [in=210, out=330] (0.5,.5); 
    \draw [ultra thick, red] (2.5,-0.5) to [in=250, out=110] (2.5,0.5); 

    \draw[->,dashed] (-.8,-.05) to [in=320,out=220,looseness=10] (-.7,-.05);
    \draw[->,dashed] (.7,-.05) to [in=320,out=220,looseness=10] (.8,-.05);
    \draw[->,dashed] (2.2,-.05) to [in=320,out=220,looseness=10] (2.3,-.05);
    \draw[->,dashed] (-0.05,0.8) to [in=40,out=140,looseness=10] (0.05,0.8);
  \end{tikzpicture}
  \caption{Exceptional locus in the $G_{13}$ case.}\label{fig_five}
\end{figure}

By \Cref{G13_A_summary}(b),(c) and \Cref{disc.table},
the fixed locus of the action of $A$ on $Y$ consists of the three exceptional curves $E(\rho_2)$, $E(\rho'_2)$ and $E(\rho_4)$ 
as well as two curves
extending outside the exceptional locus of $Y$ whose images $B_1$ and $B_2$ in $Y/A$ are affine lines.

By \Cref{G13_A_summary}(a), $G_{13}$ acts as an automorphism on each of the seven exceptional curves, and therefore $Y/A$ contains $7$ curves that must be blown down to obtain $\bC^2/G$.

Applying  \Cref{zeroth}, \eqref{first}, and \eqref{second}, we conclude that
there exists a semiorthogonal decomposition of the following form, where the $E_i$ are exceptional objects:
\[D^G(\bC^2)\simeq\langle D(B_1), D(B_2), 
E_1,\ldots, E_{13}, 
  D(\bC^2/G)
  \rangle.\] 
We see in \Cref{E7.dynkin} that $G$ has $16$ irreducible representations, proving \refmain.

\subsection{\texorpdfstring{$G_{22}$}{G22}}\label{sec.g22}

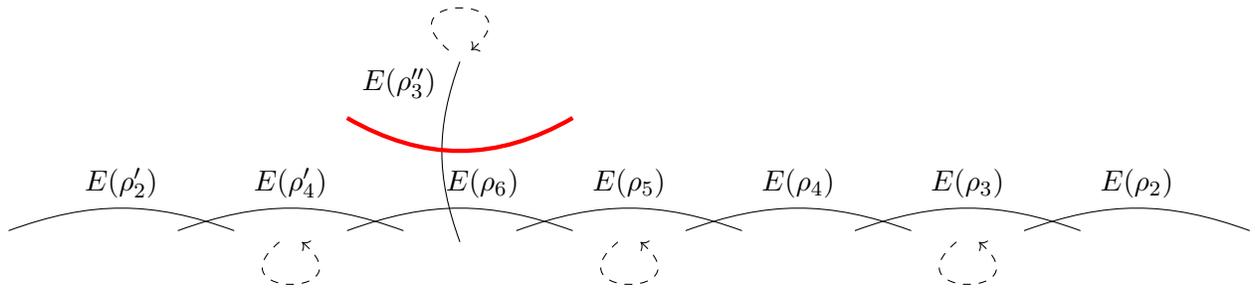
\begin{figure}[h!]
  \centering
  \begin{tikzpicture}[scale=3]
    \draw(-2,0) to [in=160, out=20] node [midway, above] {$E(\rho_2')$} (-1,0); 
    \draw(-1.25,0) to [in=160, out=20] node [midway, above] {$E(\rho_4')$} (-0.25,0); 
    \draw(-0.5,0) to [in=160, out=20] node [midway, above, xshift=0.3cm] {$E(\rho_6)$} (0.5,0); 
    \draw(0.25,0) to [in=160, out=20] node [midway, above] {$E(\rho_5)$} (1.25,0); 
    \draw(1,0) to [in=160, out=20] node [midway, above] {$E(\rho_4)$} (2,0); 
    \draw(1.75,0) to [in=160, out=20] node [midway, above] {$E(\rho_3)$} (2.75,0); 
    \draw(2.5,0) to [in=160, out=20] node [midway, above] {$E(\rho_2)$} (3.5,0); 
    \draw(0,-.05) to [in=250, out=110] node [near end, above left] {$E(\rho_3'')$} (0,.75); 

    \draw [ultra thick, red] (-.5,.5) to [in=210, out=330] (0.5,.5);

    \draw[->,dashed] (-.8,-.05) to [in=320,out=220,looseness=10] (-.7,-.05);
    \draw[->,dashed] (.7,-.05) to [in=320,out=220,looseness=10] (.8,-.05);
    \draw[->,dashed] (2.2,-.05) to [in=320,out=220,looseness=10] (2.3,-.05);
    \draw[->,dashed] (-0.05,0.8) to [in=40,out=140,looseness=10] (0.05,0.8);
  \end{tikzpicture}
  \caption{Exceptional locus in the $G_{22}$ case.}\label{fig_six}
\end{figure}

By \Cref{G22_A_summary}(b),(c) and \Cref{disc.table},
the fixed locus of the action of $A$ on $Y$ consists of the exceptional curves $E(\rho_2)$, $E(\rho'_2)$, $E(\rho_4)$, and $E(\rho_6)$
as well as one curve
extending outside the exceptional locus of $Y$ whose image $B$ in $Y/A$ is an affine line.

By \Cref{G13_A_summary}(a), $G_{22}$ acts as an automorphism on each exceptional curve, and therefore $Y/A$ contains $8$ curves that must be blown down to obtain $\bC^2/G$.

Applying  \Cref{zeroth}, \eqref{first}, and \eqref{second}, we conclude that
there exists a semiorthogonal decomposition of the following form, where the $E_i$ are exceptional objects:
\[D^G(\bC^2)\simeq\langle
D(B),E_1,\ldots, E_{16},
D(\bC^2/G)
\rangle.\] 
We see in \Cref{E8.dynkin} that $G$ has $18$ irreducible representations, proving \refmain.

\subsection{\texorpdfstring{$G(m,m,2)$}{G(m,m,2)}} \label{sec.gmm2}

\begin{figure}[h!]
  \begin{subfigure}[h]{0.99\textwidth}
    \centering 
    \begin{tikzpicture}[scale=3]
      \draw(-2.75,0) to [in=160, out=20] node [midway, above] {$E(\rho_1)$} (-1.75,0); 
      \node at (-1.5,0) {$\cdots$};
      \draw(-1.25,0) to [in=160, out=20] node [midway, above] {$E(\rho_{\frac{m}{2}-1})$} (-0.25,0); 
      \draw(-0.5,0) to [in=160, out=20] node [midway, above] {$E(\rho_{\frac{m}{2}})$} (0.5,0); 
      \draw(0.25,0) to [in=160, out=20] node [midway, above] {$E(\rho_{\frac{m}{2}+1})$} (1.25,0); 
      \node at (1.5,0) {$\cdots$};
      \draw(1.75,0) to [in=160, out=20] node [midway, above] {$E(\rho_{m-1})$} (2.75,0); 
  
      \draw [ultra thick, red] (-.5,.5) to [in=110, out=270] (-0.1,-0.025);
      \draw [ultra thick, red] (0.5,.5) to [in=70, out=270] (0.1,-0.025);
  
      \draw[<->,dashed] (-2.25,-.05) to [in=250,out=290] (2.25,-.05); 
      \draw[<->,dashed] (-.75,-.05) to [in=250,out=290] (.75,-.05); 
      \draw[->,dashed] (-.05,-.05) to [in=320,out=220,looseness=10] (.05,-.05); 
    \end{tikzpicture}
    \subcaption{Exceptional locus in the $G(m,m,2)$ ($m$ even) case.}
  \end{subfigure}
  \begin{subfigure}[h]{0.99\textwidth}
    \centering 
    \begin{tikzpicture}[scale=3]
      \draw(-2.375,0) to [in=160, out=20] node [midway, above] {$E(\rho_1)$} (-1.375,0); 
      \node at (-1.125,0) {$\cdots$};
      \draw(-0.875,0) to [in=160, out=20] node [midway, above] {$E(\rho_{\frac{m-1}{2}})$} (0.125,0); 
      \draw(-0.125,0) to [in=160, out=20] node [midway, above] {$E(\rho_{\frac{m+1}{2}})$} (0.875,0); 
      \node at (1.125,0) {$\cdots$};
      \draw(1.375,0) to [in=160, out=20] node [midway, above] {$E(\rho_{m-1})$} (2.375,0); 
  
      \draw [ultra thick, red] (0,.5) to (0,-0.2);
  
      \draw[<->,dashed] (-1.825,-.05) to [in=250,out=290] (1.825,-.05); 
      \draw[<->,dashed] (-.375,-.05) to [in=250,out=290] (.375,-.05); 
    \end{tikzpicture}
    \subcaption{Exceptional locus in the $G(m,m,2)$ ($m$ odd) case.}
  \end{subfigure}
  \caption{$G(m,m,2)$ cases}\label{fig_seven}
\end{figure}

Finally, for the reader's convenience, we will discuss the proof of \refmain{} for the group $G(m,m,2)$, which was shown in \cite[\S6.5]{Pot18}.

Using explicit charts for the toric minimal resolution of the singularity $A_{m-1}$, Potter computes that 
the fixed locus of the action of $A$ on $Y$ consists only of
the strict transform of the ramification locus $\C^2/H\to \C^2/G$.
This lies over the branch locus $V(z_1^2-z_2^m)$ in $\C^2/G$
and thus has one component $B$ when $m$ is odd and two components, $B_1$ and $B_2$, when $m$ is even.
He furthermore shows that when $m$ is odd, $A$ interchanges pairs of exceptional curves, leaving $\frac{m-1}{2}$ curves in $Y/A$ that must be blown down to obtain $\bC^2/G$. When $m$ is even, $A$ interchanges $\frac{m}{2}-1$ pairs of exceptional curves and acts as a nonidentity involution on $E(\rho_{m/2})$, implying there are
$\frac{m}{2}$ curves in $Y/A$ that must be blown down to obtain $\bC^2/G$.
The curves that are interchanged with one another correspond to the representations directly above and below one another in the Dynkin diagram on the left-hand side of \Cref{An.dynkin}.

Thus,
there exist semiorthogonal decompositions of the following form, where the $E_i$ are exceptional objects:
\begin{align*}
  D^G(\bC^2)&\simeq\langle D(B),  
E_1,\ldots, E_{\frac{m-1}{2}},              D(\bC^2/G)
              \rangle \quad\textrm{($m$ odd),}
  \\
  D^G(\bC^2)&\simeq\langle D(B_1), D(B_2),
E_1,\ldots, E_{\frac{m}{2}}, 
              D(\bC^2/G) \rangle \quad\textrm{($m$ even).}  
\end{align*}
It is illustrated in \Cref{An.dynkin} that $G(m,m,2)$ has $\frac{m+1}{2}$ nontrivial irreducible representations when $m$ is odd and $\frac{m}{2}+2$ when $m$ is even.

\begin{rmk} \label{rmk.proof}
In each case of the proof of \refmain, the number of exceptional objects we add in the first step of the proof, where we use \eqref{first}, is always equal to the number of representations of $G$ that contain an $\ep$ in our Dynkin diagrams. In each case, whether $A$
acts as an involution on exceptional curves or
exchanges them corresponds precisely to whether the corresponding representation is  or is not isomorphic to its contragredient.
\end{rmk}

\subsection{Proof of Corollary~B}\label{OSOD}
As we noted in the introduction, the semiorthogonal decompositions we construct are predicted by Polishchuck and Van den Bergh's Orbifold Semiorthogonal Decomposition Conjecture. We have already shown that the
number of components 
of our semiorthogonal decomposition of $D^G(\bC^2)$
is equal to
the irreducible representations and thus conjugacy classes of $G$, so it remains to show that the components are isomorphic to $D((\bC^2)^g/C(g))$ for each conjugacy class $[g]$, where $C(g)$ is the centralizer of $G$.

We will show that these components are isomorphic, but we will not produce explicit isomorphisms.
For a more explicit approach to constructing this semiorthogonal decomposition and showing it satisfies the conjecture of Polischuck--Van den Bergh, see \cite[Section 3]{Lim-Rota:2024} which covers the details for $G(4,2,2)$.

By \refmain, each semiorthogonal decomposition we construct consists of one copy of $D(\bC^2/G)$, a copy of $D(\bA^1)$ for each of the $r$ components of the branch divisor of $\bC^2\to\bC^2/G$, and $n$ exceptional objects each generating a subcategory isomorphic to $D(\Spec\bC)$, such that $n+r+1$ equals the number of representations of $G$.

In parallel, each group $G$ has three types of conjugacy class: $[I]$ where $I$ is the identity matrix, $[s]$ where $s$ is a reflection, and $[g]$ for all remaining non-identity, non-reflection elements $g\in G$. We have the following isomorphisms:
\[
D((\bC^2)^I/C(I))\simeq D(\bC^2/G),\quad
D((\bC^2)^s/C(s))\simeq D(\bA^1),\quad
D((\bC^2)^g/C(g))\simeq D(\Spec\bC).
\]
Thus we just need to show that the conjugacy classes of reflections are in bijection with the components of the branch divisor of $\bC^2/H\to\bC^2/G$, which is the image of the hyperplanes fixed by reflections in $G$.
Suppose $gs_1g^{-1}=s_2$, where $s_1$ and $s_2$ are reflections fixing hyperplanes $H_1$ and $H_2$. Then $gH_1=H_2$, so $H_1$ and $H_2$ map to the same component of the branch divisor of $\bC^2\to\bC^2/G$ and we are done.

\appendix
\section{Computing the action of \texorpdfstring{$A$}{A} on \texorpdfstring{$\hhilbc$}{H-HilbC}}\label{appendix}

A key component of our proofs of \refmain{} for each of the reflection groups $G$ is our computation of the fixed points of the action of $A$ 
on the exceptional locus of $Y:=\hhilbc$. 
Ito and Nakamura have given explicit algebraic descriptions of the points of $Y$ for each $H\leq \SL(2,\C)$ \cite{Ito-Nakamura:1999}. To perform our computations in this setting, we find embeddings $G\leq \GL(2,\C)$ that are extensions of those given in \cite{Ito-Nakamura:1999}, and in particular we choose an element $\alpha\in G\setminus H$
that we use to explicitly compute the action of $A$.
Ito and Nakamura give the exceptional curves of $Y$ and their intersections in terms of certain $H$-invariant submodules, which in some cases we must identify precisely and parametrize in order to determine the fixed points of the action. In this appendix, we show these computational details and summarize our findings, handling the different groups $G$ in each section.

Some computations discussed in sections~\ref{gtw.comp},~\ref{app.gthir}, and~\ref{app.gtwtw} are verified using \texttt{macaulay2} in the code file \texttt{FixedLocus.m2}, which is available at  \cite{code} or as an ancillary file with the arxiv preprint.

\subsection{}\label{AIsec}

For the groups $H\leq \SL(2,\C)$ we treat in this appendix, 
Ito and Nakamura identify the ideals corresponding to points in the exceptional locus of $Y$ as certain submodules of $\fm/\fn$
where $\fm$ is the maximal ideal of the origin in $\bC^2$ and $\C[x,y]^H\simeq\fn\subseteq\fm\subseteq\C[x,y]$ is the ideal generated by $H$-invariant polynomials.

For each irreducible representation $\rho$ of $H$, they define 
the modules $V_i(\rho)$:
\[V_i(\rho):= \rho\text{-summands of the homogenous degree } i \text{ part of } \fm/\fn.\]
Their theorem below particularly refers to modules $V_{h\pm d(\rho)}(\rho)$, where $h$ is the Coxeter number of $H$, listed in the table below, and $d(\rho)$ is the distance of $\rho$ from the representation at the center in the Dynkin diagram of $H$. For $H$ of type $D_{m+2}$, $E_6$, $E_7$, or $E_8$, the Dynkin diagram is star-shaped with a unique center; these centers are labelled in the Dynkin diagrams in \Cref{appendixb} as $\rho_m$, $\rho_3$, $\rho_4$, and $\rho_6$, respectively.

\renewcommand{\arraystretch}{1.5}
\begin{center}
\begin{tabular}{c|ccccc}
Group $G$ & $G(m,m,2)$ & $G(2m,m,2)$ & $G_{12}$ & $G_{13}$ & $G_{22}$
  \\\hline
Coxeter number of $H$& $m$ & $2m+2$ & 12 & 18 & 30
\end{tabular}
\end{center}

By \cite[Theorem~10.6]{Ito-Nakamura:1999}, we have that $V_{\frac{h}{2}}(\rho)\simeq\rho^2$ for $\rho$ the center of the Dynkin diagram and $V_{\frac{h}{2}-d(\rho)}(\rho)\simeq V_{\frac{h}{2}+d(\rho)}(\rho)\simeq\rho$ otherwise. The structure of the exceptional set for groups of type $D_n$, $E_6$, $E_7$, and $E_8$ is then given by:

\begin{thm}[{{\cite[Theorem 10.7]{Ito-Nakamura:1999}}}]\label{IN.main}
Let $H\leq \SL(2,\C)$ a finite group such that $\bC^2/H$ is of type $D_n$, $E_6$, $E_7$, or $E_8$. Then:
\begin{enumerate}
\item Assume $\rho$ is an endpoint of the Dynkin diagram. Then $I$ is a point on $E(\rho)$ and no other exceptional component if and only if $V(I)$ is a nonzero irreducible $H$-submodule ($\simeq\rho$) of $V_{\frac{h}{2}-d(\rho)}(\rho)\oplus V_{\frac{h}{2}+d(\rho)}(\rho)$ different from $V_{\frac{h}{2}+d(\rho)}(\rho)$
\item Assume $\rho$ is not an endpoint or center of the Dynkin diagram. Then $I$ is a point on $E(\rho)$ and no other exceptional component if and only if $V(I)$ is a nonzero irreducible $H$-submodule ($\simeq\rho$) of $V_{\frac{h}{2}-d(\rho)}(\rho)\oplus V_{\frac{h}{2}+d(\rho)}(\rho)$ different from $V_{\frac{h}{2}-d(\rho)}(\rho)$ and $V_{\frac{h}{2}+d(\rho)}(\rho)$
\item Assume $\rho$ is the center of the Dynkin diagram. Then $I$ is a point on $E(\rho)$ and no other exceptional component if and only if $V(I)$ is a nonzero irreducible component of $V_{\frac{h}{2}}(\rho)$ different from $\{S_1\cdot V_{\frac{h}{2}-1}(\rho')\}[\rho]$ for any $\rho'$ adjacent to $\rho$
\item Assume $\rho$ and $\rho'$ are adjacent with $d(\rho')=d(\rho)+1\geq2$ (so neither are the center). Then $I\in P(\rho,\rho')$ if and only if 
\[V(I)=V_{\frac{h}{2}-d(\rho)}(\rho)\oplus V_{\frac{h}{2}+d(\rho')}(\rho')\]
\item Assume $\rho$ is the center of the Dynkin diagram and $\rho'$ is adjacent to $\rho$. Then $I\in P(\rho,\rho')$ if and only if
\[V(I)=\{S_1\cdot V_{\frac{h}{2}-1}(\rho')\}[\rho]\oplus V_{\frac{h}{2}+1}(\rho').\]
\end{enumerate}
Here $S_1\cdot V_i(\rho)$ means the $H$-submodule of $\fm/\fn$ generated by $(x,y)$ and $V_i(\rho)$, and $W[\rho]$ refers to the $\rho$-summands of $W$, a homogenous $H$-submodule of $\fm/\fn$.
\end{thm}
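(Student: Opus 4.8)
The plan is to combine the moduli description of $Y=\hhilbc$ with an explicit understanding of the graded $H$-module $\fm/\fn$, reducing the statement to a finite computation in each of the types $D_n$, $E_6$, $E_7$, $E_8$.

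\textbf{The module $\fm/\fn$.} First I would pin down the graded $H$-module structure of $\fm/\fn$, equivalently of $\C[x,y]/\fn\cong\C[x,y]\otimes_R\C$ for $R=\C[x,y]^H$. Writing $\C[x,y]\cong\bigoplus_\rho\rho\otimes M_\rho$ as a graded $H$-equivariant $R$-module with $M_\rho=\Hom_H(\rho,\C[x,y])$ the maximal Cohen--Macaulay ``McKay module'' of $\rho$, one gets $\C[x,y]/\fn\cong\bigoplus_\rho\rho\otimes\overline{M_\rho}$ with $\overline{M_\rho}=M_\rho\otimes_R\C$. Reading off $\overline{M_\rho}$ from the minimal (matrix-factorization) free resolution of $M_\rho$ over the ADE hypersurface $R$ — or from the associated Molien-type generating function — gives exactly \cite[Theorem~10.6]{Ito-Nakamura:1999} as recalled above: in the relevant degree range $V_i(\rho)\cong\rho$ for $i=\tfrac{h}{2}\pm d(\rho)$ when $\rho$ is not central, $V_i(\rho)\cong\rho^2$ for $i=\tfrac{h}{2}$ when $\rho$ is central, and $V_i(\rho)=0$ otherwise. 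The Coxeter number $h$ and the distances $d(\rho)$ enter precisely as the degrees occurring in these resolutions.

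\textbf{Pinning down $V(I)$ and the curves $E(\rho)$.} For $I$ a point of the exceptional fiber set $W:=I/\fn$, a graded $\C[x,y]$-submodule of $\fm/\fn$ with $(\C[x,y]/\fn)/W\cong\C[H]$ as $H$-modules. Combining this regular-representation constraint with the character count above, I would show that $V(I)=W/\fm W$ can only be one of the submodules listed in (1)--(5): it meets each isotypic block $V_{h/2-d(\rho)}(\rho)\oplus V_{h/2+d(\rho)}(\rho)$ — or $V_{h/2}(\rho)$ at the center — in a subspace of the stated shape, and forcing the quotient to be the \emph{regular} representation pushes the generators of $W$ into the lowest admissible degrees away from certain codimension-one ``jump'' loci. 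To make this precise I would exhibit, for each nontrivial $\rho$, an explicit affine chart of $Y$ meeting $E(\rho)$: a flat family of $H$-clusters $I_\rho(s)$ over an affine line, glued to a second such chart to form a $\bP^1$; verify the regular-representation condition along it; and compute $V(I_\rho(s))$. Generically this is a single copy of $\rho$ inside $V_{h/2-d(\rho)}(\rho)\oplus V_{h/2+d(\rho)}(\rho)$ of the shape required by (1), (2), or (3), while at the two special values $s=0,\infty$ it acquires an extra summand $\rho'$ with $d(\rho')=d(\rho)\pm1$, giving (4) and (5). A local computation in the chart shows the resulting point $P(\rho,\rho')$ is reduced and that $E(\rho)$ and $E(\rho')$ cross transversally there; comparing the configuration of these $\bP^1$'s with the known resolution graph of $\C^2/H$ — equivalently, establishing the moduli-theoretic McKay correspondence at the same time — shows that $E(\rho)=\{I:\rho\subseteq V(I)\}$ is an irreducible component of the exceptional divisor isomorphic to $\bP^1$, and that $P(\rho,\rho')\neq\emptyset$ exactly when $\rho$ and $\rho'$ are Dynkin-adjacent.

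\textbf{The main obstacle.} The crux is the middle step for the exceptional types: unlike the cyclic case, where $Y$ is toric and every chart is monomial, there is no such model for $E_6$, $E_7$, $E_8$, and I see no way to avoid the group-by-group bookkeeping of which $H$-submodules of $\fm/\fn$ occur as $V(I)$ and of exactly where $V(I)$ jumps. The delicate point is to prove those jump loci are \emph{reduced} points at which $E(\rho)$ and $E(\rho')$ meet transversally — this is what ties the combinatorics of $V(I)$ to the Dynkin diagram. (The $D_n$ case can be lightened by realizing its resolution as a $\Z/2$-quotient of a type-$A$ resolution, but the exceptional cases appear to require working directly with the invariants and the modules $V_i(\rho)$.)
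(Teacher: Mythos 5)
You are attempting to reprove a statement that this paper does not prove at all: \Cref{IN.main} is imported verbatim as \cite[Theorem 10.7]{Ito-Nakamura:1999}, and the paper's only job is to quote it and then use it in \Cref{appendix}. So the relevant comparison is with Ito--Nakamura's original argument, and your outline does in fact track its skeleton (Kostant-type degree information giving \cite[Theorem~10.6]{Ito-Nakamura:1999}, the regular-representation constraint $\cO_{\bC^2}/I\simeq\bC[H]$, and a stratum-by-stratum identification of $V(I)$ with a check of transversality at the points $P(\rho,\rho')$). But as a proof it has a genuine gap, and you concede it yourself in your last paragraph: the entire content of parts (1)--(5) lies in the ``group-by-group bookkeeping'' you defer --- exhibiting the flat families of $H$-clusters in explicit charts, verifying the regular-representation condition along them, computing $V(I)$ and its jumps, and proving that the jump loci are reduced points where $E(\rho)$ and $E(\rho')$ meet transversally. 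None of this is carried out, and for $E_6$, $E_7$, $E_8$ there is no toric or monomial shortcut, so what you have written is a plan for a proof rather than a proof; the plan itself does not supply any mechanism that would make the deferred verification routine.

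There is also a technical slip worth fixing before any such verification: you assert that for $I$ in the exceptional fiber, $W:=I/\fn$ is a \emph{graded} $\C[x,y]$-submodule of $\fm/\fn$. That is false for the generic points of each $E(\rho)$: by the very statement you are proving, $V(I)$ there is a ``diagonal'' copy of $\rho$ inside $V_{\frac{h}{2}-d(\rho)}(\rho)\oplus V_{\frac{h}{2}+d(\rho)}(\rho)$, so $I$ is generated over $\fn$ by inhomogeneous elements mixing the two degrees; only special points (e.g.\ the $P(\rho,\rho')$ and other $\C^*$-fixed points) correspond to graded ideals. The character-count argument you propose must therefore be run degreewise on the associated graded or, as Ito--Nakamura do, directly on the submodules $V(I)\subseteq V_{\frac{h}{2}-d(\rho)}(\rho)\oplus V_{\frac{h}{2}+d(\rho)}(\rho)$ without assuming homogeneity of $I$. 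If your goal is only to use the theorem, as this paper does, cite it; if your goal is to reprove it, the case analysis you have postponed is the proof.
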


\subsection{\texorpdfstring{$G(2m,m,2)$}{G(2m,m,2)}}\label{appendixdn}
\numberwithin{equation}{subsection}

Let $m$ be an integer greater than or equal to $3$ and let $G:=G(2m,m,2)$.
The singularity $\C^2/H$ is of type $D_{m+2}$.
A presentation of $G$ is given in \cite[p.~36]{LehrerTaylor}, where $G$ is generated by
\begin{equation}\label{G.Dm.gens}
r_1=\begin{pmatrix} 0&1\\1&0\end{pmatrix},\quad t^m=\begin{pmatrix} -1 & 0  \\ 0 & 1 \end{pmatrix},\quad\text{and}\quad s=\begin{pmatrix}0&\varepsilon^{-1}\\ \varepsilon&0\end{pmatrix}.
\end{equation}
A presentation of $H$ is given in \cite[Ch. 13]{Ito-Nakamura:1999}, where $H$ is generated by
\[
\sigma=\begin{pmatrix} \varepsilon & 0 \\ 0 & \varepsilon^{-1} \end{pmatrix},\quad \tau=\begin{pmatrix} 0 & 1 \\ -1 & 0 \end{pmatrix},
\]
where $\varepsilon:=e^{\pi i / m}$ is an $2m$-th root of unity. Note that these presentations of $H$ and $G$ agree because $\sigma=r_1s$ and $\tau=r_1t^m$.

In order to construct our semiorthogonal decomposition, we need to know the ramification locus of $\C^2/H\to C^2/G$ as well as the $A$-action on $Y$.
In particular, we need to understand which points in $Y$ are fixed by $A$
and
which exceptional components of $Y$ are fixed or exchanged by $A$, allowing us to deduce how many exceptional components are in $Y/A$.

In this section, we will compute the following information about the action of $A$, which we summarize in a proposition. The module $V_{m+1}(\rho_m)$ consists of two $\rho_m$-summands, which are denoted by $V_{m+1}'(\rho_m)$ and
$V_{m+1}''(\rho_m)$.

\begin{prop}\label{prop.Dn} Let $G:=G(2m,m,2)$. The following statements describe the action of $A$ on the exceptional locus of $Y$.
\begin{enumerate}[(a)]
\item When $m$ is even, the action of $A$ maps each of the $m+2$ exceptional curves to itself. 
\item When $m$ is odd, 
  $A$ interchanges $E(\rho_{m+1}')$ and $E(\rho_{m+2}')$, and maps each of the other $m$ exceptional curves to themselves.
\item When $m$ is odd or even, $A$ fixes $E(\rho_k)$ pointwise for each even value of $k$ with $2\leq k\leq m$.
\item When $m$ is odd or even, $A$ fixes exactly two points of $E(\rho_k)$ for each odd value of $k$ with $1\leq k\leq m$. When $k\neq1,m$, these are the points of intersection with the adjacent exceptional curves. When $k=1$, one fixed point is the intersection of $E(\rho_1)$ with $E(\rho_2)$ and when $k=m$ is odd one fixed point is the intersection of $E(\rho_m)$ with $E(\rho_{m-1})$.
\item Within the exceptional locus, we have the following isolated fixed points:
\begin{enumerate}[(i)]
\item When $m$ is odd or even, there an isolated fixed point on $E(\rho_1)$ given by $V_2(\rho_1')=(xy)$.
\item When $m$ is odd, there is an isolated fixed point on $E(\rho_m)$ given by $V_{m+1}'(\rho_m)=(x^{m+1},y^{m+1})$.
\item When $m$ is even, $E(\rho_{m+1}')$ and $E(\rho_{m+2}')$ each contain an isolated fixed point given by $V_m(\rho_{m+1}')=(x^m-i^{m+2}y^m)$ and $V_m(\rho_{m+2}')=(x^m+i^{m+2}y^m)$ respectively.
\end{enumerate}
\end{enumerate}
\end{prop}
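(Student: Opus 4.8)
The plan is to work entirely within the explicit model of $Y = \HHilb(\C^2)$ for $H$ the binary dihedral group of order $4m$ provided by Ito and Nakamura, using the presentation of $G$ in \eqref{G.Dm.gens} and the element $\alpha := r_1 \in G \setminus H$ (or $s$, whichever is more convenient) to conjugate the $H$-module structure and hence act on the submodules $V_i(\rho)$ of $\fm/\fn$ that classify points of the exceptional locus via \Cref{IN.main}. First I would set up, once and for all, how $A = \langle \alpha \rangle$ acts on the irreducible representations of $H$: since $\alpha$ swaps the two one-dimensional characters $\rho_{m+1}'$, $\rho_{m+2}'$ at the forked end of the $D_{m+2}$ diagram (these are the two square roots of the determinant character) and fixes the others, this immediately gives parts (a) and (b) after checking whether the two forked characters are self-conjugate, which depends on the parity of $m$ exactly as in the remark following \eqref{G.Dm.gens}. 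The curve $E(\rho_k) \cong \bP^1$ for $\rho_k$ on the central ``spine'' is preserved, and $A$ acts on it by a projective-linear involution.

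Next, for each spine representation $\rho_k$ ($1 \le k \le m$), I would parametrize $E(\rho_k)$ explicitly. By \cite[Theorem~10.6]{Ito-Nakamura:1999} the relevant module is $V_{\frac h2 - d(\rho_k)}(\rho_k) \oplus V_{\frac h2 + d(\rho_k)}(\rho_k) \cong \rho_k^{\oplus 2}$ (with $h = 2m+2$), and \Cref{IN.main}(a),(b) says the points of $E(\rho_k)$ away from the two intersection points are the $H$-submodules $\simeq \rho_k$ inside this rank-two space, i.e.\ a $\bP^1$ of lines. Writing generators of these two homogeneous pieces explicitly in $\C[x,y]$ (monomials of the appropriate degrees, using the $\sigma$-weights), I would compute the action of $\alpha$ on the projective line of submodules. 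The key dichotomy is parity of $k$: when $k$ is even the two generators are interchanged or scaled by $\alpha$ in a way that makes the induced involution on $\bP^1$ trivial, giving (c); when $k$ is odd the involution is nontrivial with exactly two fixed points, giving (d), and one then checks directly from the formulas in \Cref{IN.main}(d),(e) that for $1<k<m$ these two fixed points are precisely the nodes $P(\rho_{k-1},\rho_k)$ and $P(\rho_k,\rho_{k+1})$, while for the endpoints $k=1$ and $k=m$ (the latter only relevant when $m$ is odd) one of the two fixed points is still the unique adjacent node and the other is the isolated point named in (e).

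For part (e) the remaining task is to identify the second, isolated fixed point on each odd spine endpoint and on the forked curves. This is where I would lean on the companion computation done in \Cref{int.G2mm2}: there it is shown, by taking flat limits of the radical ideals $I_{(t,t)}$ along the ramification components $V(f_2)$, $V(4f_2^m - f_1^2)$ (resp.\ $V(2f_2^{m/2} \mp f_1)$), that the fixed locus meets $E$ in exactly the ideals $(xy)$, $(x^{m+1}, y^{m+1})$, $(x^m - i^{m+2} y^m)$, $(x^m + i^{m+2} y^m)$ — one has only to match these against the submodule descriptions to name them $V_2(\rho_1')$, $V_{m+1}'(\rho_m)$, $V_m(\rho_{m+1}')$, $V_m(\rho_{m+2}')$ in the notation of \Cref{IN.main}, and to note via \Cref{puv}/\Cref{no_isolated} that these are the only fixed points on those curves beyond the nodes. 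I expect the main obstacle to be a careful bookkeeping one: pinning down the $H$-module generators of $V_{m+1}'(\rho_m)$ versus $V_{m+1}''(\rho_m)$ (and the two forked modules $V_m(\rho_{m+1}')$, $V_m(\rho_{m+2}')$) in a way that is consistent with Ito–Nakamura's conventions, since the labels of the two forked characters and the two copies of $\rho_m$ at the center must be coordinated correctly with the choice of $\varepsilon = e^{\pi i/m}$ and with the parity-dependent sign $i^{m+2}$ appearing in \Cref{int.G2mm2}; once that dictionary is fixed, the $\alpha$-action computation on each $\bP^1$ is a short linear-algebra check.
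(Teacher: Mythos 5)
Your outline for (a)--(d) is essentially the paper's own argument: fix a representative $\alpha\in G\setminus H$ (the paper uses $t^m$, which acts by $x\mapsto -x$; any representative works since they differ by elements of $H$, which act trivially on $Y$), use the Ito--Nakamura generators of the modules $V_i(\rho)$ to parametrize each exceptional $\bP^1$, and decide whether the induced involution is trivial or has two fixed points; the parity dichotomy in $k$ and the identification of the fixed points with the nodes is exactly what the appendix verifies by checking a third point on each curve. Two cautions there: $\rho_m$ is the trivalent \emph{center} of the $D_{m+2}$ diagram, not an endpoint, so for $m$ odd only the node with $E(\rho_{m-1})$ is fixed (the other two nodes are exchanged along with $E(\rho_{m+1}')$ and $E(\rho_{m+2}')$), and its parametrization goes through \Cref{IN.main}(3)/(5) rather than (1)/(2); the second fixed point $V_{m+1}'(\rho_m)=(x^{m+1},y^{m+1})$ still has to be exhibited by a direct check, as the paper does.

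The genuine gap is in part (e), in the claim that these fixed points are \emph{isolated} within the exceptional locus. Importing the flat-limit computation of \Cref{int.G2mm2} is legitimate (it is independent of the proposition and shows the four ideals are limits of fixed points, hence fixed and lying on the named curves), but it only shows the points \emph{are} fixed; it says nothing about whether the ambient curves are pointwise fixed. Your appeal to \Cref{puv}/\Cref{no_isolated} cannot close this: \Cref{puv} only describes the components of the fixed locus meeting $N$ and does not preclude fixed curves inside $E$ (indeed the even spine curves are pointwise fixed), and \Cref{no_isolated} is proved in the paper \emph{using} the fixed-locus analysis of this very proposition together with the limit computations, so invoking it here is circular. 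Moreover, for $m$ even the forked curves $E(\rho_{m+1}')$, $E(\rho_{m+2}')$ lie outside your spine parity dichotomy, so nothing in your argument shows the involution on them is nontrivial. What is needed --- and what the paper supplies --- is a direct check, for each of $E(\rho_1')$, $E(\rho_m)$ ($m$ odd), and $E(\rho_{m+1}')$, $E(\rho_{m+2}')$ ($m$ even), that $\alpha$ moves some third point, e.g.\ the submodule $((x^m-i^{m+2}y^m)+xy(x^m+i^{m+2}y^m))$ on $E(\rho_{m+1}')$; only then do the two exhibited fixed points exhaust the fixed locus of those curves and qualify as isolated in $E$. This is a short computation entirely within your framework, but it must be stated and carried out rather than delegated to \Cref{puv}/\Cref{no_isolated}.
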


To compute the $A$-action, we need a representative $\alpha$ of the nontrivial element of $A$, which we can conveniently take to be $t^m$ from the generators of $G$ \eqref{G.Dm.gens}, which acts by sending $x$ to $-x$ and fixing $y$.

\subsubsection{$E(\rho_1')$} $\rho_1'$ is an endpoint of the Dynkin diagram a distance of $m-1$ from the center. By \Cref{IN.main}(1), the points of $E(\rho_1')$ away from its intersection with $E(\rho_2)$ are given by proper $H$-invariant submodules of $V_2(\rho_1')\oplus V_{2m}(\rho_1')$ other than $V_{2m}(\rho_1')$. By \Cref{IN.main}(4), its intersection with $E(\rho_2)$ is given by $V_{2m}(\rho_1')\oplus V_3(\rho_2)$. By \cite{Ito-Nakamura:1999}, we have 
\[
V_2(\rho_1')=(xy),\qquad V_{2m}(\rho_1')=(x^{2m}-y^{2m}),\qquad V_3(\rho_2)=(x^2y,xy^2).
\]
We may directly verify that $\alpha$ fixes $V_2(\rho_1')\oplus V_{2m}(\rho_1')$,
$V_2(\rho_1')$, and $V_{2m}(\rho_1')\oplus V_3(\rho_2)$, 
and thus $\alpha$ is an involution of $E(\rho_1')$ that fixes its point of intersection with $E(\rho_2)$ as well as the point given by $V_2(\rho_1')=(xy)$.
To determine whether $\alpha$ fixes all points on $E(\rho_1')$, we check its action on a third point.
We check that $(xy+(x^{2m}-y^{2m}))$ is an $H$-module in \texttt{FixedLocus.m2} and thus corresponds to a point on $E(\rho_1')$, and we confirm that:
\[
\alpha(xy+(x^{2m}-y^{2m})) = (xy-(x^{2m}-y^{2m})) \neq (xy+(x^{2m}-y^{2m})).
\]
Thus $A$ does not fix $E(\rho_1')$ pointwise.

\subsubsection{$E(\rho_k)$ for $2\leq k\leq m-1$}
In these cases, 
$\rho_k$  is not an endpoint of the Dynkin diagram and is a distance of $m-k$ from the center. By \Cref{IN.main}(1), the points of $E(\rho_k)$ away from its intersections with $E(\rho_{k-1})$ and $E(\rho_{k+1})$ are given by proper $H$-invariant submodules of $V_{k+1}(\rho_k)\oplus V_{2m-k+1}(\rho_k)$ other than $V_{k+1}(\rho_k)$ and $V_{2m-k+1}(\rho_k)$. By \Cref{IN.main}(4), its intersection with $E(\rho_{k-1})$ is given by $V_{k+1}(\rho_k)\oplus V_{2m-k+2}(\rho_{k-1})$ and its intersection with $E(\rho_{k+1})$ is given by $V_{2m-k+1}(\rho_k)\oplus V_{k+2}(\rho_{k+1})$, except in the case of $k=m-1$ in which case the intersection of $E(\rho_{m-1})$ with $E(\rho_m)$ is given by $V_{m+2}(\rho_{m-1})\oplus V_{m+1}''(\rho_m)$. By \cite{Ito-Nakamura:1999}, we have 
\[
V_{k+1}(\rho_k)=(x^ky,xy^k),\quad V_{2m-k+1}(\rho_k)=(x^{2m-k+1},y^{2m-k+1}),\quad V_{m+1}''=(x^my,xy^m).
\]
We may directly verify that $\alpha$ fixes each of the above ideals, and thus it is an involution of
$E(\rho_k)$ that fixes its two points of intersection with other exceptional components. 
To determine whether $\alpha$ fixes every point in  $E(\rho_k)$, we examine the action of $\alpha$ on a third point on $E(\rho_k)$: 
We check that $(x^ky+y^{2m-k+1}, xy^k+(-1)^kx^{2m-k+1}) \subsetneq V_{k+1}(\rho_k)\oplus V_{2m-k+1}(\rho_k)$
is an $H$-module in \texttt{FixedLocus.m2} and we confirm that
\[
\alpha((x^ky+y^{2m-k+1}, xy^k+(-1)^kx^{2m-k+1})) = ((-1)^kx^ky+y^{2m-k+1}, xy^k+x^{2m-k+1}),
\]
and thus $\alpha$ fixes this point when $k$ is even and does not fix it $k$ is odd. Thus $A$ fixes $E(\rho_k)$ pointwise when $k$ is even, but does not when $k$ is odd.

\subsubsection{$E(\rho_{m+1}')$ and $E(\rho_{m+2}')$} $\rho_{m+1}'$ and $\rho_{m+2}'$ are endpoints of the Dynkin diagram a distance of~$1$ from the center. By \Cref{IN.main}(1) the points on $E(\rho_{m+1}')$ and $E(\rho_{m+2}')$ away from their intersection with $E(\rho_m)$ are given by nontrivial, proper $H$-submodules of $V_m(\rho_{m+1}')\oplus V_{m+2}(\rho_{m+1}')$ other than $V_{m+2}(\rho_{m+1}')$ and nontrivial, proper $H$-submodules of $V_m(\rho_{m+2}')\oplus V_{m+2}(\rho_{m+2}')$ other than $V_{m+2}(\rho_{m+2}')$, respectively. By \ref{IN.main}(5), the intersections of
$E(\rho_{m+1}')$ and $E(\rho_{m+2}')$
with $E(\rho_m)$ are given by $V_{m+2}(\rho_{m+1}')\oplus[S_1\cdot V_{m}(\rho_{m+1}')]$ and $V_{m+2}(\rho_{m+2}')\oplus[S_1\cdot V_{m}(\rho_{m+2}')]$. By \cite[p.~217]{Ito-Nakamura:1999}, we have 
\begin{align*}
V_m(\rho_{m+1}')&=(x^m-i^{m+2}y^m),\qquad V_{m+2}(\rho_{m+1}')=(xy(x^m+i^{m+2}y^m)),\\
V_m(\rho_{m+2}')&=(x^m+i^{m+2}y^m),\qquad V_{m+2}(\rho_{m+2}')=(xy(x^m-i^{m+2}y^m)).
\end{align*}
We may directly verify that when $m$ is odd, $\alpha$ exchanges 
$V_m(\rho_{m+1}')\oplus V_{m+2}(\rho_{m+1}')$ with $V_m(\rho_{m+2}')\oplus V_{m+2}(\rho_{m+2}')$ but, when $m$ is even, $\alpha$ fixes each of them.

Thus, when $m$ is odd, $\alpha$ exchanges the components $E(\rho_{m+1}')$ and $E(\rho_{m+2}')$, but when $m$ is even, $\alpha$ acts on an involution on each of $E(\rho_{m+1}')$ and $E(\rho_{m+2}')$.

In the case when $m$ is even, we may verify directly that the following points are fixed by $\alpha$:
$V_m(\rho_{m+1}')=(x^m-i^{m+2}y^m)$ and $V_m(\rho_{m+2}')=(x^m+i^{m+2}y^m)$.
However, to determine whether $\alpha$ is a nontrivial involution on
$E(\rho_{m+1}')$ and $E(\rho_{m+2}')$, we check additional points on each of them.
We confirm that $((x^m-i^{m+2}y^m)+xy(x^m+i^{m+2}y^m))$ is an $H$-module and thus a point on $E(\rho_{m+1}')$. Then, because $m$ is even,
\begin{align*}
\alpha((x^m-i^{m+2}y^m)+xy(x^m+i^{m+2}y^m))&=((x^m-i^{m+2}y^m)-xy(x^m+i^{m+2}y^m))\\
&\neq ((x^m-i^{m+2}y^m)+xy(x^m+i^{m+2}y^m)).
\end{align*}
We also confirm that $((x^m+i^{m+2}y^m)+xy(x^m-i^{m+2}y^m))$ is an $H$-module and thus corresponds to a point on $E(\rho_{m+2}')$. 
\begin{align*}
\alpha((x^m+i^{m+2}y^m)+xy(x^m-i^{m+2}y^m))&=((x^m+i^{m+2}y^m)-xy(x^m-i^{m+2}y^m))\\
&\neq ((x^m+i^{m+2}y^m)+xy(x^m-i^{m+2}y^m)).
\end{align*}
Thus, when $m$ is even, $A$ does not pointwise fix $E(\rho_{m+1}')$ and $E(\rho_{m+2}')$.

\subsubsection{$E(\rho_m)$}
We may deduce the nature of the action of $A$ on $E(\rho_m)$ from the previous sections.
For any value of $m$, the action is an involution. 

For odd $m$, since $E(\rho_{m+1}')$ and $E(\rho_{m+2}')$ are exchanged, the involution is nontrivial. The point of intersection between $E(\rho_m)$ and $E(\rho_{m-1})$ is fixed, and we may verify directly that the other fixed point is
$V_{m+1}'(\rho_m)=(x^{m+1},y^{m+1})$.

For even $m$, 
since $\alpha$ fixes the points of intersection of $E(\rho_m)$ with its three adjacent components, the action of $A$ fixes $E(\rho_m)$.

\subsection{\texorpdfstring{$\gtw$}{G12}}\label{gtw.comp}

In this section, we set $G:=\gtw$. 
The following are generators of $G$ \cite{LehrerTaylor} where $\varepsilon:=e^{2\pi i / 8}$ is an $8$-th root of unity:
\[
  r_3=\frac{1}{\sqrt{2}}\begin{pmatrix} 1 & -1 \\ -1 & -1 \end{pmatrix},\quad
  r_3'=\frac{1}{\sqrt{2}}\begin{pmatrix} 1 & 1 \\ 1 & -1 \end{pmatrix},\quad
  r_3''=\begin{pmatrix} 0 & \varepsilon\\ \varepsilon^7& 0 \end{pmatrix}.
\]  
The intersection $H:=G\cap\SL(2,\C)$ is
the binary tetrahedral group and the singularity $\bC/H$ is of type $E_6$.
The group $H$ is 
generated by the following elements,
matching the notation given in \cite[\S 14]{Ito-Nakamura:1999}:
\[
  \sigma=\begin{pmatrix} i & 0 \\ 0 & -i \end{pmatrix},\quad
  \tau=\begin{pmatrix} 0 & 1 \\ -1 & 0 \end{pmatrix},\quad
  \mu=\frac{1}{\sqrt{2}}\begin{pmatrix} \varepsilon^7 & \varepsilon^7 \\ \varepsilon^5 & \varepsilon \end{pmatrix}.
\]  
We note that these embeddings are compatible because $\tau=r_3r_3'$, $\tau^2=-I$, $\sigma=-(r_3r_3'r_3'')^2$, and $\mu=(r_3'r_3)(r_3'r_3'')^2$. We choose the following element $\alpha\in G\setminus H$, which we will use to compute the action of $A$. We have chosen this particular representative for convenience since it is diagonal and so acts by scaling each of $x$ and $y$; it is not a reflection, but is the product of a reflection with a rotation, as we show:
\[
\alpha=
\begin{pmatrix}
\eps&0  \\0&\eps^3
\end{pmatrix}=
\begin{pmatrix} 0 & \eps \\ \eps^{-1} & 0 \end{pmatrix}\cdot \begin{pmatrix} 0 & \eps^4 \\ 1 & 0 \end{pmatrix}
  \]
  The matrices act on the coordinate vectors $x$ and $y$ in Ito and Nakamura's description of the points of $Y$.

The exceptional locus of $Y$ consists of six exceptional curves: $E(\rho_1')$, $E(\rho_1'')$, $E(\rho_2')$, $E(\rho_2'')$, $E(\rho_2)$, and $E(\rho_3)$.
Their intersections are given by the Dynkin diagram \Cref{E6.dynkin}.

The results of this section are summarized in the following proposition:

\begin{prop}\label{G12_A_summary}
  Let $G:=G_{12}$. The action of $A$ on the exceptional locus of $Y$:
\begin{enumerate}[(a)]
\item exchanges $E(\rho_1')$ with $E(\rho_1'')$ and $E(\rho_2')$ with $E(\rho_2'')$.
\item restricts to an involution on $E(\rho_2)$ and $E(\rho_3)$.
\item fixes each point on $E(\rho_2)$ and fixes one additional point on $E(\rho_3)$ given by:
  $$V(I)=(p_1(-\omega^2\phi+\psi),p_3(-\phi+\psi),p_2(-\omega\phi+\psi)).$$
\end{enumerate}
\end{prop}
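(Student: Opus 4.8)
The plan is to follow the same strategy as in the $G(2m,m,2)$ case treated in \S\ref{appendixdn}: combine Ito--Nakamura's explicit description of the exceptional curves $E(\rho)$ and their intersection points (Theorem~\ref{IN.main}, together with the $E_6$ module tables of \cite[\S14]{Ito-Nakamura:1999}) with a direct computation of the action of the diagonal element $\alpha = \mathrm{diag}(\varepsilon,\varepsilon^3) \in G\setminus H$. Since $\alpha$ is diagonal it multiplies each monomial, hence each module generator recorded by Ito and Nakamura, by an explicit root of unity; and conjugation by $\alpha$ induces the order-two outer automorphism $c_\alpha$ of the binary tetrahedral group $H$, which interchanges the two nontrivial one-dimensional representations $\rho_1',\rho_1''$ and the two small two-dimensional representations $\rho_2',\rho_2''$ while fixing the natural representation $\rho_2$ and the three-dimensional representation $\rho_3$. (This is the reflection across the horizontal axis of the diagram in \Cref{E6.dynkin}, cf.\ the discussion of contragredients in \S\ref{sec:groups}.) Since an ideal $I$ with $V(I)\simeq\rho$ is carried by $\alpha$ to an ideal with $V(\alpha I)\simeq\rho^{c_\alpha}$, this already indicates the shape of parts (a) and (b); what remains is to make it precise on the explicit modules.

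For part (a), I would record from \cite[\S14]{Ito-Nakamura:1999} the modules $V_i(\rho_1'), V_i(\rho_1'')$ and $V_i(\rho_2'), V_i(\rho_2'')$ parametrizing $E(\rho_1'),E(\rho_1'')$ and $E(\rho_2'),E(\rho_2'')$ via Theorem~\ref{IN.main}(1), together with the relevant intersection data from Theorem~\ref{IN.main}(4),(5), and then check directly that $\alpha$ carries the data defining $E(\rho_1')$ to that defining $E(\rho_1'')$ and conversely, and likewise for $E(\rho_2')$ and $E(\rho_2'')$. As these four curves are pairwise distinct, $\alpha$ genuinely interchanges the two pairs. Part (b) is then immediate: $A$ permutes the six exceptional curves, and by (a) it swaps two of the pairs, so the two remaining curves $E(\rho_2)$ and $E(\rho_3)$ are each mapped to themselves; since $\alpha^2=\sigma\in H$, the induced $A$-action restricts to an involution on each of them.

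Part (c) is the substantive one. On $E(\rho_2)$, which is an endpoint of the diagram and so meets the rest of the exceptional locus only at $P(\rho_2,\rho_3)$, I would use Theorem~\ref{IN.main}(1),(5) to present the curve as the $\bP^1$ of sub-$\rho_2$'s of $V_5(\rho_2)\oplus V_7(\rho_2)$ and then exhibit three $\alpha$-fixed points on it: the point corresponding to $V_5(\rho_2)$, the point corresponding to $V_7(\rho_2)$ (which is the intersection $P(\rho_2,\rho_3)$), and one further point corresponding to an explicit $H$-submodule built from generators of both summands --- that this third module really is an $H$-submodule and is $\alpha$-stable is the kind of assertion one certifies in \texttt{FixedLocus.m2} \cite{code}. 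An automorphism of $\bP^1$ with three fixed points is the identity, so $\alpha$ fixes $E(\rho_2)$ pointwise. On $E(\rho_3)$ the induced involution of $\bP^1$ is nontrivial --- for instance because it interchanges the two distinct points $P(\rho_3,\rho_2')$ and $P(\rho_3,\rho_2'')$ from part (a) --- hence it has exactly two fixed points. One of them must be $P(\rho_2,\rho_3)$, since that point already lies on the pointwise-fixed curve $E(\rho_2)$. For the other, I would write $V_6(\rho_3)\simeq\rho_3\oplus\rho_3$ in Ito--Nakamura's coordinates using the auxiliary polynomials $p_1,p_2,p_3,\phi,\psi$ of \S\ref{gtw.comp}, diagonalize the resulting projective-linear action of $\alpha$ on this $\bP^1$, and read off the second eigenline; the claim is that it is the copy of $\rho_3$ spanned by the three polynomials displayed in (c). Both the $H$-module structure of that span and its $\alpha$-invariance are again to be verified with \texttt{FixedLocus.m2}.

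The step I expect to be the main obstacle is precisely this last identification. Ito--Nakamura's $E_6$ tables are intricate, and one must unwind the $\rho_3$-isotypic decomposition of $V_6(\rho_3)$ in their coordinates, track how the polynomials $p_i,\phi,\psi$ transform under both $H$ and $\alpha$, and match the resulting $\alpha$-fixed eigenline with the stated module; this is where the computer-algebra verification is genuinely necessary rather than merely a convenience.
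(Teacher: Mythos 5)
Your proposal is correct and follows essentially the same route as the paper: the same diagonal representative $\alpha=\mathrm{diag}(\varepsilon,\varepsilon^3)$, the same use of Theorem~\ref{IN.main} with Ito--Nakamura's explicit $E_6$ modules, the same $\bP^1$-parametrizations of $E(\rho_2)$ and $E(\rho_3)$ (three fixed points forcing the identity on $E(\rho_2)$, and diagonalizing the induced involution on $V_6(\rho_3)$ to locate the extra fixed point), with the module-membership and $\alpha$-stability checks delegated to \texttt{FixedLocus.m2}. The only difference is your a priori contragredient/outer-automorphism argument predicting which curves are swapped or preserved, which the paper instead confirms directly on the explicit modules; this is a harmless (and correct) shortcut for parts (a) and (b).
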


In this case,
we use the following notation for generators of $H$-modules  in the exceptional locus of $Y$ \cite[p.\ 227]{Ito-Nakamura:1999}, where $\omega:=e^{2\pi i/3}$:
\begin{align*}
  p_1&=x^2-y^2, & p_2&=x^2+y^2, & p_3&=xy,
& T&=p_1p_2p_3,\ W=\varphi\psi.
  \\
q_1&=x^3+(2\omega+1)xy^2,&q_2&=y^3+(2\omega+1)x^2y,&s_1&=x^3+(2\omega^2+1)xy^2,&s_2&=y^3+(2\omega^2+1)x^2y,\\
\varphi&=p_2^2+4\omega p_3^2,&\psi&=p_2^2+4\omega^2p_3^2,&
\gamma_1&=x^5-5xy^4,&\gamma_2&=y^5-5x^4y,\\
\end{align*}
The action of $\alpha$ fixes and exchanges some of these generators up to scaling. In particular,
we will use that $\alpha(\varphi)=-\psi$, $\alpha(\psi)=-\varphi$,
$\alpha(s_1)=(t^5-t) q_1$, and 
$\alpha(s_2)=t^3q_2$,
$\alpha(\gamma_1)=\eps^5\gamma_1$, and
$\alpha(\gamma_2)=\eps^7\gamma_2$,
where $t:=e^{\frac{2\pi i}{24}}$, a primitive $24$-th root of unity. 

\subsubsection{$E(\rho_1')$ and $E(\rho_1'')$} Both $\rho_1'$ and $\rho_1''$ are endpoints of the Dynkin diagram \Cref{E6.dynkin} a distance of~$2$ from the center. By \Cref{IN.main}(1), the points of $E(\rho_1')$ and $E(\rho_1'')$, away from their intersections, are given by proper $H$-invariant submodules of $V_4(\rho_1')\oplus V_8(\rho_1')$ and
$V_4(\rho_1'')\oplus V_8(\rho_1'')$ other than $V_8(\rho_1')$ and $V_8(\rho_1'')$. By \Cref{IN.main}(4), the intersections of $E(\rho_1')$ with $E(\rho_2')$ and of $E(\rho_1'')$ with $E(\rho_2'')$ are given by
$V_5(\rho_2')\oplus V_8(\rho_1')$ and $V_5(\rho_2'')\oplus V_8(\rho_1'')$. 
In terms of the generators above, we have:
\begin{align*}
  &V_4(\rho_1')\oplus V_8(\rho_1')=(\varphi)\oplus(\psi^2)&
  &V_4(\rho_1'')\oplus V_8(\rho_1'')=(\psi)\oplus(\varphi^2)
  \\
  &V_5(\rho_2')\oplus V_8(\rho_1')=(x\varphi,y\varphi)\oplus(\psi^2) &
  &V_5(\rho_2'')\oplus V_8(\rho_1'')=(x\psi,y\psi)\oplus(\varphi^2) 
\end{align*}
By our observations above on the action of $\alpha$, $\alpha$ exchanges the generators of $V_4(\rho_1')$, $V_8(\rho_1')$, and $V_4(\rho_2')$ with those of $V_4(\rho_1'')$, $V_8(\rho_1'')$, and $V_4(\rho_2'')$ up to multiplication by scalars. Thus, the action of $A$ exchanges $E(\rho_1')$ with $E(\rho_1'')$, including the points of intersection with $E(\rho_2')$ and $E(\rho_2'')$.

\subsubsection{$E(\rho_2')$ and $E(\rho_2'')$} Now consider the curves $E(\rho_2')$ and $E(\rho_2'')$. The representations $\rho_2'$ and $\rho_2''$ are both a distance of $1$ from the center of the Dynkin diagram and so by \Cref{IN.main}(2), their points, away from intersections with other curves, are given by the proper $H$-invariant submodules of the following modules, respectively:
\[
V_5(\rho_2')\oplus V_7(\rho_2')=(x\varphi,y\varphi)\oplus (s_1\psi,s_2\psi),
  \quad
V_5(\rho_2'')\oplus V_7(\rho_2'')=(x\psi,y\psi)\oplus(q_1\varphi,q_2\varphi).
\]
Again, the action of $\alpha$ exchanges, up to scalars, the generators of
$V_5(\rho_2')$ and $V_7(\rho_2')$ with those of
$V_5(\rho_2'')$ and $V_7(\rho_2'')$, and so, 
the action of $A$ exchanges $E(\rho_2')$ with $E(\rho_2'')$.

\subsubsection{$E(\rho_2)$} Before examining the intersections of $E(\rho_3)$ with $E(\rho_2')$ and $E(\rho_2'')$, we analyze $E(\rho_2)$. To carry out this analysis, we parametrize $E(\rho_2)$.
By \Cref{IN.main}(1), the non-intersection points of $E(\rho_2)$ are given by $H$-invariant submodules, other than $V_7(\rho_2)$, of the following:
\begin{equation}\label{E2}
  V_5(\rho_2)\oplus V_7(\rho_2)=(\gamma_1,\gamma_2)\oplus(s_1\varphi,s_2\varphi).
\end{equation}
We parametrize $E(\rho_2)$ by setting each projective point $[a:b]$ with $a\neq 0$ in correspondence with the submodule $(a\gamma_1+bs_2\varphi,a\gamma_2-bs_1\varphi)$, 
and thus $[0:1]$ corresponds to the point of intersection with $E(\rho_3)$.
It remains to prove that $(a\gamma_1+bs_2\varphi,a\gamma_2-bs_1\varphi)$ is $H$-invariant. 
We check that it is fixed by each of $\tau$, $\sigma$, and $\mu$: The action of $\tau$ exchanges the generators up to scalars, the action of $\sigma$ scales each generator, and $\mu$ maps to a linear combination of the generators, as can be verified in \texttt{FixedLocus.m2}:
\begin{align*}
  &\tau(a\gamma_1+bs_2\varphi)=-a\gamma_2+bs_1\varphi,
  & &\sigma(a\gamma_1+bs_2\varphi)=i(a\gamma_1+bs_2\varphi), \\
  &\tau(a\gamma_2-bs_1\varphi)=a\gamma_1+bs_2\varphi,
  & &\sigma(a\gamma_2-bs_1\varphi)=-i(a\gamma_2-bs_1\varphi),\\
  &\mu(a\gamma_1+bs_2\varphi)=
\tfrac{-i+1}{2}(a\gamma_1+bs_2\varphi)-\tfrac{i+1}{2}(a\gamma_2-bs_1\varphi),&&\\
  & \mu(a\gamma_2-bs_1\varphi)=
\tfrac{-i+1}{2}(a\gamma_1+bs_2\varphi)+\tfrac{i+1}{2}(a\gamma_2-bs_1\varphi).&&
\end{align*}
We also verify in \texttt{FixedLocus.m2} that $\alpha$ fixes \eqref{E2}, meaning that the action of $A$ restricts to  an involution of $E(\rho_2)$.
In order to check that $\alpha$ fixes every point on 
$E(\rho_2)$, we furthermore verify that it fixes the three points
 $[1:1]$, $[1:-1]$,
and $[1:0]$.

This argument implies that the intersection point between $E(\rho_2)$ and $E(\rho_3)$ must also be fixed by the action of $A$, which we nevertheless check to aid in our parametrization of $E(\rho_3)$.
By \Cref{IN.main}(5), the intersection
of $E(\rho_2)$ and $E(\rho_3)$
corresponds to the module
denoted $\{S_1\cdot V_5(\rho_2)\}[\rho_3]\oplus V_7(\rho_2)$, where 
$\{S_1\cdot V_5(\rho_2)\}[\rho_3]$ is the summand of $S_1\cdot V_5(\rho_2)$
that is isomorphic to $\rho_3$ as an $H$-module. Since $S_1\cdot V_5(\rho_2)$ has four generators $x\gamma_1$, $y\gamma_1$, $x\gamma_2$, $y\gamma_2$, we need only identify a $3$-dimensional invariant subspace. We claim that:
\[
\{S_1\cdot V_5(\rho_2)[\rho_3]=(y\gamma_1+x\gamma_2,\ x\gamma_1+y\gamma_2,\ x\gamma_1-y\gamma_2).
\]  
In \texttt{FixedLocus.m2}, it is verified this submodule is $H$-invariant and further checked that $\{S_1\cdot V_5(\rho_2)\}[\rho_3]\oplus V_7(\rho_2)$ is fixed by $\alpha$, as expected.

\subsubsection{$E(\rho_3)$}
By \Cref{IN.main}(3), the non-intersection points of $E(\rho_3)$ correspond to proper submodules of $V_6(\rho_3)\simeq\rho_3^{\oplus 2}$ distinct from $\{S_1\cdot V_5(\rho')\}[\rho_3]$ for any $E(\rho')$ intersecting $E(\rho_3)$.
Since $V_6(\rho_3)$ is fixed by $A$ as shown in \texttt{FixedLocus.m2}, the action of $A$ restricts to an involution on $V_6(\rho_3)$.
We parametrize $E(\rho_3)$ by setting each projective point $[a:b]$
with $a\neq 0$, $b\neq 0$ 
in correspondence with the following submodule:
\[
((x^2-y^2)(a\varphi+b\psi), xy(a\omega\varphi+b\psi), (x^2+y^2)(a\omega^2\varphi+b\psi))
\subseteq   V_6(\rho_3)=(x^2,xy,y^2)\cdot(\varphi,\psi).
\]  
Since $V_5(\rho_2')=(x\phi,y\phi)$ and $V_5(\rho_2'')=(x\psi,y\psi)$  the points of intersection of $E(\rho_3)$ with $E(\rho_2')$ and $E(\rho_2'')$ are parametrized by $[1:0]$ and $[0:1]$, respectively. It is checked in \texttt{FixedLocus.m2} that $[1:\omega]$ corresponds to the point of intersection with $E(\rho_2)$.

Finally, we examine the action of $A$ on $E(\rho_3)$. The following actions are checked in \texttt{FixedLocus.m2}:
\begin{align*}
&\alpha(\varphi)=-\psi,\quad
\alpha(\psi)=-\varphi,\quad
\alpha(x^2)=ix^2,\quad
\alpha(xy)=-xy,\quad
\alpha(y^2)=-iy^2
\\  &\alpha((x^2-y^2)(a\varphi+b\psi))
=-i\omega(x^2+y^2)(b\omega^2\varphi+a\omega^2\psi)    
  \\
  &\alpha(xy(a\omega\varphi+b\psi))
=-xy\omega^2(b\omega\varphi+a\omega^2\psi)
  \\
&\alpha((x^2+y^2)(a\omega^2\varphi+b\psi))=
i(x^2-y^2)(b\varphi+a\omega^2\psi)  
\end{align*}
Thus, $A$ acts on $E(\rho_3)$ by sending
$[a:b]$ to $[b\omega:a]$. 
As expected, $A$ exchanges the points of intersection with
$E(\rho_2')$ and $E(\rho_2'')$.
Its fixed points are $[1:\omega]$ (the point of intersection with $E(\rho_2)$) and $[1:t^{20}]=[1:-\omega]$, which is not a point of intersection with another exceptional curve. We list the specific module $V(I)$ in \Cref{G12_A_summary}(c).

\subsection{$G_{13}$}\label{app.gthir} In this section, we set $G:=G_{13}$. A presentation of $G$ is given in \cite[p.~88]{LehrerTaylor}, where $G$ is generated by
\[
r=\begin{pmatrix}1&0\\0&-1\end{pmatrix},\quad r_3=\frac{1}{\sqrt{2}}\begin{pmatrix}1&-1\\-1&-1\end{pmatrix},\quad r_3''=\begin{pmatrix}0&\varepsilon\\\varepsilon^7&0\end{pmatrix}.
\]
The intersection $H:=G\cap\SL(2,\bC)$ is the binary octahedral group and the singularity $\bC^2/H$ is of type $E_7$. A presentation of this group is given in \cite[Ch.~15]{Ito-Nakamura:1999}, where $H$ is generated by
\[
  \sigma=\begin{pmatrix} i & 0 \\ 0 & -i \end{pmatrix},\quad
  \tau=\begin{pmatrix} 0 & 1 \\ -1 & 0 \end{pmatrix},\quad
  \mu=\frac{1}{\sqrt{2}}\begin{pmatrix} \varepsilon^7 & \varepsilon^7 \\ \varepsilon^5 & \varepsilon \end{pmatrix},\quad
  \kappa=\begin{pmatrix}\varepsilon&0\\0&\varepsilon^7\end{pmatrix},
\]  
where $\varepsilon\colon=e^{\pi i/4}$ is a primitive eighth root of unity. We note that these embeddings are compatible because $rr_3r=r_3'$ and so this group contains the embedding of $G_{12}$ used in \Cref{gtw.comp} and thus $\sigma$, $\tau$, and $\mu$. Finally, note that $\kappa=r_3''r\tau$.

In this section, we will compute the following information about the action of $A$, which we summarize in a proposition:

\begin{prop}\label{G13_A_summary}
Let $G:=G_{13}$. The action of $A$ on the exceptional locus of $Y$:
\begin{enumerate}[(a)]
\item maps every exceptional curve to itself;
\item pointwise fixes $E(\rho_2)$, $E(\rho_2')$, and $E(\rho_4)$;
\item fixes exactly two points on $E(\rho_1')$, $E(\rho_2'')$, $E(\rho_3)$, and $E(\rho_3')$, leaving a single isolated (within the exceptional locus) fixed point on $E(\rho_1')$ and $E(\rho_2'')$ given by $V_6(\rho_1')=(T)$ and $V_8(\rho_2'')=(W)$ respectively.
\end{enumerate}
\end{prop}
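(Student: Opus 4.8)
The plan is to carry out for $G_{13}$ the same kind of computation as in \Cref{prop.Dn} and \Cref{G12_A_summary}. First I would fix a representative $\alpha\in G_{13}\setminus H$ of the nontrivial class of $A=G_{13}/H$; a convenient choice is the listed generator $r$ of $G_{13}$, which has $\det r=-1$ (so $r\notin H$) and acts on $\C[x,y]$ by $x\mapsto x$, $y\mapsto -y$. Since $G_{13}$ contains the embedding of $G_{12}$ from \Cref{gtw.comp}, the relevant covariants ($p_1,p_2,p_3$, $T=p_1p_2p_3$, $W$, $\varphi$, $\psi$, \dots) and the binary octahedral modules $V_i(\rho)$ are available explicitly. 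For each of the seven exceptional curves $E(\rho)$ I would use \Cref{IN.main}, with Coxeter number $h=18$ (so $h/2=9$), to present the points of $E(\rho)$ as the proper nonzero irreducible $H$-submodules of $V_{9-d(\rho)}(\rho)\oplus V_{9+d(\rho)}(\rho)$, or, for the central curve $E(\rho_4)$, as the irreducible $H$-submodules of $V_9(\rho_4)\simeq\rho_4^{\oplus2}$; then I would compute the action of $\alpha$ on the relevant generators, checking $H$-invariance and $\alpha$-(in)variance by hand where possible and in \texttt{FixedLocus.m2} otherwise.

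For part~(a): the $A$-action on $Y$ preserves the exceptional divisor $E$, since $E$ is the fiber over the $A$-fixed singular point of $\C^2/H$; hence $A$ permutes the irreducible components of $E$ compatibly with the intersection graph, which is the $E_7$ Dynkin diagram. As $\operatorname{Aut}(E_7)$ is trivial, every component is sent to itself. (Equivalently, and in keeping with the explicit computations elsewhere in the appendix, one checks directly that $\alpha$ preserves each subspace $V_{9\pm d(\rho)}(\rho)$ and $V_9(\rho_4)$, so no two curves can be interchanged.) In particular $\alpha$ restricts to an automorphism of each $E(\rho)\cong\bP^1$ — hence to the identity or an involution with exactly two fixed points — and it fixes every intersection point $P(\rho,\rho')$.

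For parts~(b) and~(c): for the three curves $E(\rho_2)$, $E(\rho_2')$, $E(\rho_4)$ I would parametrize the $\bP^1$ as in \Cref{gtw.comp} and check that $\alpha$ fixes three distinct points — the intersection points with adjacent curves, already $\alpha$-fixed by~(a), together with enough further explicit points of the parametrization — so that $\alpha$ is the identity there; this gives~(b). For $E(\rho_1')$, $E(\rho_2'')$, $E(\rho_3)$, $E(\rho_3')$ the shape of the $E_7$ diagram does most of the work. On the interior curves $E(\rho_3)$ (neighbors $E(\rho_2)$, $E(\rho_4)$) and $E(\rho_3')$ (neighbors $E(\rho_4)$, $E(\rho_2')$) all neighbors are pointwise fixed by~(b), so both intersection points are $\alpha$-fixed; it then remains only to verify $\alpha$ is not the identity on these curves (exhibit one moved point), whence these two intersection points are precisely its two fixed points and no extra one occurs. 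On the endpoint curves $E(\rho_1')$ (meeting only $E(\rho_2')$) and $E(\rho_2'')$ (meeting only the center $E(\rho_4)$), \Cref{int.G13} has already exhibited a second $A$-fixed point of $E$ on each, namely $V_6(\rho_1')=(T)$ and $V_8(\rho_2'')=(W)$, lying where the curves extending outside $E$ meet it; so it suffices to check that $\alpha$ is a nontrivial involution on $E(\rho_1')$ and on $E(\rho_2'')$ (e.g.\ that it acts by different scalars on the two ends of the parametrizing $\bP^1$) and that these points are not intersection points, which then forces the two fixed points to be precisely the intersection point and the isolated point.

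The main obstacle is bookkeeping rather than conceptual content: making the Ito--Nakamura description for the binary octahedral group explicit enough — the modules $V_i(\rho)$, the auxiliary submodules $\{S_1\cdot V_j(\rho')\}[\rho]$, and a chosen parametrization of each $E(\rho)\cong\bP^1$ — to pin down which projective point is which intersection point, and so to confirm that on $E(\rho_3)$, $E(\rho_3')$ both fixed points are intersection points while on $E(\rho_1')$, $E(\rho_2'')$ a single \emph{extra} fixed point, isolated within $E$, occurs. The required invariance and fixed-point verifications reduce to linear algebra over $\C$ and are carried out in \texttt{FixedLocus.m2}; the care lies in organizing the $E_7$ data so these checks suffice.
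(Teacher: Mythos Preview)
Your proposal is correct and follows the same overall strategy as the paper, with a few variations worth noting. The paper uses the diagonal representative $\alpha=\begin{pmatrix}\varepsilon&0\\0&\varepsilon^3\end{pmatrix}$ inherited from $G_{12}$ rather than your $r$; either works, but the paper's choice makes the $G_{12}$ and $G_{13}$ computations directly comparable. Your argument for (a) via $\operatorname{Aut}(E_7)=1$ is cleaner than the paper's approach, which simply checks for each non-central curve that $\alpha$ preserves $V_{9-d(\rho)}(\rho)\oplus V_{9+d(\rho)}(\rho)$; both then deduce that $E(\rho_4)$ is pointwise fixed from its three fixed intersection points. For (b) and (c) the paper treats all six non-central curves uniformly: verify $\alpha$ preserves the ambient module, then test a single explicit third $H$-submodule to decide whether the induced involution is trivial. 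Your organization, leaning on the diagram structure to cut down which checks are needed, is equally valid.

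One caution: you cite \S\ref{int.G13} as having ``already exhibited'' the isolated fixed points $V_6(\rho_1')$ and $V_8(\rho_2'')$, but in the paper that section in turn references \Cref{G13_A_summary}(c). The logical content you need from \S\ref{int.G13}---that these points are limits of the $A$-fixed curves in $N$, hence $A$-fixed by closedness of the fixed locus---is genuinely independent of the proposition, so your argument is not circular; but it would be cleaner (and closer to what the paper actually does) to verify directly that $\alpha$ scales the generators of $V_6(\rho_1')$ and $V_8(\rho_2'')$, which is immediate with either choice of $\alpha$.
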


Since $G_{12}$ is contained in $G_{13}$, we can compute the action of $A$ on the exceptional locus using the same choice of $\alpha\in G\setminus H$ with
\[\alpha=\begin{pmatrix}\varepsilon&0\\0&\varepsilon^3\end{pmatrix}.\]
The exceptional locus of $Y$ consists of seven exceptional curves: $E(\rho_1')$, $E(\rho_2)$, $E(\rho_2')$, $E(\rho_2'')$, $E(\rho_3)$, $E(\rho_3')$, and $E(\rho_4)$. We will show in our computations that
$E(\rho_1')$, $E(\rho_2)$, $E(\rho_2')$, $E(\rho_2'')$, $E(\rho_3)$, and $E(\rho_3')$
are fixed by $\alpha$, though not necessarily pointwise -- $\alpha$ restricts to an involution on each of them.
This result implies that the central component, $E(\rho_4)$, must be pointwise fixed, as its three points of intersection with $E(\rho_3), E(\rho_3')$ and $E(\rho_2'')$ must be fixed.

We will need several of the same invariants from \Cref{G12_A_summary} so we include the relevant ones here for convenience:
\begin{align*}
\varphi&=(x^2+y^2)^2+4\omega x^2y^2,&\psi&=(x^2+y^2)^2+4\omega^2x^2y^2, &W&=\varphi\psi,\\
T&=x^5y-xy^5, &\chi&=x^{12}-33x^8y^4-33x^4y^8+y^{12}&F&=\chi T
\end{align*}
where $\omega=e^{2\pi i/3}$ is a primitive cube root of unity.

\subsubsection{$E(\rho_1')$} $\rho_1'$ is an endpoint of the Dynkin diagram \Cref{E7.dynkin} a distance of $3$ from the center. By \Cref{IN.main}(1), points on $E(\rho_1')$ away from its intersection with $E(\rho_2')$ are given by nontrivial, proper $H$-invariant submodules of $V_6(\rho_1')\oplus V_{12}(\rho_1')$ other than $V_{12}(\rho_1')$. By \Cref{IN.main}(4), its intersection with $E(\rho_2')$ corresponds to $V_{12}(\rho_1')\oplus V_7(\rho_2')$. By \cite[p.235]{Ito-Nakamura:1999}, we have
\[ V_6(\rho_1')=(T)\quad V_{12}(\rho_1')=(\chi)\]
We check in \texttt{FixedLocus.m2} that $\alpha$ fixes $V_6(\rho_1')\oplus V_{12}(\rho_1')$ and thus is an involution on $E(\rho_1')$.
Furthermore, $\alpha$ fixes $V_6(\rho_1')$ and $ V_{12}(\rho_1')\oplus V_7(\rho_2')$, so
to confirm whether $\alpha$ fixes $E(\rho_1')$ pointwise we simply need to check whether it fixes a third point on $E(\rho_1')$. In \texttt{FixedLocus.m2}, we confirm that $(T+\chi)$ is indeed an $H$-submodule of
$V_6(\rho_1')\oplus V_{12}(\rho_1')$
and check  that
\[
\alpha((T+\chi))=(T-\chi)\neq(T+\chi).
\]
Thus, $A$ does not pointwise fix $E(\rho_1')$.

\subsubsection{$E(\rho_2)$} $\rho_2$ is an endpoint of the Dynkin diagram and is a distance of $2$ from the center. By \Cref{IN.main}(1), points on $E(\rho_2)$ away from its intersection with $E(\rho_3)$ are given by nontrivial, proper $H$-invariant submodules of $V_7(\rho_2)\oplus V_{11}(\rho_2)$ other than $V_{11}(\rho_2)$. By \Cref{IN.main}(4), its intersection with $E(\rho_3)$ corresponds to $V_{11}(\rho_2)\oplus V_8(\rho_3)$. By \cite[p.235]{Ito-Nakamura:1999}, we have
\begin{align*}
V_7(\rho_2)&=(7x^4y^3+y^7,-x^7-7x^3y^4)\\
 V_{11}(\rho_2)&=(x^{10}y-6x^6y^5+5x^2y^9,-xy^{10}+6x^5y^6-5x^9y^2)
\end{align*}
For readability, we will set
\begin{align*}
f_1&=7x^4y^3+y^7& f_2&=-x^7-7x^3y^4\\
g_1&=x^{10}y-6x^6y^5+5x^2y^9&g_2&=-xy^{10}+6x^5y^6-5x^9y^2
\end{align*}
We check in \texttt{FixedLocus.m2} that $\alpha$ fixes $V_7(\rho_2)\oplus V_{11}(\rho_2)$ and thus is an involution on $E(\rho_2)$.
To confirm whether $\alpha$ fixes $E(\rho_2)$ pointwise we simply need to check whether it fixes a third point on $E(\rho_2)$.
In \texttt{FixedLocus.m2},
we confirm that $(f_1+g_1,f_2+g_2)$ is indeed an $H$-submodule of $V_7(\rho_2)\oplus V_{11}(\rho_2)$ and check  that
\[
\alpha((f_1+g_1,f_2+g_2))=(f_1+g_1,f_2+g_2).
\]
Thus, $A$ pointwise fixes $E(\rho_2)$.

\subsubsection{$E(\rho_2')$} $\rho_2'$ is not an endpoint of the Dynkin diagram and is a distance of $2$ from the center. By \Cref{IN.main}(2), points on $E(\rho_2')$ away from its intersection with $E(\rho_1')$ and $E(\rho_3')$ are given by nontrivial, proper $H$-invariant submodules of $V_7(\rho_2')\oplus V_{11}(\rho_2')$ other than $V_7(\rho_2')$ and $V_{11}(\rho_2')$. By \Cref{IN.main}(4), its intersection with $E(\rho_1')$ corresponds to $V_{12}(\rho_1')\oplus V_7(\rho_2')$ and its intersection with $E(\rho_3')$ corresponds to $V_{11}(\rho_2')\oplus V_8(\rho_3')$. By \cite[p.235]{Ito-Nakamura:1999}, we have
\[
V_7(\rho_2')=(xT,yT)\quad V_{11}(\rho_2')=(-11x^8y^3-22x^4y^7+y^{11},11x^3y^8+22x^7y^4-x^{11})
\]
For readability, let 
\[v_1=-11x^8y^3-22x^4y^7+y^{11}\quad v_2=11x^3y^8+22x^7y^4-x^{11}\]
We check in \texttt{FixedLocus.m2} that $\alpha$ fixes $V_7(\rho_2')\oplus V_{11}(\rho_2')$ and thus is an involution on $E(\rho_2')$.
To confirm whether $\alpha$ fixes $E(\rho_2')$ pointwise we simply need to check whether it fixes a third point on $E(\rho_2')$.
In \texttt{FixedLocus.m2},
we confirm that $(xT+v_1,yT+v_2)$ is indeed an $H$-submodule of $V_7(\rho_2')\oplus V_{11}(\rho_2')$
and check that
\[
\alpha((xT+v_1,yT+v_2))=(xT+v_1,yT+v_2).
\]
Thus, $A$ pointwise fixes $E(\rho_2')$.

\subsubsection{$E(\rho_2'')$} $\rho_2''$ is an endpoint of the Dynkin diagram \Cref{E7.dynkin} and is a distance of $1$ from the center. By \Cref{IN.main}(1), points on $E(\rho_2'')$ away from its intersection with $E(\rho_4)$ are given by nontrivial, proper $H$-invariant submodules of $V_8(\rho_2'')\oplus V_{10}(\rho_2'')$ other than $V_8(\rho_2'')$. By \Cref{IN.main}(5), its intersection with $E(\rho_4)$ corresponds to $V_{10}(\rho_2'')\oplus [S_1\cdot V_8(\rho_2'')]$. We note that there was a small typo in the generators of $V_{12}(\rho_2)$ in \cite[p.235]{Ito-Nakamura:1999} but with that typo corrected we get 
\[
V_8(\rho_2'')=(\psi^2,-\varphi^2),\quad
V_{10}(\rho_2'')=(\psi T,-\varphi T)
\]
We check in \texttt{FixedLocus.m2} that $\alpha$ fixes $V_8(\rho_2'')\oplus V_{10}(\rho_2'')$ and thus is an involution on $E(\rho_2'')$.
To confirm whether $\alpha$ fixes $E(\rho_2'')$ pointwise we simply need to check whether it fixes a third point on $E(\rho_2'')$.
In \texttt{FixedLocus.m2},
we confirm that $(\psi^2+\phi T,\phi^2+\psi T)$ is indeed an $H$-submodule
of $V_8(\rho_2'')\oplus V_{10}(\rho_2'')$
and check  that
\[
\alpha((\psi^2+\phi T,\phi^2+\psi T))=(\psi^2-\phi T,\phi^2-\psi T)\neq (\psi^2+\phi T,\phi^2+\psi T).
\]
Thus, $A$ does not pointwise fix $E(\rho_2'')$.

\subsubsection{$E(\rho_3)$} $\rho_3$ is not an endpoint of the Dynkin diagram \Cref{E7.dynkin} and is a distance of $1$ from the center. By \Cref{IN.main}(2), points on $E(\rho_3)$ away from its intersection with $E(\rho_2)$ and $E(\rho_4)$ are given by nontrivial, proper $H$-invariant submodules of $V_8(\rho_3)\oplus V_{10}(\rho_3)$ other than $V_8(\rho_3)$ and $V_{10}(\rho_3)$. By \Cref{IN.main}(4) and (5), its intersection with $E(\rho_2)$ corresponds to $V_{11}(\rho_2)\oplus V_8(\rho_3)$ and its intersection with $E(\rho_4)$ corresponds to $V_{10}(\rho_3)\oplus [S_1\cdot V_8(\rho_3)]$. By \cite[p.235]{Ito-Nakamura:1999}, we have
\begin{align*}
V_8(\rho_3)&=(-2xy^7-14x^5y^3,x^8-y^8,2x^7y+14x^3y^5)\\
V_{10}(\rho_3)&=(40x^{10}+60x^6y^4,5x^9y+54x^5y^5+5xy^9,60x^4y^6+4y^{10})
\end{align*}
For readability, we write our generators as
\begin{align*}
h_1&=-2xy^7-14x^5y^3&h_2&=x^8-y^8&h_3&=2x^7y+14x^3y^5\\
j_1&=40x^{10}+60x^6y^4&j_2&=5x^9y+54x^5y^5+5xy^9&j_3&=60x^4y^6+4y^{10}
\end{align*}
We check in \texttt{FixedLocus.m2} that $\alpha$ fixes $V_8(\rho_3)\oplus V_{10}(\rho_3)$ and thus is an involution on $E(\rho_3)$.
To confirm whether $\alpha$ fixes $E(\rho_3)$ pointwise we simply need to check whether it fixes a third point on $E(\rho_3)$.
In \texttt{FixedLocus.m2},
we confirm that $(h_1+j_1,h_2+j_2,h_3+j_3)$ is indeed an $H$-submodule
of $V_8(\rho_3)\oplus V_{10}(\rho_3)$
and check  that
\[
\alpha((h_1+j_1,h_2+j_2,h_3+j_3))=(h_1-j_1,h_2-j_2,h_3-j_3))\neq (h_1+j_1,h_2+j_2,h_3+j_3)).
\]
Thus, $A$ does not pointwise fix $E(\rho_3)$.

\subsubsection{$E(\rho_3')$} $\rho_3'$ is not an endpoint of the Dynkin diagram \Cref{E7.dynkin} and is a distance of $1$ from the center. By \Cref{IN.main}(2), points on $E(\rho_3')$ away from its intersection with $E(\rho_2')$ and $E(\rho_4)$ are given by nontrivial, proper $H$-invariant submodules of $V_8(\rho_3')\oplus V_{10}(\rho_3')$ other than $V_8(\rho_3')$ and $V_{10}(\rho_3')$. By \Cref{IN.main}(4) and (5), its intersection with $E(\rho_2')$ corresponds to $V_{11}(\rho_2')\oplus V_8(\rho_3')$ and its intersection with $E(\rho_4)$ corresponds to $V_{10}(\rho_3')\oplus [S_1\cdot V_8(\rho_3')]$. By \cite[p.235]{Ito-Nakamura:1999}, we have
\begin{align*}
V_8(\rho_3')&=(x^2T,xyT,y^2T)\\
V_{10}(\rho_3')&=(-3x^8y^2-14x^4y^6+y^{10},8x^7y^3+8x^3y^7,x^{10}-14x^6y^4-3x^2y^8)
\end{align*}
For readability we set
\[u_1=-3x^8y^2-14x^4y^6+y^{10}\quad u_2=8x^7y^3+8x^3y^7\quad u_3=x^{10}-14x^6y^4-3x^2y^8\]
We check in \texttt{FixedLocus.m2} that $\alpha$ fixes $V_8(\rho_3')\oplus V_{10}(\rho_3')$ and thus is an involution on $E(\rho_3')$.
To confirm whether $\alpha$ fixes $E(\rho_3')$ pointwise we simply need to check whether it fixes a third point on $E(\rho_3')$.
In \texttt{FixedLocus.m2},
we confirm that $(x^2T+u_1,xyT+u_2,y^2T+u_3)$ is indeed an $H$-submodule
of $V_8(\rho_3')\oplus V_{10}(\rho_3')$
and check  that
\[
\alpha((x^2T+u_1,xyT+u_2,y^2T+u_3))=(x^2T-u_1,xyT-u_2,y^2T-u_3)\neq (x^2T+u_1,xyT+u_2,y^2T+u_3).
\]
Thus, $A$ does not pointwise fix $E(\rho_3')$.

\subsection{$G_{22}$}\label{app.gtwtw}

In this section, we set $G:=G_{22}$. 
Then $H$ is the binary icosahedral group and the singularity $\bC^2/H$ is of type $E_8$. A presentation of $H$ with the following generators is given in \cite[Ch.~15]{Ito-Nakamura:1999}, where
$\varepsilon\colon=e^{2\pi i/5}$ is a primitive fifth root of unity:
\[
  \sigma=-\begin{pmatrix} \varepsilon^3 & 0 \\ 0 & \varepsilon^2 \end{pmatrix},\quad
  \tau=\frac{1}{\sqrt{5}}\begin{pmatrix} -(\varepsilon-\varepsilon^4) & \varepsilon^2-\varepsilon^3 \\ \varepsilon^2-\varepsilon^3 & \varepsilon-\varepsilon^4 \end{pmatrix}.
\]
A presentation of $G$ is given in \cite[p.~88]{LehrerTaylor}, but the embedding of $G\cap\SL(2,\bC)$ is distinct from that given in \cite{Ito-Nakamura:1999}. To extend the embedding of $H$ in \cite{Ito-Nakamura:1999} to an embedding of $G$, we add the 
following choice of $\alpha$ as a generator:
\[\alpha=\frac{i}{\sqrt{5}}\begin{pmatrix}-\varepsilon+\varepsilon^4&\varepsilon^2-\varepsilon^3\\\varepsilon^2-\varepsilon^3&\varepsilon-\varepsilon^4\end{pmatrix}.\]
It is shown in \cite[p.~90]{LehrerTaylor} that the center of $G_{22}$ is the cyclic group of order $4$ and in \cite{group-names} that in fact $G_{22}$ is the unique extension of $H\simeq\SL_2(\mathbb{F}_5)$ by $\Z/2$ whose center contains an element of order $4$. 
Thus to show that $\sigma$, $\tau$ and $\alpha$ in fact generate $G_{22}$, it suffices to observe that $\alpha\tau^{-1}=iI$ (this is verified in \texttt{FixedLocus.m2}).

In this section, we compute the following information about the action of $A$, which we summarize in a proposition:

\begin{prop}\label{G22_A_summary}
Let $G:=G_{22}$. The action of $A$ on the exceptional locus of $Y$:
\begin{enumerate}[(a)]
\item maps every exceptional curve to itself;
\item pointwise fixes $E(\rho_2)$, $E(\rho_2')$, $E(\rho_4)$, and $E(\rho_6)$;
\item fixes exactly two points on $E(\rho_3)$, $E(\rho_3'')$, $E(\rho_4')$, and $E(\rho_5)$, leaving a single isolated (within the exceptional locus) fixed point on $E(\rho_3'')$ given by $V_{14}(\rho_3'')$.
\end{enumerate}
\end{prop}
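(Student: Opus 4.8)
The plan is to run, for $G=G_{22}$, exactly the argument already used for $G_{12}$ and $G_{13}$: realize each exceptional curve of $Y$ as a $\bP^1$ via the Ito--Nakamura description (\Cref{IN.main}), compute the action on it of the fixed representative $\alpha\in G\setminus H$ chosen above, and read off the fixed locus. Recall that $\alpha^2\in H$ and that $H$ acts trivially on $Y$ (an $H$-cluster $I$ satisfies $h(I)=I$ for all $h\in H$); hence $\alpha$ induces an \emph{involution} of $Y$, and its restriction to each $E(\rho)\cong\bP^1$ is either the identity or has exactly two fixed points. The $E_8$ diagram of $\C^2/H$ is star-shaped with center $\rho_6$ and three arms: $\rho_3''$ (length $1$), $\rho_4'$--$\rho_2'$ (length $2$), and $\rho_5$--$\rho_4$--$\rho_3$--$\rho_2$ (length $4$); the Coxeter number is $h=30$, so by \cite[Theorem~10.6]{Ito-Nakamura:1999} we have $V_{15-d(\rho)}(\rho)\cong V_{15+d(\rho)}(\rho)\cong\rho$ away from the center and $V_{15}(\rho_6)\cong\rho_6^{\oplus2}$.

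For part (a), I would treat each of the eight curves in turn, writing down from \Cref{IN.main} and Ito--Nakamura's explicit generators of the modules $V_i(\rho)$ in type $E_8$ the $H$-module(s) whose proper submodules parametrize the non-intersection points of $E(\rho)$, together with the modules describing its intersection points with adjacent curves. Applying $\alpha$ to these generators --- directly when $\alpha$ merely rescales or permutes them, and via \texttt{FixedLocus.m2} \cite{code} when it does not --- one checks in every case that $\alpha$ preserves the defining module(s). Since no two of the eight modules are interchanged, $\alpha$ maps each $E(\rho)$ to itself and restricts to an involution of it, proving (a) and setting up the parametrizations needed below.

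With those parametrizations in hand, (b) and (c) are decided by testing one further point on each $\bP^1$. For the central curve $E(\rho_6)$, the three intersection points with $E(\rho_3'')$, $E(\rho_4')$, $E(\rho_5)$ are each fixed (the adjacent curves being preserved), so $\alpha$ fixes three points of a $\bP^1$ and is the identity there; for $E(\rho_2)$, $E(\rho_2')$, $E(\rho_4)$ I would exhibit one explicit non-intersection point fixed by $\alpha$ (verified in \texttt{FixedLocus.m2}), again forcing the identity --- this gives (b). For $E(\rho_3)$, $E(\rho_3'')$, $E(\rho_4')$, $E(\rho_5)$ I would instead exhibit an explicit point moved by $\alpha$, so that the involution is nontrivial and has exactly two fixed points. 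For $\rho_3$, $\rho_4'$, $\rho_5$ --- each a non-endpoint node with exactly two neighbours --- these two fixed points must be the two (automatically fixed) intersection points with the adjacent curves, so no fixed point is isolated in $E$. For $\rho_3''$, an endpoint adjacent only to $\rho_6$, one fixed point is the intersection with $E(\rho_6)$; the other lies on no other exceptional component, and I would pin it down by checking that $\alpha$ fixes the graded summand $V_{14}(\rho_3'')$ (the $a\neq0,\,b=0$ point in the parametrization of $E(\rho_3'')$ by submodules of $V_{14}(\rho_3'')\oplus V_{16}(\rho_3'')$) and that this point is distinct from $P(\rho_3'',\rho_6)$. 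This yields (c) and matches the module recorded in \Cref{int.G22}.

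The main obstacle is computational rather than conceptual. The relevant invariants of the binary icosahedral group have degrees up to $30$; the central curve $E(\rho_6)\cong\rho_6^{\oplus2}$ must be parametrized by first extracting the correct pair of six-dimensional $H$-summands, exactly as was done for $E(\rho_3)$ in the $G_{12}$ case; and one must match the projective coordinate on each $\bP^1$ against the intersection points predicted by \Cref{IN.main} before the ``third point'' tests can be trusted. Each verification is routine but error-prone, which is why all of them are carried out in \texttt{FixedLocus.m2}.
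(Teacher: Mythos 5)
Your proposal is correct and follows essentially the same route as the paper: the same Ito--Nakamura parametrizations and choice of $\alpha$, the same module-preservation checks (deferred to \texttt{FixedLocus.m2}) for part (a), the same ``test one further point'' criterion to distinguish pointwise fixing from a two-point involution, the same three-fixed-points argument for the central curve $E(\rho_6)$, and the same identification of the isolated fixed point as $V_{14}(\rho_3'')$. The only caution is that for the endpoint curves $E(\rho_2)$, $E(\rho_2')$ a single fixed non-intersection point forces the identity only because the graded summand $V_{15-d(\rho)}(\rho)$ is itself an automatically fixed point of the curve, so the extra point tested must be (and in the paper is) distinct from it --- a detail your write-up uses implicitly, exactly as the paper does.
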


It will also be convenient to use the following invariants from \cite[p.239]{Ito-Nakamura:1999}:
\begin{align*}
\sigma_1& = x^{10}+66x^5y^5-11y^{10} &\sigma_2& = -11x^{10}-66x^5y^5+y^{10}\\
\tau_1 &= x^{10}-39x^5y^5-26y^{10} & \tau_2 &= -26x^{10}+39x^5y^5+y^{10}
\end{align*}

The exceptional locus of $\HHilb(\bC^2)$ consists of eight curves: $E(\rho_2)$, $E(\rho_2')$, $E(\rho_3)$, $E(\rho_3'')$,$E(\rho_4)$, $E(\rho_4')$, $E(\rho_5)$, and $E(\rho_6)$. Their intersections are shown in \Cref{E8.dynkin}.
We will show computationally that
$E(\rho_2)$, $E(\rho_2')$, $E(\rho_3)$, $E(\rho_3'')$,$E(\rho_4)$, $E(\rho_4')$, and $E(\rho_5)$
are fixed by $\alpha$, though not necessarily pointwise -- $\alpha$ restricts to an involution on each of them.
These results imply that the central component, $E(\rho_6)$, must be pointwise fixed, as its three points of intersection with $E(\rho_3''), E(\rho_4')$ and $E(\rho_5)$ must be fixed.
Furthermore, since every point of intersection is fixed and $\alpha$ acts on each exceptional component $E(\rho)$ as in involution, to check whether $\alpha$ pointwise fixes $E(\rho)$ or not, we simply need to find a non-intersection point on $E(\rho)$ and check whether $\alpha$ fixes that point.

\subsubsection{$E(\rho_2)$} $\rho_2$ is an endpoint of the Dynkin diagram a distance of $4$ from the center. By \Cref{IN.main}(1), points on $E(\rho_2)$ away from its intersection with $E(\rho_3)$ are given by nontrivial, proper $H$-invariant submodules of $V_{11}(\rho_2)\oplus V_{19}(\rho_2)$ other than $V_{19}(\rho_2)$. By \Cref{IN.main}(4), its intersection with $E(\rho_3)$ corresponds to $V_{19}(\rho_2)\oplus V_{12}(\rho_3)$. By \cite[p.241]{Ito-Nakamura:1999}, we have
\begin{align*}
V_{11}(\rho_2)&=(x\sigma_1,-y\sigma_2)\\
V_{19}(\rho_2)&=(-57x^{15}y^4+247x^{10}y^9+171x^5y^{14}+y^{19},-x^{19}+171x^{14}y^5-247x^9y^{10}-57x^4y^{15})
\end{align*}
For readability let
\begin{align*}
c_1&=x\sigma_1& c_2&=-y\sigma_2\\
d_1&=-57x^{15}y^4+247x^{10}y^9+171x^5y^{14}+y^{19}&d_2&=-x^{19}+171x^{14}y^5-247x^9y^{10}-57x^4y^{15}
\end{align*}
We check in \texttt{FixedLocus.m2} that $\alpha$ fixes
$V_{11}(\rho_2)\oplus V_{19}(\rho_2)$
and thus is an involution on $E(\rho_2)$. To confirm whether $\alpha$ fixes $E(\rho_2)$ pointwise we simply need to check whether it fixes a third point on $E(\rho_2)$.
In \texttt{FixedLocus.m2}, we check that $(c_1+d_1,c_2-d_2)$ is an $H$-submodule
of $V_{11}(\rho_2)\oplus V_{19}(\rho_2)$
and we confirm that
\[
\alpha((c_1+d_1,c_2-d_2))=(c_1+d_1,c_2-d_2).
\]
Thus $A$ fixes $E(\rho_2)$ pointwise.

\subsubsection{$E(\rho_2')$} $\rho_2'$ is an endpoint of the Dynkin diagram a distance of $2$ from the center. By \Cref{IN.main}(1), points on $E(\rho_2)$ away from its intersection with $E(\rho_4')$ are given by nontrivial, proper $H$-invariant submodules of $V_{13}(\rho_2')\oplus V_{17}(\rho_2')$ other than $V_{17}(\rho_2')$. By \Cref{IN.main}(4), its intersection with $E(\rho_4')$ corresponds to $V_{17}(\rho_2')\oplus V_{14}(\rho_4')$. By \cite[p.241]{Ito-Nakamura:1999}, we have
\begin{align*}
V_{13}(\rho_2')&=(y^3\tau_2,-x^3\tau_1)\\
V_{17}(\rho_2')&=(x^{17}+119x^{12}y^5+187x^7y^{10}+17x^2y^{15},-17x^{15}y^2+187x^{10}y^7-119x^5y^{12}+y^{17})
\end{align*}
For readability let
\begin{align*}
u_1&=y^3\tau_2& u_2&=-c^3\tau_1\\
v_1&=x^{17}+119x^{12}y^5+187x^7y^{10}+17x^2y^{15}&v_2&=-17x^{15}y^2+187x^{10}y^7-119x^5y^{12}+y^{17}
\end{align*}
We check in \texttt{FixedLocus.m2} that $\alpha$ fixes
$V_{13}(\rho_2')\oplus V_{17}(\rho_2')$
and thus is an involution on $E(\rho_2')$. To confirm whether $\alpha$ fixes $E(\rho_2')$ pointwise we simply need to check whether it fixes a third point on $E(\rho_2')$.
In \texttt{FixedLocus.m2}, we check that $(u_1+v_1,u_2+v_2)$ is an $H$-module of $V_{13}(\rho_2')\oplus V_{17}(\rho_2')$
and we confirm that
\[
\alpha((u_1+v_1,u_2+v_2))=(u_1+v_1,u_2+v_2).
\]
Thus $A$ fixes $E(\rho_2')$ pointwise.

\subsubsection{$E(\rho_3)$} $\rho_3$ is not an endpoint of the Dynkin diagram and is a distance of $3$ from the center. By \Cref{IN.main}(2), points on $E(\rho_3)$ away from its intersections with $E(\rho_2)$ and $E(\rho_4)$ are given by nontrivial, proper $H$-invariant submodules of $V_{12}(\rho_3)\oplus V_{18}(\rho_3)$ other than $V_{12}(\rho_3)$ and $V_{18}(\rho_3)$. By \Cref{IN.main}(4), its intersection with $E(\rho_2)$ corresponds to $V_{19}(\rho_2)\oplus V_{12}(\rho_3)$ and its intersection with $E(\rho_4)$ corresponds to $V_{18}(\rho_3)\oplus V_{13}(\rho_4)$. By \cite[p.241]{Ito-Nakamura:1999}, we have
\begin{align*}
V_{12}(\rho_3)=&(x^2\sigma_1,\frac{xy}{2}(\sigma_1+\sigma_2), y^2\sigma_2)\\
V_{18}(\rho_3)=&(-12x^{15}y^3+117x^{10}y^8+126x^5y^{13}+y^{18},45x^{14}y^4-130x^9y^9-45x^4y^{14},\\
&x^{18}-126x^{13}y^5+117x^8y^{10}+12x^3y^{15})
\end{align*}
For readability let
\begin{align*}
f_1&=x^2\sigma_1&g_1&=-12x^{15}y^3+117x^{10}y^8+126x^5y^{13}+y^{18}\\
f_2&=\frac{xy}{2}(\sigma_1+\sigma_2)& g_2 &=45x^{14}y^4-130x^9y^9-45x^4y^{14}\\
f_3&= y^2\sigma_2& g_3&=x^{18}-126x^{13}y^5+117x^8y^{10}+12x^3y^{15}
\end{align*}
We check in \texttt{FixedLocus.m2} that $\alpha$ fixes
$V_{12}(\rho_3)\oplus V_{18}(\rho_3)$
and thus is an involution on $E(\rho_3)$.
To confirm whether $\alpha$ fixes $E(\rho_3)$ pointwise we simply need to check whether it fixes a third point on $E(\rho_3)$.
In \texttt{FixedLocus.m2}, we check that $(f_1+g_1,f_2+g_2,f_3+g_3)$ is an $H$-module of $V_{12}(\rho_3)\oplus V_{18}(\rho_3)$
 and we confirm that
\[
\alpha((f_1+g_1,f_2+g_2,f_3+g_3))=(f_1-g_1,f_2-g_2,f_3-g_3)\neq(f_1+g_1,f_2+g_2,f_3+g_3).
\]
Thus, $A$ does not fix $E(\rho_3)$ pointwise.

\subsubsection{$E(\rho_3'')$} $\rho_3''$ is an endpoint of the Dynkin diagram a distance of $1$ from the center. By \Cref{IN.main}(1), points on $E(\rho_3'')$ away from its intersection with $E(\rho_6)$ are given by nontrivial, proper $H$-invariant submodules of $V_{14}(\rho_3'')\oplus V_{16}(\rho_3'')$ other than $V_{16}(\rho_3'')$. By \Cref{IN.main}(5), its intersection with $E(\rho_6)$ corresponds to $V_{16}(\rho_3'')\oplus [S_1\cdot V_{16}(\rho_3'')]$. By \cite[p.241]{Ito-Nakamura:1999}, we have
\begin{align*}
V_{14}(\rho_3'')=&(x^{14}-14x^9y^5+49x^4y^{10},7x^{12}y^2-48x^7y^7-7x^2y^{12}, 49x^{10}y^4+14x^5y^9+y^{14})\\
V_{16}(\rho_3'')=&(3x^{15}y-143x^{10}y^6-39x^5y^{11}+y^{16},-25x^{13}y^3-25x^3y^{13},x^{16}+39x^{11}y^5-143x^6y^{10}-3xy^{15})
\end{align*}
For readability let
\begin{align*}
m_1&=x^{14}-14x^9y^5+49x^4y^{10}&n_1&=3x^{15}y-143x^{10}y^6-39x^5y^{11}+y^{16}\\
m_2&=7x^{12}y^2-48x^7y^7-7x^2y^{12}&n_2&=-25x^{13}y^3-25x^3y^{13}\\
m_3&=49x^{10}y^4+14x^5y^9+y^{14}&n_3&=x^{16}+39x^{11}y^5-143x^6y^{10}-3xy^{15}
\end{align*}

We check in \texttt{FixedLocus.m2} that $\alpha$ fixes
$V_{14}(\rho_3'')\oplus V_{16}(\rho_3'')$
and thus is an involution on $E(\rho_3'')$. We also check that $\alpha$ fixes
$V_{14}(\rho_3'')$, which is thus a fixed point on $E(\rho_3'')$.
To confirm whether $\alpha$ fixes $E(\rho_3'')$ pointwise we simply need to check whether it fixes a third point on $E(\rho_3'')$.
In \texttt{FixedLocus.m2}, we check that $(m_1+n_1,m_2+n_2,m_3+n_3)$ is an $H$-module
of $V_{14}(\rho_3'')\oplus V_{16}(\rho_3'')$
and we confirm that
\[
\alpha((m_1+n_2,m_2+n_2,m_3+n_3))=(m_1-n_1,m_2-n_2,m_3-n_3)\neq(m_1+n_1,m_2+n_2,m_3+n_3).
\]
Thus, $A$ does not fix $E(\rho_3'')$ pointwise.

\subsubsection{$E(\rho_4)$} $\rho_4$ is not an endpoint of the Dynkin diagram and is a distance of $2$ from the center. By \Cref{IN.main}(2), points on $E(\rho_4)$ away from its intersections with $E(\rho_3)$ and $E(\rho_5)$ are given by nontrivial, proper $H$-invariant submodules of $V_{13}(\rho_4)\oplus V_{17}(\rho_4)$ other than $V_{13}(\rho_4)$ and $V_{17}(\rho_4)$. By \Cref{IN.main}(4), its intersection with $E(\rho_3)$ corresponds to $V_{18}(\rho_3)\oplus V_{13}(\rho_4)$ and its intersection with $E(\rho_5)$ corresponds to $V_{17}(\rho_4)\oplus V_{14}(\rho_5)$. By \cite[p.241]{Ito-Nakamura:1999}, we have
\begin{align*}
V_{13}(\rho_4)=&(x^3\sigma_1,-3x^{12}y+22x^7y^6-7x^2y^{11},-7x^{11}y^2-22x^6y^7-3xy^{12}, y^3\sigma_2)\\
V_{17}(\rho_4)=&(-2x^{15}y^2+52x^{10}y^7+91x^5y^{12}+y^{17},10x^{14}y^3-65x^9y^8-35x^4y^{13},\\
&-35x^{13}y^4+65x^8y^9+10x^3y^{14},-x^{17}+91x^{12}y^5-52x^7y^{10}-2x^2y^{15})
\end{align*}
For readability let
\begin{align*}
h_1&=x^3\sigma_1&j_1&=-2x^{15}y^2+52x^{10}y^7+91x^5y^{12}+y^{17}\\
h_2&=-3x^{12}y+22x^7y^6-7x^2y^{11}&j_2&=10x^{14}y^3-65x^9y^8-35x^4y^{13}\\
h_3&=-7x^{11}y^2-22x^6y^7-3xy^{12}&j_3&=-35x^{13}y^4+65x^8y^9+10x^3y^{14}\\
h_4&=y^3\sigma_2&j_4&=-x^{17}+91x^{12}y^5-52x^7y^{10}-2x^2y^{15}
\end{align*}

We check in \texttt{FixedLocus.m2} that $\alpha$ fixes
$V_{13}(\rho_4)\oplus V_{17}(\rho_4)$
and thus is an involution on $E(\rho_4)$.
To confirm whether $\alpha$ fixes $E(\rho_4)$ pointwise we simply need to check whether it fixes a third point on $E(\rho_4)$.
We check that $(h_1+j_1,h_2+j_2,h_3+j_3,h_4+j_4)$ is an $H$-module in \texttt{FixedLocus.m2} and we confirm that
\[
\alpha((h_1+j_1,h_2+j_2,h_3+j_3,h_4+j_4))=(h_1+j_1,h_2+j_2,h_3+j_3,h_4+j_4).
\]
Thus, $A$ fixes $E(\rho_4)$ pointwise.

\subsubsection{$E(\rho_4')$} $\rho_4'$ is not an endpoint of the Dynkin diagram and is a distance of $1$ from the center. By \Cref{IN.main}(2), points on $E(\rho_4')$ away from its intersections with $E(\rho_2')$ and $E(\rho_6)$ are given by nontrivial, proper $H$-invariant submodules of $V_{14}(\rho_4')\oplus V_{16}(\rho_4')$ other than $V_{14}(\rho_4')$ and $V_{16}(\rho_4')$. By \Cref{IN.main}(4) and (5), its intersection with $E(\rho_2')$ corresponds to $V_{17}(\rho_2')\oplus V_{14}(\rho_4')$ and its intersection with $E(\rho_6)$ corresponds to $V_{16}(\rho_4')\oplus [S_1\cdot V_{16}(\rho_4')]$. By \cite[p.241]{Ito-Nakamura:1999}, we have
\begin{align*}
V_{14}(\rho_4')=&(xy^3\tau_2,-x^4\tau_1,y^4\tau_2,-x^3y\tau_1)\\
V_{16}(\rho_4')=&(-2x^{15}y+77x^{10}y^6-84x^5y^{11}+y^{16}, 35x^{12}y^4+110x^7y^9+15x^2y^{14},\\
&15x^{14}y^2-110x^9y^7+35x^4y^{12},-x^{16}-84x^{11}y^5-77x^6y^{10}-2xy^{15})
\end{align*}
For readability let
\begin{align*}
a_1&=xy^3\tau_2&b_1&=-2x^{15}y+77x^{10}y^6-84x^5y^{11}+y^{16}\\
a_2&=-x^4\tau_1&b_2&=35x^{12}y^4+110x^7y^9+15x^2y^{14}\\
a_3&=y^4\tau_2&b_3&=15x^{14}y^2-110x^9y^7+35x^4y^{12}\\
a_4&=-x^3y\tau_1&b_4&=-x^{16}-84x^{11}y^5-77x^6y^{10}-2xy^{15}
\end{align*}

We check in \texttt{FixedLocus.m2} that $\alpha$ fixes
$V_{14}(\rho_4')\oplus V_{16}(\rho_4')$
and thus is an involution on $E(\rho_4')$. 
To confirm whether $\alpha$ fixes $E(\rho_4')$ pointwise we simply need to check whether it fixes a third point on $E(\rho_4')$.
In \texttt{FixedLocus.m2}, we check that $(a_1+b_2,a_2+b_1,a_3+b_4,a_4+b_3)$ is an $H$-module
of $V_{14}(\rho_4')\oplus V_{16}(\rho_4')$
and we confirm that
\[
\alpha((a_1+b_2,a_2+b_1,a_3+b_4,a_4+b_3))=(a_1-b_2,a_2-b_1,a_3-b_4,a_4-b_3)\neq(a_1+b_2,a_2+b_1,a_3+b_4,a_4+b_3).
\]
Thus $A$ does not fix $E(\rho_4')$ pointwise.

\subsubsection{$E(\rho_5)$} $\rho_5$ is not an endpoint of the Dynkin diagram and is a distance of $1$ from the center. By \Cref{IN.main}(2), points on $E(\rho_5)$ away from its intersections with $E(\rho_4)$ and $E(\rho_6)$ are given by nontrivial, proper $H$-invariant submodules of $V_{14}(\rho_5)\oplus V_{16}(\rho_5)$ other than $V_{14}(\rho_5)$ and $V_{16}(\rho_5)$. By \Cref{IN.main}(4) and (5), its intersection with $E(\rho_4)$ corresponds to $V_{17}(\rho_4)\oplus V_{14}(\rho_5)$ and its intersection with $E(\rho_6)$ corresponds to $V_{16}(\rho_5)\oplus [S_1\cdot V_{16}(\rho_5)]$. By \cite[p.241]{Ito-Nakamura:1999}, we have
\begin{align*}
V_{14}(\rho_5)=&(x^4\sigma_1,-2x^{13}y+33x^8y^6-8x^3y^{11},-5x^{12}y^2-5x^2y^{12},-8x^{11}y^3-33x^6y^8-2xy^{13},-y^4\sigma_2)\\
V_{16}(\rho_5)=&(64x^{15}y+728x^{10}y^6+y^{16},66x^{14}y^2+676x^9y^7-91x^4y^{12},56x^{13}y^3+741x^8y^8-56x^3y^{13},\\
&91x^{12}y^4+676x^7y^9-66x^2y^{14},x^{16}+728x^6y^{10}-64xy^{15})
\end{align*}
For readability let
\begin{align*}
k_1&=x^4\sigma_1&l_1&=64x^{15}y+728x^{10}y^6+y^{16}\\
k_2&=-2x^{13}y+33x^8y^6-8x^3y^{11}&l_2&=66x^{14}y^2+676x^9y^7-91x^4y^{12}\\
k_3&=-5x^{12}y^2-5x^2y^{12}&l_3&=56x^{13}y^3+741x^8y^8-56x^3y^{13}\\
k_4&=-8x^{11}y^3-33x^6y^8-2xy^{13}&l_4&=91x^{12}y^4+676x^7y^9-66x^2y^{14}\\
k_5&=-y^4\sigma_2&l_5&=x^{16}+728x^6y^{10}-64xy^{15}
\end{align*}

We check in \texttt{FixedLocus.m2} that $\alpha$ fixes
$V_{14}(\rho_5)\oplus V_{16}(\rho_5)$
and thus is an involution on $E(\rho_5)$. To confirm whether $\alpha$ fixes $E(\rho_5)$ pointwise we simply need to check whether it fixes a third point on $E(\rho_5)$.
We check that $(k_1+l_1,k_2+l_2,k_3+l_3,k_4+l_4,k_5-l_5)$ is an $H$-module in \texttt{FixedLocus.m2} and we confirm that $A$ does not fix $E(\rho_5)$ pointwise:
\[
  \alpha((k_1+l_1,k_2+l_2,k_3+l_3,k_4+l_4,k_5-l_5))\neq(k_1+l_1,k_2+l_2,k_3+l_3,k_4+l_4,k_5-l_5).
\]
\goodbreak

\section{McKay Quivers for $G$ and $H = G \cap \SL(2,\bC)$}\label{appendixb}

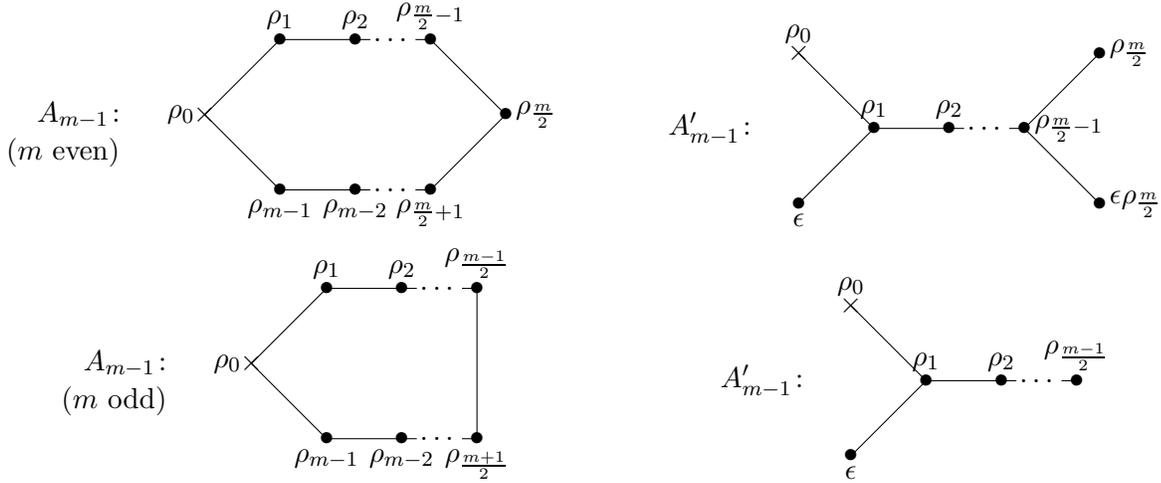
\begin{figure}[h!]
  \begin{subfigure}[t]{0.49\textwidth}
    \centering
    \begin{tikzpicture}
      \node at (-2,0) {\llap{$A_{m-1}\colon$}};
      \node at (-2,-0.5) {\llap{($m$ even) }};
      \node at (-1,0) {$\times$};
      \node at (0,1) {$\bullet$};
      \node at (0,-1) {$\bullet$};
      \node at (1,1) {$\bullet$};
      \node at (1,-1) {$\bullet$};
      \node at (2,1) {$\bullet$};
      \node at (2,-1) {$\bullet$};
      \node at (3,0) {$\bullet$};
      \draw (0,-1)--(-1,0)--(0,1);
      \draw (0,1)--(1,1);
      \draw (0,-1)--(1,-1);
      \draw (2,1)--(3,0)--(2,-1);
      \draw (1,1)--(1.2,1);
      \draw (2,1)--(1.8,1);
      \node at (1.5,1) {$\dots$};
      \draw (1,-1)--(1.2,-1);
      \draw (2,-1)--(1.8,-1);
      \node at (1.5,-1) {$\dots$};
      \node[left] at (-1,0) {$\rho_0$};
      \node[above] at (0,1) {$\rho_1$};
      \node[below] at (0,-1) {$\rho_{m-1}$};
      \node[above] at (1,1) {$\rho_2$};
      \node[below] at (1,-1) {$\rho_{m-2}$};
      \node[above] at (2,1) {$\rho_{\frac{m}{2}-1}$};
      \node[below] at (2,-1) {$\rho_{\frac{m}{2}+1}$};
      \node[right] at (3,0) {$\rho_{\frac{m}{2}}$};
    \end{tikzpicture}
  \end{subfigure}
  \begin{subfigure}[t]{0.49\textwidth}
    \centering
    \begin{tikzpicture}
      \node at (6.5,0) {\llap{$A_{m-1}'\colon$}};
      \node at (7,1) {$\times$};
      \node at (7,-1) {$\bullet$};
      \node at (8,0) {$\bullet$};
      \node at (9,0) {$\bullet$};
      \node at (10,0) {$\bullet$};
      \node at (11,1) {$\bullet$};
      \node at (11,-1) {$\bullet$};
      \draw (7,-1)--(8,0)--(7,1);
      \draw (8,0)--(9,0);
      \draw (11,-1)--(10,0)--(11,1);
      \draw (9,0)--(9.2,0);
      \draw (10,0)--(9.8,0);
      \node at (9.5,0) {$\dots$};
      \node[above] at (7,1) {$\rho_0$};
      \node[below] at (7,-1) {$\ep$};
      \node[above] at (8,0) {$\rho_1$};
      \node[above] at (9,0) {$\rho_2$};
      \node[right] at (10,0) {$\rho_{\frac{m}{2}-1}$};
      \node[right] at (11,1) {$\rho_{\frac{m}{2}}$};
      \node[right] at (11,-1) {$\ep\rho_{\frac{m}{2}}$};
    \end{tikzpicture}
  \end{subfigure}

  \begin{subfigure}[t]{0.49\textwidth}
    \centering
    \begin{tikzpicture}
      \node at (-2,0) {\llap{$A_{m-1}\colon$}};
      \node at (-2,-0.5) {\llap{($m$ odd) }};
      \node at (-1,0) {$\times$};
      \node at (0,1) {$\bullet$};
      \node at (0,-1) {$\bullet$};
      \node at (1,1) {$\bullet$};
      \node at (1,-1) {$\bullet$};
      \node at (2,1) {$\bullet$};
      \node at (2,-1) {$\bullet$};
      \draw (0,-1)--(-1,0)--(0,1);
      \draw (0,1)--(1,1);
      \draw (0,-1)--(1,-1);
      \draw (2,1)--(2,-1);
      \draw (1,1)--(1.2,1);
      \draw (2,1)--(1.8,1);
      \node at (1.5,1) {$\dots$};
      \draw (1,-1)--(1.2,-1);
      \draw (2,-1)--(1.8,-1);
      \node at (1.5,-1) {$\dots$};
      \node[left] at (-1,0) {$\rho_0$};
      \node[above] at (0,1) {$\rho_1$};
      \node[below] at (0,-1) {$\rho_{m-1}$};
      \node[above] at (1,1) {$\rho_2$};
      \node[below] at (1,-1) {$\rho_{m-2}$};
      \node[above] at (2,1) {$\rho_{\frac{m-1}{2}}$};
      \node[below] at (2,-1) {$\rho_{\frac{m+1}{2}}$};
    \end{tikzpicture}
  \end{subfigure}
  \begin{subfigure}[t]{0.49\textwidth}
    \centering
    \begin{tikzpicture}
      \node at (6.5,0) {\llap{$A_{m-1}'\colon$}};
      \node at (7,1) {$\times$};
      \node at (7,-1) {$\bullet$};
      \node at (8,0) {$\bullet$};
      \node at (9,0) {$\bullet$};
      \node at (10,0) {$\bullet$};
      \draw (7,-1)--(8,0)--(7,1);
      \draw (8,0)--(9,0);
      \draw (9,0)--(9.2,0);
      \draw (10,0)--(9.8,0);
      \node at (9.5,0) {$\dots$};
      \node[above] at (7,1) {$\rho_0$};
      \node[below] at (7,-1) {$\ep$};
      \node[above] at (8,0) {$\rho_1$};
      \node[above] at (9,0) {$\rho_2$};
      \node[above] at (10,0) {$\rho_{\frac{m-1}{2}}$};
    \end{tikzpicture}
  \end{subfigure}
  \caption{$A_{m-1}$ and $A_{m-1}'$ Dynkin diagrams}\label{An.dynkin}
  \end{figure}

  \begin{figure}[h!]
    \begin{subfigure}[t]{0.99\textwidth}
      \centering
      \begin{tikzpicture}
        \node at (-2,0) {$D_{m+2}\colon$};
        \node at (-1,1) {$\bullet$};
        \node at (-1,-1) {$\bullet$};
        \node at (0,0) {$\bullet$};
        \node at (1,0) {$\bullet$};
        \node at (2,0) {$\bullet$};
        \node at (3,0) {$\bullet$};
        \node at (4,0) {$\bullet$};
        \node at (3,1) {$\times$};
        \node at (1.5,0) {$\dots$};
        \draw (-1,1)--(0,0);
        \draw (-1,-1)--(0,0);
        \draw (0,0)--(1,0);
        \draw (2,0)--(3,0);
        \draw (3,0)--(4,0);
        \draw (3,0)--(3,1);
        \node[right] at (-1,-1) {$\rho_{m+2}'$};
        \node[right] at (-1,1) {$\rho_{m+1}'$};
        \node[below] at (0,0) {$\rho_m$};
        \node[below] at (1,0) {$\rho_{m-1}$};
        \node[below] at (2,0) {$\rho_3$};
        \node[below] at (3,0) {$\rho_2$};
        \node[below] at (4,0) {$\rho_1'$};
        \node[right] at (3,1) {$\rho_0$};
      \end{tikzpicture}
    \end{subfigure}
  
  \begin{subfigure}[t]{0.49\textwidth}
    \centering 
    \begin{tikzpicture}
      \node at (-2,1.5) {\llap{$D_{m+2}'\colon$}};
      \node at (-2,1) {\llap{($m$ even) }};  
      \node at (-2,0) {$\bullet$};
      \node at (0,1) {$\bullet$};
      \node at (0,-1) {$\bullet$};
      \node at (1,1) {$\bullet$};
      \node at (1,-1) {$\bullet$};
      \node at (2,1) {$\bullet$};
      \node at (2,-1) {$\bullet$};
      \node at (3,1) {$\bullet$};
      \node at (3,-1) {$\bullet$};
      \node at (4,1) {$\bullet$};
      \node at (4,-1) {$\bullet$};
      \node at (-1.33,0) {$\bullet$};
      \node at (-0.67,0) {$\bullet$};
      \node at (0,0) {$\bullet$};
      \node at (2.75,0) {$\bullet$};
      \node at (3.25,0) {$\times$};
      \draw[->-] (0,1)--(-2,0);
      \draw[->-] (-2,0)--(0,-1);
      \draw[->-] (1,1)--(0,1);
      \draw[->-] (1,-1)--(0,-1);
      \draw[->-] (3,1)--(2,1);
      \draw[->-] (3,-1)--(2,-1);
      \draw[->-] (-1.33,0)--(0,1);
      \draw[->-] (0,-1)--(-1.33,0);
      \draw[->-] (-0.67,0)--(0,1);
      \draw[->-] (0,-1)--(-0.67,0);
      \draw[->-] (3,1)--(2.75,0);
      \draw[->-] (2.75,0)--(3,-1);
      \draw[->-] (3.25,0)--(3,1);
      \draw[->-] (3,-1)--(3.25,0);
      \draw[->-] (0,1)--(0,0);
      \draw[->-] (0,0)--(0,-1);
      \draw[->--] (0,-1)--(1,1);
      \draw[->--] (0,1)--(1,-1);
      \draw[->--] (2,-1)--(3,1);
      \draw[->--] (2,1)--(3,-1);
      \draw[->--] (3,-1)--(4,1);
      \draw[->--] (3,1)--(4,-1);
      \draw[->-] (4,-1)--(3,-1);
      \draw[->-] (4,1)--(3,1);
      \draw (2,1)--(1.8,1);
      \draw (1.2,1)--(1,1);
      \node at (1.5,1) {$\dots$};
      \draw (2,-1)--(1.8,-1);
      \draw (1.2,-1)--(1,-1);
      \node at (1.5,-1) {$\dots$};
      \draw[->-] (1,1)--(1.4,0.2);
      \draw[->-] (1,-1)--(1.4,-0.2);
      \draw[->-] (1.6,0.2)--(2,1);
      \draw[->-] (1.6,-0.2)--(2,-1);
      \node at (1.5,0) {$\dots$};
      \node[left] at (-2,0) {$\ep\rho_{m+2}$};
      \node[above] at (-1.33,0) {$\rho_{m+2}$};
      \node[below] at (-0.67,0) {$\rho_{m+1}$};
      \node[above] at (0,0) {$\,\,\,\ep\rho_{m+1}$};
      \node[above] at (2.75,0) {$\ep$};
      \node[below left] at (3.25,0) {$\rho_0\!\!\!$};
      \node[above] at (0,1) {$\rho_m$};
      \node[below] at (0,-1) {$\ep\rho_m$};
      \node[above] at (1,1) {$\rho_{m-1}$};
      \node[below] at (1,-1) {$\ep\rho_{m-1}$};
      \node[above] at (2,1) {$\rho_3$};
      \node[below] at (2,-1) {$\ep\rho_3$};
      \node[above] at (3,1) {$\rho_2$};
      \node[below] at (3,-1) {$\ep\rho_2$};
      \node[above] at (4,1) {$\rho_1'$};
      \node[below] at (4,-1) {$\ep\rho_1'$};
    \end{tikzpicture}
  \end{subfigure}
\vspace*{0.2cm}  
  \begin{subfigure}[t]{0.49\textwidth}
    \centering 
    \begin{tikzpicture}
      \node at (7,1.5) {\llap{$D_{m+2}'\colon$}};
      \node at (7,1) {\llap{($m$ odd) }};
      \node at (7,0) {$\bullet$};
      \node at (8,1) {$\bullet$};
      \node at (8,-1) {$\bullet$};
      \node at (9,1) {$\bullet$};
      \node at (9,-1) {$\bullet$};
      \node at (10,1) {$\bullet$};
      \node at (10,-1) {$\bullet$};
      \node at (11,1) {$\bullet$};
      \node at (11,-1) {$\bullet$};
      \node at (12,1) {$\bullet$};
      \node at (12,-1) {$\bullet$};
      \node at (10.75,0) {$\bullet$};
      \node at (11.25,0) {$\times$};
      \draw (7,0)--(8,1);
      \draw (7,0)--(8,-1);
      \draw[->-] (9,1)--(8,1);
      \draw[->-] (9,-1)--(8,-1);
      \draw[->-] (11,1)--(10,1);
      \draw[->-] (11,-1)--(10,-1);
      \draw[->-] (11,1)--(10.75,0);
      \draw[->-] (10.75,0)--(11,-1);
      \draw[->-] (11.25,0)--(11,1);
      \draw[->-] (11,-1)--(11.25,0);
      \draw[->--] (8,-1)--(9,1);
      \draw[->--] (8,1)--(9,-1);
      \draw[->--] (10,-1)--(11,1);
      \draw[->--] (10,1)--(11,-1);
      \draw[->--] (11,-1)--(12,1);
      \draw[->--] (11,1)--(12,-1);
      \draw[->-] (12,-1)--(11,-1);
      \draw[->-] (12,1)--(11,1);
      \node[left] at (7,0) {$\rho_{m+1}$};
      \node[above] at (8,1) {$\rho_m$};
      \node[below] at (8,-1) {$\ep\rho_m$};
      \node[above] at (9,1) {$\rho_{m-1}$};
      \node[below] at (9,-1) {$\ep\rho_{m-1}$};
      \draw[->-] (9,1)--(9.4,0.2);
      \draw[->-] (9,-1)--(9.4,-0.2);
      \draw[->-] (9.6,0.2)--(10,1);
      \draw[->-] (9.6,-0.2)--(10,-1);
      \node at (9.5,1) {$\dots$};
      \node at (9.5,0) {$\dots$};
      \draw (9,1)--(9.2,1);
      \draw (10,1)--(9.8,1);
      \node at (9.5,-1) {$\dots$};
      \draw (9,-1)--(9.2,-1);
      \draw (10,-1)--(9.8,-1);
      \node[above] at (10,1) {$\rho_3$};
      \node[below] at (10,-1) {$\ep\rho_3$};
      \node[above] at (11,1) {$\rho_2$};
      \node[below] at (11,-1) {$\ep\rho_2$};
      \node[above] at (12,1) {$\rho_1'$};
      \node[below] at (12,-1) {$\ep\rho_1'$};
      \node[above] at (10.75,0) {$\ep$};
      \node[below] at (11.25,0) {$\rho_0$};
      \end{tikzpicture}
  \end{subfigure}
  \caption{$D_{m+2}$ and $D_{m+2}'$ Dynkin diagrams}\label{Dn.dynkin}
\end{figure}
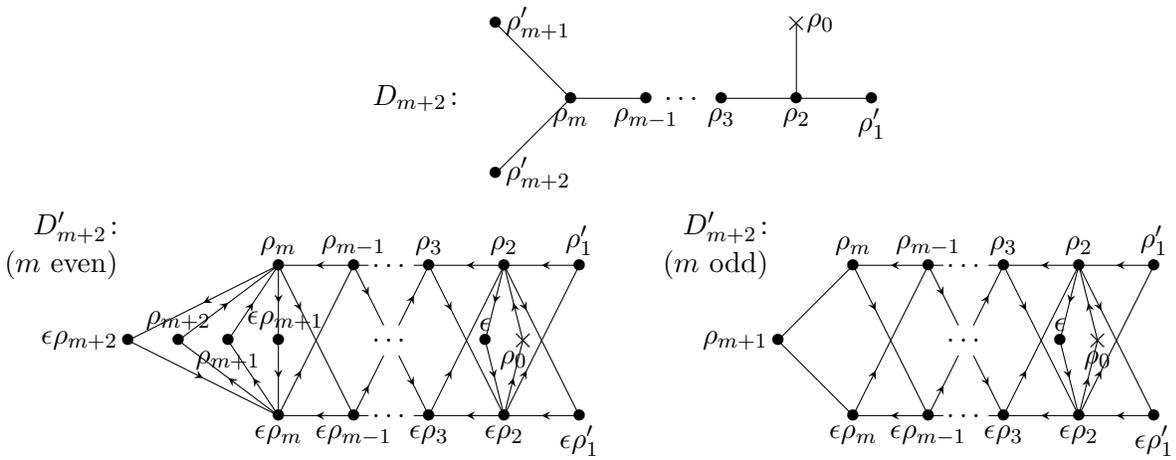

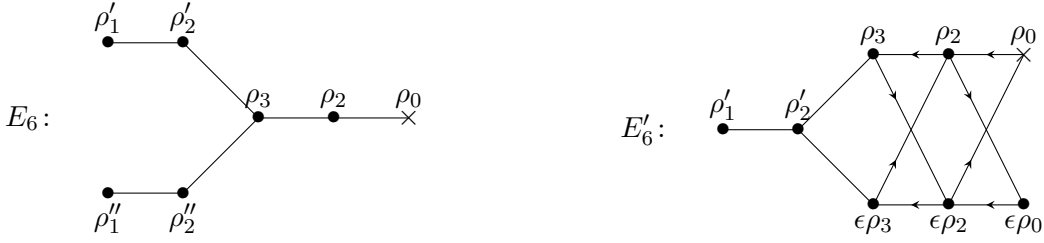
\begin{figure}[h!]
\vspace*{0.2cm}
    \begin{subfigure}[t]{0.49\textwidth}
      \centering
      \begin{tikzpicture}
        \node at (-3,0) {$E_6\colon$};
        \node at (-2,1) {$\bullet$};
        \node at (-2,-1) {$\bullet$};
        \node at (-1,1) {$\bullet$};
        \node at (-1,-1) {$\bullet$};
        \node at (0,0) {$\bullet$};
        \node at (1,0) {$\bullet$};
        \node at (2,0) {$\times$};
        \draw (-2,1)--(-1,1);
        \draw (-2,-1)--(-1,-1);
        \draw (-1,1)--(0,0);
        \draw (-1,-1)--(0,0);
        \draw (0,0)--(1,0);
        \draw (1,0)--(2,0);
        \node[above] at (-2,1) {$\rho_1'$};
        \node[below] at (-2,-1) {$\rho_1''$};
        \node[above] at (-1,1) {$\rho_2'$};
        \node[below] at (-1,-1) {$\rho_2''$};
        \node[above] at (0,0) {$\rho_3$};
        \node[above] at (1,0) {$\rho_2$};
        \node[above] at (2,0) {$\rho_0$};
      \end{tikzpicture}
    \end{subfigure}
    \begin{subfigure}[t]{0.49\textwidth}
      \centering
      \begin{tikzpicture}
        \node at (4,0) {$E_6'\colon$};
        \node at (5,0) {$\bullet$};
        \node at (6,0) {$\bullet$};
        \node at (7,1) {$\bullet$};
        \node at (7,-1) {$\bullet$};
        \node at (8,1) {$\bullet$};
        \node at (8,-1) {$\bullet$};
        \node at (9,1) {$\times$};
        \node at (9,-1) {$\bullet$};
        \draw(5,0)--(6,0);
        \draw (6,0)--(7,1);
        \draw (6,0)--(7,-1);
        \draw[->-] (8,1)--(7,1);
        \draw[->--] (7,-1)--(8,1);
        \draw[->--] (7,1)--(8,-1);
        \draw[->--] (8,-1)--(9,1);
        \draw[->--] (8,1)--(9,-1);
        \draw[->-] (8,-1)--(7,-1);
        \draw[->-] (9,1)--(8,1);
        \draw[->-] (9,-1)--(8,-1);
        \node[above] at (5,0) {$\rho_1'$};
        \node[above] at (6,0) {$\rho_2'$};
        \node[above] at (7,1) {$\rho_3$};
        \node[below] at (7,-1) {$\ep\rho_3$};
        \node[above] at (8,1) {$\rho_2$};
        \node[below] at (8,-1) {$\ep\rho_2$};
        \node[above] at (9,1) {$\rho_0$};
        \node[below] at (9,-1) {$\ep\rho_0$};
      \end{tikzpicture}
    \end{subfigure}
  \caption{$E_6$ and $E_6'$ Dynkin diagrams}\label{E6.dynkin}
  \end{figure}
  
  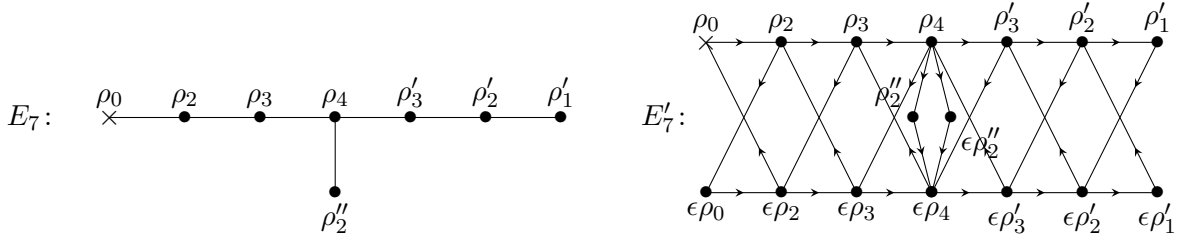
\begin{figure}[h!]
\vspace*{0.2cm}    
    \begin{subfigure}[t]{0.49\textwidth}
      \centering 
      \begin{tikzpicture}
        \node at (-2,0) {$E_7\colon$};
        \node at (-1,0) {$\times$};
        \node at (0,0) {$\bullet$};
        \node at (1,0) {$\bullet$};
        \node at (2,0) {$\bullet$};
        \node at (2,-1) {$\bullet$};
        \node at (3,0) {$\bullet$};
        \node at (4,0) {$\bullet$};
        \node at (5,0) {$\bullet$};
        \draw (-1,0)--(0,0);
        \draw (0,0)--(1,0);
        \draw (1,0)--(2,0);
        \draw (2,0)--(3,0);
        \draw (2,0)--(2,-1);
        \draw (3,0)--(4,0);
        \draw (4,0)--(5,0);
        \node[above] at (-1,0) {$\rho_0$};
        \node[above] at (0,0) {$\rho_2$};
        \node[above] at (1,0) {$\rho_3$};
        \node[above] at (2,0) {$\rho_4$};
        \node[below] at (2,-1) {$\rho_2''$};
        \node[above] at (3,0) {$\rho_3'$};
        \node[above] at (4,0) {$\rho_2'$};
        \node[above] at (5,0) {$\rho_1'$};
      \end{tikzpicture}
    \end{subfigure}
    \begin{subfigure}[t]{0.49\textwidth}
      \centering 
      \begin{tikzpicture}
        \node at (6.5,0) {$E_7'\colon$};
        \node at (7,1) {$\times$};
        \node at (8,1) {$\bullet$};
        \node at (9,1) {$\bullet$};
        \node at (10,1) {$\bullet$};
        \node at (10,-1) {$\bullet$};
        \node at (11,1) {$\bullet$};
        \node at (12,1) {$\bullet$};
        \node at (13,1) {$\bullet$};
        \node at (7,-1) {$\bullet$};
        \node at (8,-1) {$\bullet$};
        \node at (9,-1) {$\bullet$};
        \node at (10,-1) {$\bullet$};
        \node at (11,-1) {$\bullet$};
        \node at (12,-1) {$\bullet$};
        \node at (13,-1) {$\bullet$};
        \draw[->-] (7,1)--(8,1);
        \draw[->--] (8,1)--(7,-1);
        \draw[->-] (8,1)--(9,1);
        \draw[->--] (9,1)--(8,-1);
        \draw[->-] (9,1)--(10,1);
        \draw[->-] (10,1)--(11,1);
        \draw[->--] (10,1)--(9,-1);
        \draw[->--] (11,1)--(10,-1);
        \draw[->-] (11,1)--(12,1);
        \draw[->--] (12,1)--(11,-1);
        \draw[->-] (12,1)--(13,1);
        \draw[->--] (13,1)--(12,-1);
        \draw[->-] (7,-1)--(8,-1);
        \draw[->-] (8,-1)--(9,-1);
        \draw[->--] (8,-1)--(7,1);
        \draw[->-] (9,-1)--(10,-1);
        \draw[->--] (9,-1)--(8,1);
        \draw[->-] (10,-1)--(11,-1);
        \draw[->--] (10,-1)--(9,1);
        \draw[->-] (11,-1)--(12,-1);
        \draw[->--] (11,-1)--(10,1);
        \draw[->-] (12,-1)--(13,-1);
        \draw[->--] (12,-1)--(11,1);
        \draw[->--] (13,-1)--(12,1);
        \node[above] at (7,1) {$\rho_0$};
        \node[above] at (8,1) {$\rho_2$};
        \node[above] at (9,1) {$\rho_3$};
        \node[above] at (10,1) {$\rho_4$};
        \node[above] at (11,1) {$\rho_3'$};
        \node[above] at (12,1) {$\rho_2'$};
        \node[above] at (13,1) {$\rho_1'$};
        \node[below] at (7,-1) {$\ep\rho_0$};
        \node[below] at (8,-1) {$\ep\rho_2$};
        \node[below] at (9,-1) {$\ep\rho_3$};
        \node[below] at (10,-1) {$\ep\rho_4$};
        \node[below] at (11,-1) {$\ep\rho_3'$};
        \node[below] at (12,-1) {$\ep\rho_2'$};
        \node[below] at (13,-1) {$\ep\rho_1'$};
        \node at (9.75,0) {$\bullet$};
        \node at (10.25,0) {$\bullet$};
        \node[above left] at (9.75,0) {$\rho_2''$};
        \node[below right] at (10.25,0) {$\ep\rho_2''$};
        \draw[->-] (10,1)--(9.75,0);
        \draw[->-] (10,1)--(10.25,0);
        \draw[->-] (9.75,0)--(10,-1);
        \draw[->-] (10.25,0)--(10,-1);
      \end{tikzpicture}
    \end{subfigure}
    \caption{$E_7$ and $E_7'$ Dynkin diagrams}\label{E7.dynkin}
  \end{figure}
  
  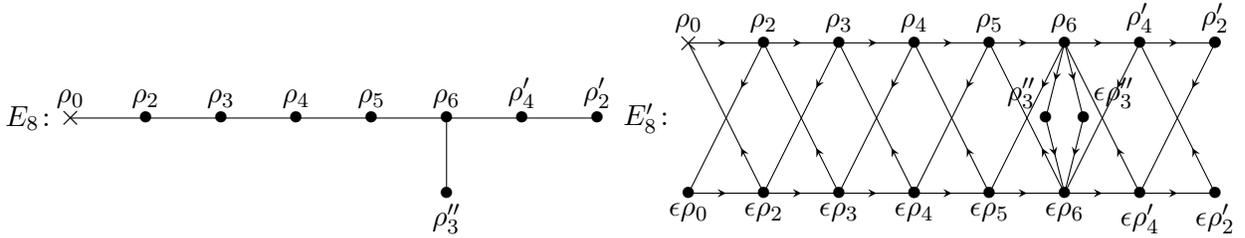
\begin{figure}[h!]
\vspace*{0.2cm}    
    \begin{subfigure}[t]{0.49\textwidth}
      \centering
      \begin{tikzpicture}
        \node at (-1.5,0) {$E_8\colon$};
        \node at (-1,0) {$\times$};
        \node at (0,0) {$\bullet$};
        \node at (1,0) {$\bullet$};
        \node at (2,0) {$\bullet$};
        \node at (4,-1) {$\bullet$};
        \node at (3,0) {$\bullet$};
        \node at (4,0) {$\bullet$};
        \node at (5,0) {$\bullet$};
        \node at (6,0) {$\bullet$};
        \draw (-1,0)--(0,0);
        \draw (0,0)--(1,0);
        \draw (1,0)--(2,0);
        \draw (2,0)--(3,0);
        \draw (4,0)--(4,-1);
        \draw (3,0)--(4,0);
        \draw (4,0)--(5,0);
        \draw (5,0)--(6,0);
        \node[above] at (-1,0) {$\rho_0$};
        \node[above] at (0,0) {$\rho_2$};
        \node[above] at (1,0) {$\rho_3$};
        \node[above] at (2,0) {$\rho_4$};
        \node[above] at (3,0) {$\rho_5$};
        \node[above] at (4,0) {$\rho_6$};
        \node[above] at (5,0) {$\rho_4'$};
        \node[above] at (6,0) {$\rho_2'$};
        \node[below] at (4,-1) {$\rho_3''$};
        \end{tikzpicture}
    \end{subfigure}
    \begin{subfigure}[t]{0.49\textwidth}
      \centering
      \begin{tikzpicture}  
        \node at (6.5,0) {$E_8'\colon$};
        \node at (7,1) {$\times$};
        \node at (8,1) {$\bullet$};
        \node at (9,1) {$\bullet$};
        \node at (10,1) {$\bullet$};
        \node at (10,-1) {$\bullet$};
        \node at (11,1) {$\bullet$};
        \node at (12,1) {$\bullet$};
        \node at (13,1) {$\bullet$};
        \node at (14,1) {$\bullet$};
        \node at (7,-1) {$\bullet$};
        \node at (8,-1) {$\bullet$};
        \node at (9,-1) {$\bullet$};
        \node at (10,-1) {$\bullet$};
        \node at (11,-1) {$\bullet$};
        \node at (12,-1) {$\bullet$};
        \node at (13,-1) {$\bullet$};
        \node at (14,1) {$\bullet$};
        \node at (14,-1) {$\bullet$};
        \draw[->-] (7,1)--(8,1);
        \draw[->--] (8,1)--(7,-1);
        \draw[->-] (8,1)--(9,1);
        \draw[->--] (9,1)--(8,-1);
        \draw[->-] (9,1)--(10,1);
        \draw[->-] (10,1)--(11,1);
        \draw[->--] (10,1)--(9,-1);
        \draw[->--] (11,1)--(10,-1);
        \draw[->-] (11,1)--(12,1);
        \draw[->--] (12,1)--(11,-1);
        \draw[->-] (12,1)--(13,1);
        \draw[->-] (13,1)--(14,1);
        \draw[->-] (13,-1)--(14,-1);
        \draw[->--] (13,1)--(12,-1);
        \draw[->--] (14,1)--(13,-1);
        \draw[->-] (7,-1)--(8,-1);
        \draw[->-] (8,-1)--(9,-1);
        \draw[->--] (8,-1)--(7,1);
        \draw[->-] (9,-1)--(10,-1);
        \draw[->--] (9,-1)--(8,1);
        \draw[->-] (10,-1)--(11,-1);
        \draw[->--] (10,-1)--(9,1);
        \draw[->-] (11,-1)--(12,-1);
        \draw[->--] (11,-1)--(10,1);
        \draw[->-] (12,-1)--(13,-1);
        \draw[->--] (12,-1)--(11,1);
        \draw[->--] (13,-1)--(12,1);
        \draw[->--] (14,-1)--(13,1);
        \node[above] at (7,1) {$\rho_0$};
        \node[above] at (8,1) {$\rho_2$};
        \node[above] at (9,1) {$\rho_3$};
        \node[above] at (10,1) {$\rho_4$};
        \node[above] at (11,1) {$\rho_5$};
        \node[above] at (12,1) {$\rho_6$};
        \node[above] at (13,1) {$\rho_4'$};
        \node[above] at (14,1) {$\rho_2'$};
        \node[below] at (7,-1) {$\ep\rho_0$};
        \node[below] at (8,-1) {$\ep\rho_2$};
        \node[below] at (9,-1) {$\ep\rho_3$};
        \node[below] at (10,-1) {$\ep\rho_4$};
        \node[below] at (11,-1) {$\ep\rho_5$};
        \node[below] at (12,-1) {$\ep\rho_6$};
        \node[below] at (13,-1) {$\ep\rho_4'$};
        \node[below] at (14,-1) {$\ep\rho_2'$};
        \node at (11.75,0) {$\bullet$};
        \node at (12.25,0) {$\bullet$};
        \node[above left] at (11.75,0) {$\rho_3''$};
        \node[above right] at (12.25,0) {$\ep\rho_3''$};
        \draw[->-] (12,1)--(11.75,0);
        \draw[->-] (12,1)--(12.25,0);
        \draw[->-] (11.75,0)--(12,-1);
        \draw[->-] (12.25,0)--(12,-1);
        \end{tikzpicture}
    \end{subfigure}
  \caption{$E_8$ and $E_8'$ Dynkin diagrams}\label{E8.dynkin}
  \end{figure}

\pagebreak

\bibliographystyle{alpha}
\bibliography{nccr}

\end{document}